\newtheorem{prop}{Proposition}[section]
\newtheorem{cor}[prop]{Corollary}
\newtheorem{thm}[prop]{Theorem}
\newtheorem{lem}[prop]{Lemma}
\theoremstyle{definition}
\newtheorem{definition}[prop]{Definition}
\newtheorem{example}[prop]{Example}
\newtheorem{rem}[prop]{Remark}
\def\Core{{\rm Core}}
\def\Irr{{\rm Irr}}
\def\GL{\mathrm{GL}}
\def\PSL{\mathrm{PSL}}
\def\PGL{\mathrm{PGL}}
\def\PSU{\mathrm{PSU}}
\def\PSp{\mathrm{PSp}}
\def\POmega{\mathrm{P\Omega}}
\def\SL{\mathrm{SL}}
\def\Sp{\mathrm{Sp}}
\def\SU{\mathrm{SU}}
\def\Sz{\mathrm{Sz}}
\def\AGL{\mathrm{AGL}}
\def\spl{\!:\!}
\begin{document}

\title{On Frobenius graphs of diameter $3$ \\
for finite groups}
\author{T. Breuer, L. H\'ethelyi, E. Horv\'ath and B. K\"ulshammer}

%\date{\today}
\date{July 10th, 2024}

%%%%%%%%%%%%%%%%%%%%%%%%%%%%%%%%%%%%%%%%%%%%%%%%%%%%%%%%%%%%%%%%%%%%%%%%%%%%%
\maketitle

%%%%%%%%%%%%%%%%%%%%%%%%%%%%%%%%%%%%%%%%%%%%%%%%%%%%%%%%%%%%%%%%%%%%%%%%%%%%%
\abstract{%
For a subgroup $H$ of a finite group $G$,
the Frobenius graph $\Gamma(G, H)$ records the constituents
of the restrictions to $H$ of the irreducible characters of $G$.
We investigate when this graph has diameter $3$.}

%%%%%%%%%%%%%%%%%%%%%%%%%%%%%%%%%%%%%%%%%%%%%%%%%%%%%%%%%%%%%%%%%%%%%%%%%%%%%
\bigskip
\noindent
\emph{In memory of our friend and colleague Erzs\'ebet Horv\'ath,
who sadly passed away during the preparation of this paper.}

%%%%%%%%%%%%%%%%%%%%%%%%%%%%%%%%%%%%%%%%%%%%%%%%%%%%%%%%%%%%%%%%%%%%%%%%%%%%%
\section{Introduction}\label{sect:intro}

% definitions

Let $H$ be a subgroup of a finite group $G$.
The \emph{Frobenius graph} $\Gamma(G, H)$ is the bipartite graph
with vertex set the disjoint union of $\Irr(G)$ and $\Irr(H)$
and an edge between $\chi \in \Irr(G)$ and $\varphi \in \Irr(H)$
whenever $[\chi_H, \varphi] \neq 0$.
Here $[\alpha, \beta] = [\alpha, \beta]_H$ is the inner product
of (virtual) characters $\alpha, \beta$ of $H$,
and $\chi_H$ denotes the restriction of a (virtual) character $\chi$ of $G$
to $H$.
We also define the \emph{Frobenius matrix}
\[
   F(G, H):= ([\chi_H, \varphi])_{\varphi \in \Irr(H), \chi \in \Irr(G)}.
\]
Then the biadjacency matrix of $\Gamma(G, H)$ is obtained from $F(G, H)$
by replacing each nonzero entry by $1$.

We will mainly be interested in situations where
the \emph{diameter} $d$ of $\Gamma(G, H)$,
i.~e., the largest distance of two vertices in $\Gamma(G, H)$, is finite.
We show that $\Gamma(G, H)$ is connected if and only if the
core of $H$ in $G$ is trivial.
We note:

\begin{itemize}
\item
    $d = 1$ if and only if $|G| = 1$ holds,
\item
    $d = 2$ if and only if $|G| > 1$ and $|H| = 1$ hold.
\end{itemize}

Thus $d \geq 3$ holds if and only if $H$ is nontrivial.
Observe that for $1_H \neq \varphi \in \Irr(H)$,
any path from $1_G$ to $\varphi$ in $\Gamma(G, H)$ has odd length $> 1$.

Frobenius graphs can have arbitrarily large diameters,
for example the diameter of $\Gamma(S_{n+1}, S_n)$ is $2n$,
where $S_n$ is the symmetric group of degree $n$.
This can be proved in terms of partitions of $n$,
which parametrize the irreducible characters of $S_n$, as follows:
Any partition of $n$ can be transformed into any other partition of $n$
by a sequence of at most $n-1$ steps, where one step consists of adding an
addable node in order to get a partition of $n+1$,
and then removing a removable node.
Thus one gets a path of length at most $2n-2$ between any two characters
of $S_n$ in $\Gamma(S_{n+1}, S_n)$.
Then it is clear that paths between characters of $S_{n+1}$
have length at most $2n$, and that the path between the trivial and the
alternating character of $S_{n+1}$ has length exactly $2n$.

In the following,
we will investigate Frobenius graphs $\Gamma(G, H)$ of diameter $3$.
In this case we call $H$ a \emph{diameter three subgroup} of $G$.

% overview of sections:

Section~\ref{sect:struct} collects properties of pairs $(G, H)$
such that $H$ is a diameter three subgroup of $G$,
Section~\ref{sect:examples} shows examples,
Section~\ref{sect:large} studies large subgroups that are
diameter three subgroups,
Section~\ref{sect:quasisimple} classifies those quasisimple groups that
contain diameter three subgroups,
Section~\ref{sect:minimal} deals with the question how minimal groups
(w.~r.~t.~inclusion) look like which have a diameter three subgroup.
Finally, Section~\ref{sect:depth} explains some connections between
the diameter of a Frobenius graph $\Gamma(G, H)$
and the subgroup depth of $H$ in $G$.

The direct computations for this paper were done with the help of the
computer algebra system Oscar~\cite{OSCAR}.

%%%%%%%%%%%%%%%%%%%%%%%%%%%%%%%%%%%%%%%%%%%%%%%%%%%%%%%%%%%%%%%%%%%%%%%%%%%%%
\section{Structural Properties}\label{sect:struct}

The following result is essentially contained in~\cite[Section~6]{BKK}.
We include a proof for the convenience of the reader.

\begin{prop}\label{corefree}
Let $H$ be a proper subgroup of a finite group $G$.
Then the connected components of $\Gamma(G, H)$ are in bijection with
the $G$-orbits on $\Irr(K)$
where $K := \Core_G(H) := \bigcap_{g \in G} gHg^{-1}$ denotes
the core of $H$ in $G$.
In particular, $\Gamma(G, H)$ is connected if and only if $K = 1$.
\end{prop}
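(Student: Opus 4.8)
The plan is to exhibit an explicit map from the vertex set of $\Gamma(G, H)$ to the set $\Omega$ of $G$-orbits on $\Irr(K)$, to show that it is constant on connected components and surjective, and finally that each of its fibres is connected; the bijection and the closing ``in particular'' then follow at once. Since $K = \Core_G(H)$ is normal in $G$ and contained in $H$, Clifford's theorem applies both to $G$ and to $H$: for $\chi \in \Irr(G)$ the constituents of $\chi_K$ form a single $G$-orbit $\Phi(\chi) \in \Omega$, and for $\varphi \in \Irr(H)$ the constituents of $\varphi_K$ form a single $H$-orbit, contained in a unique $G$-orbit $\Phi(\varphi) \in \Omega$. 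If $\chi$ and $\varphi$ are adjacent, then $\varphi$ is a constituent of $\chi_H$, so $\varphi_K$ is a constituent of $(\chi_H)_K = \chi_K$ and hence $\Phi(\varphi) = \Phi(\chi)$; thus $\Phi$ is constant on components. Surjectivity is immediate, since every $\theta \in \Irr(K)$ lies under some $\chi \in \Irr(G)$, namely any constituent of $\theta^G$.

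It remains to prove that the fibre over a fixed orbit $\mathcal{O} \in \Omega$ is connected; this is the heart of the argument. First I would dispose of the $\Irr(H)$-vertices: if $\varphi$ lies in the fibre, pick any constituent $\chi$ of $\varphi^G$; by Frobenius reciprocity $[\chi_H, \varphi]_H = [\chi, \varphi^G] \neq 0$, so $\varphi$ is adjacent to $\chi$, and $\chi_K \supseteq \varphi_K$ forces $\Phi(\chi) = \mathcal{O}$. Hence every $\Irr(H)$-vertex of the fibre is joined to an $\Irr(G)$-vertex of the fibre, and the problem reduces to showing that any two $\chi, \chi' \in \Irr(G)$ with $\Phi(\chi) = \Phi(\chi') = \mathcal{O}$ lie in the same component.

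For this crux I would pass to the permutation character $\pi := (1_H)^G$. Since $K$ acts trivially on the cosets $G/H$ we have $K \le \ker \pi$, so $\pi$ is a character of $G/K$, and its kernel as a representation is exactly $\Core_G(H) = K$; thus $\pi$ is \emph{faithful} as a character of $G/K$. Two characters $\chi, \chi' \in \Irr(G)$ share a common $\Irr(H)$-neighbour precisely when $[\chi_H, \chi'_H]_H \neq 0$, i.e.\ when $\chi$ is a constituent of $\chi' \pi$; consequently $\chi$ and $\chi'$ lie in the same component as soon as $\chi$ is a constituent of $\chi' \pi^{\,n}$ for some $n$ (peel off one factor $\pi$ at a time to read off the chain). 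By Brauer's classical theorem on faithful characters, the constituents of the powers $\pi^{\,n}$ exhaust $\Irr(G/K)$, so it suffices to produce $\beta \in \Irr(G/K)$ with $[\chi, \chi' \beta] \neq 0$, equivalently an irreducible constituent $\beta$ of $\chi \overline{\chi'}$ with $K \le \ker \beta$. Such a $\beta$ exists because $[\chi \overline{\chi'}, (1_K)^G] = [\chi_K, \chi'_K]_K$, and by Clifford's theorem $\chi_K = e \sum_{\psi \in \mathcal{O}} \psi$ and $\chi'_K = e' \sum_{\psi \in \mathcal{O}} \psi$ have inner product $e e' |\mathcal{O}| \neq 0$. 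I expect this last step---turning the common-orbit condition into a genuine path by means of the permutation character and Brauer's theorem---to be the main obstacle, everything else being bookkeeping with Clifford's theorem and Frobenius reciprocity. Finally, for the closing assertion: if $K = 1$ then $\Omega$ is a single orbit and $\Gamma(G, H)$ is connected, while if $K \neq 1$ the trivial character $1_K$ forms a $G$-orbit by itself and $\Irr(K)$ contains a further character, so $|\Omega| \ge 2$ and $\Gamma(G, H)$ is disconnected.
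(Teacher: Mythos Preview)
Your proof is correct and follows essentially the same route as the paper's: both arguments assign to each vertex the $G$-orbit on $\Irr(K)$ determined by Clifford theory, and both establish connectedness of each fibre by showing that $[\chi_K,\chi'_K]\neq 0$ yields a constituent of $\chi\overline{\chi'}$ with kernel containing $K$, then invoking the Burnside--Brauer theorem on powers of the faithful character $1_H^G$ of $G/K$ to produce a path via iterated multiplication by $\pi$. The only cosmetic differences are that you organise the argument explicitly around the map $\Phi$ and its fibres, and that you attribute the key character-theoretic fact to Brauer where the paper cites it as a theorem of Burnside (Huppert~I, Satz~V.10.8).
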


\begin{proof}
Let $\chi \in \Irr(G)$ and $\varphi \in \Irr(H)$ such that
$[\chi_H, \varphi] \neq 0$.
By Clifford theory, $\varphi_K$ is a sum of $H$-conjugates of a character
$\theta \in \Irr(K)$.
Thus $\chi_K$ is a sum of $G$-conjugates of $\theta$.
If also $\psi \in \Irr(G)$ satisfies $[\psi_H, \varphi] \neq 0$ then
$\psi_K$ is again a sum of $G$-conjugates of $\theta$.
Thus, whenever $\chi' \in \Irr(G)$ is contained in the same
connected component of $\Gamma(G, H)$ as $\chi$ then the
irreducible constituents of $\chi_K'$ form the $G$-orbit of $\theta$.

Conversely, let $\chi, \chi' \in \Irr(G)$ such that
$0 \neq [\chi_K, \chi_K'] = [1_K, \overline{\chi}_K\chi_K']
                          = [1_K^G, \overline{\chi}\chi']$.
Then $\overline{\chi}\chi'$ has a constituent $\eta$
whose kernel contains $K$.
On the other hand, the kernel of $1_H^G$ is $K$,
so that we can view $1_H^G$ as a faithful character of $G/K$.
By a theorem of Burnside~\cite[Satz~V.10.8]{HuppertI},
there is a positive integer $m$ such that
$0 < [(1_H^G)^m, \eta] \leq [(1_H^G)^m, \overline{\chi} \chi']
   = [\chi (1_H^G)^m, \chi']$.
Consider the linear map
$U: \mathbb{Z} \Irr(G) \longrightarrow \mathbb{Z} \Irr(G)$,
$\alpha \longmapsto \alpha_H^G = \alpha (1_H^G)$;
here $\mathbb{Z} \Irr(G)$ denotes the group of virtual characters of $G$.
Then $U^n(\alpha) = \alpha (1_H^G)^n$ for all $n>0$
which implies that $0 \neq [U^m(\chi), \chi']$.
Now note that the constituents of $U(\chi) = \chi_H^G$ are contained in
the connected component of $\chi$ in $\Gamma(G, H)$.
Thus also the constituents of $U^m(\chi)$ are contained in the
connected component of $\chi$ in $\Gamma(G, H)$.
In particular, $\chi'$ is contained in the connected component of $\chi$
in $\Gamma(G, H)$.

This proves the first assertion of our proposition.
Now suppose that $\Gamma(G, H)$ is connected.
Then $\Irr(K)$ is a single $G$-orbit, i.~e.,
$G$ acts transitively on $\Irr(K)$.
Thus the trivial character $1_K$ is the only irreducible character of $K$,
so that $K = 1$.
The converse is clear.
\end{proof}

By Proposition~\ref{corefree} and the remarks from Section~\ref{sect:intro},
a diameter three subgroup $H$ is always nontrivial
and \emph{core-free} in $G$, i.~e., $\Core_G(H) = 1$.
The following result will be our main tool in order to detect
diameter three subgroups.

\begin{prop}\label{diam3_equiv}
Let $H$ be a nontrivial proper subgroup of a finite group $G$.
Then the following assertions are equivalent:

\begin{itemize}
\item[(a)]
    $\Gamma(G, H)$ has diameter $3$.
\item[(b)]
    \begin{itemize}
    \item[(i)]
        For any $\chi \in \Irr(G)$, we have $[\chi_H, 1_H] \neq 0$.
    \item[(ii)]
        For any $\varphi, \psi \in \Irr(H)$,
        we have $[\varphi^G, \psi^G] \neq 0$.
  \end{itemize}
\end{itemize}
\end{prop}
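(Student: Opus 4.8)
The plan is to translate the two numerical conditions in (b) into purely graph-theoretic statements about $\Gamma(G,H)$ and then exploit the bipartite structure together with the fact, noted in Section~\ref{sect:intro}, that $d \geq 3$ already holds because $H$ is nontrivial. Two translations will drive everything. First, since $[\chi_H, 1_H] \neq 0$ means precisely that $\chi$ and $1_H$ are adjacent, condition (i) says exactly that every $\chi \in \Irr(G)$ is a neighbour of $1_H$. Second, Frobenius reciprocity gives $[\varphi^G, \psi^G] = \sum_{\chi \in \Irr(G)} [\chi_H, \varphi]\,[\chi_H, \psi]$, a sum of nonnegative integers; hence $[\varphi^G, \psi^G] \neq 0$ if and only if $\varphi$ and $\psi$ admit a common neighbour in $\Irr(G)$, so condition (ii) says exactly that any two vertices of $\Irr(H)$ are joined by a path of length $2$. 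Throughout I will use that in the bipartite graph $\Gamma(G,H)$ two vertices in the same part have even distance and two vertices in different parts have odd distance.

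For (a) $\Rightarrow$ (b), the key observation is that $1_G$ restricts to $1_H$, so $(1_G)_H = 1_H$ and therefore $1_G$ is a pendant vertex whose only neighbour is $1_H$. Now take any $\chi \in \Irr(G)$. If $\chi = 1_G$ then $[\chi_H, 1_H] = 1$; otherwise the distance from $\chi$ to $1_G$ is even, positive, and at most $3$ by the diameter hypothesis, hence equal to $2$, so $\chi$ and $1_G$ have a common neighbour, which can only be $1_H$. Thus $\chi$ is adjacent to $1_H$ and (i) follows. For (ii), two characters $\varphi, \psi \in \Irr(H)$ lie in the same part, so their distance is even and at most $3$, hence at most $2$; this produces a common neighbour and, by the second translation, $[\varphi^G, \psi^G] \neq 0$.

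For (b) $\Rightarrow$ (a) it remains only to bound all distances by $3$, since $d \geq 3$ is automatic. Condition (i) makes every $\chi \in \Irr(G)$ adjacent to $1_H$. Moreover every $\varphi \in \Irr(H)$ is adjacent to some $\chi \in \Irr(G)$: indeed $\varphi^G$ is a nonzero character, so it has an irreducible constituent $\chi$ with $[\chi_H, \varphi] \neq 0$. Consequently every $\varphi$ lies within distance $2$ of $1_H$. I then check the three types of pairs: two vertices of $\Irr(G)$ are joined through $1_H$ (distance $\leq 2$); two vertices of $\Irr(H)$ have a common neighbour by (ii) (distance $\leq 2$); and a vertex $\chi \in \Irr(G)$ and a vertex $\varphi \in \Irr(H)$ satisfy $\mathrm{dist}(\chi, \varphi) \leq \mathrm{dist}(\chi, 1_H) + \mathrm{dist}(1_H, \varphi) \leq 1 + 2 = 3$. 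Hence $d \leq 3$, and combined with $d \geq 3$ we conclude $d = 3$.

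The only step that is more than bookkeeping is the pendant-vertex argument for (i): recognising that $1_G$ has $1_H$ as its unique neighbour is what forces every $\chi$ onto $1_H$ and converts the mere diameter bound into the strong statement (i). Everything else is parity of distances in a bipartite graph together with the Frobenius-reciprocity dictionary established at the outset.
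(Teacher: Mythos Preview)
Your proof is correct and follows essentially the same approach as the paper's: both exploit bipartite parity to force even distances $\leq 2$, identify $1_H$ as the unique neighbour of $1_G$, and translate (ii) into the existence of a common neighbour. The only cosmetic difference is that in (b) $\Rightarrow$ (a) you bound $\mathrm{dist}(\chi,\varphi)$ via the triangle inequality through $1_H$, whereas the paper reaches the same bound by first noting that all $\Irr(G)$--$\Irr(G)$ and $\Irr(H)$--$\Irr(H)$ distances are $\leq 2$.
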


\begin{proof}
Suppose that~(a) holds.
Then any $\chi \in \Irr(G)$ is connected to the trivial character $1_G$ of $G$
via a path of length $l \leq 3$ in $\Gamma(G, H)$.
Since $l$ must be even, we have in fact $l \leq 2$.
Thus $\chi_H$ and $(1_G)_H = 1_H$ have a common constituent,
which must be $1_H$. Hence~(i) holds.

Similarly,
any two $\varphi, \psi \in \Irr(H)$ are connected
via a path of length $l \leq 3$.
Again, we must have $l \leq 2$.
Thus there is $\chi \in \Irr(G)$ such that
$[\chi_H, \varphi] \neq 0 \neq [\chi_H, \psi]$ holds.
This implies that $[\varphi^G, \psi^G] \neq 0$, and~(ii) holds.

Now suppose that~(b) holds.
Then~(i) implies that $\Gamma(G, H)$ is connected.
More precisely,
the distance in $\Gamma(G, H)$ between any two characters
$\chi, \eta \in \Irr(G)$ is at most $2$.
Similarly, by~(ii) the distance between any two characters
$\varphi, \psi \in \Irr(H)$ is at most $2$.
Thus the distance between any character $\chi \in \Irr(G)$
and any character $\varphi \in \Irr(H)$ is at most $3$.
Since $H$ is nontrivial, the diameter of $\Gamma(G, H)$ is $3$.
\end{proof}

\begin{rem}\label{independent}
We note that the conditions (b)~(i) and (b)~(ii)
in Proposition~\ref{diam3_equiv} are independent.

In many examples, condition~(b)~(ii) does not imply condition~(b)~(i).
We can take the symmetric groups $G = S_3$ and $H = S_2$ of degrees $3$
and $2$, where $\Gamma(G, H)$ is a path of length $4$.

\setlength{\unitlength}{1mm}

\begin{center}
\begin{picture}(20,35)
\put(0,5){\circle*{2}}
\put(10,5){\circle*{2}}
\put(0,0){\makebox(0,0){$1$}}
\put(10,0){\makebox(0,0){$1$}}
\put(0,30){\circle*{2}}
\put(10,30){\circle*{2}}
\put(20,30){\circle*{2}}
\put(0,35){\makebox(0,0){$1$}}
\put(10,35){\makebox(0,0){$2$}}
\put(20,35){\makebox(0,0){$1$}}
\put(0,5){\line(0,500){25}}
\put(0,5){\line(200,500){10}}
\put(10,5){\line(0,500){25}}
\put(10,5){\line(200,500){10}}
\end{picture}
\end{center}

An example where~(b)~(i) does not imply~(b)~(ii)
is given by the Frobenius group $G$ of order $351 = 3^3 \cdot 13$
with an elementary abelian kernel of order $3^3$
and a complement of order $13$,
and a subgroup $H$ of order $3^2$ in $G$.
Then $G$ is a subgroup of index 2 in $\AGL(1, 3^3)$.
Moreover, $\Irr(G)$ consists of $13$ linear characters and two irreducible
characters $\chi, \eta$ of degree $13$.
Furthermore, condition~(b)~(i) is satisfied
since the permutation character $1_H^G$ is exactly the sum of all
irreducible characters of $G$.
On the other hand, condition~(b)~(ii) is not satisfied
since there are characters $\varphi, \psi \in \Irr(H)$ such that
$\varphi^G = 3 \chi$ and $\psi^G = 3 \eta$, so that $[\varphi^G, \psi^G] = 0$.

\setlength{\unitlength}{1mm}

\begin{center}
\begin{picture}(140,35)
\put(0,5){\circle*{2}}
\put(70,5){\circle*{2}}
\put(80,5){\circle*{2}}
\put(90,5){\circle*{2}}
\put(100,5){\circle*{2}}
\put(110,5){\circle*{2}}
\put(120,5){\circle*{2}}
\put(130,5){\circle*{2}}
\put(140,5){\circle*{2}}
\put(0,0){\makebox(0,0){$1$}}
\put(70,0){\makebox(0,0){$1$}}
\put(80,0){\makebox(0,0){$1$}}
\put(90,0){\makebox(0,0){$1$}}
\put(100,0){\makebox(0,0){$1$}}
\put(110,0){\makebox(0,0){$1$}}
\put(120,0){\makebox(0,0){$1$}}
\put(130,0){\makebox(0,0){$1$}}
\put(140,0){\makebox(0,0){$1$}}
\put(0,30){\circle*{2}}
\put(10,30){\circle*{2}}
\put(20,30){\circle*{2}}
\put(30,30){\circle*{2}}
\put(40,30){\circle*{2}}
\put(50,30){\circle*{2}}
\put(60,30){\circle*{2}}
\put(70,30){\circle*{2}}
\put(80,30){\circle*{2}}
\put(90,30){\circle*{2}}
\put(100,30){\circle*{2}}
\put(110,30){\circle*{2}}
\put(120,30){\circle*{2}}
\put(130,30){\circle*{2}}
\put(140,30){\circle*{2}}
\put(0,35){\makebox(0,0){$1$}}
\put(10,35){\makebox(0,0){$1$}}
\put(20,35){\makebox(0,0){$1$}}
\put(30,35){\makebox(0,0){$1$}}
\put(40,35){\makebox(0,0){$1$}}
\put(50,35){\makebox(0,0){$1$}}
\put(60,35){\makebox(0,0){$1$}}
\put(70,35){\makebox(0,0){$1$}}
\put(80,35){\makebox(0,0){$1$}}
\put(90,35){\makebox(0,0){$1$}}
\put(100,35){\makebox(0,0){$1$}}
\put(110,35){\makebox(0,0){$1$}}
\put(120,35){\makebox(0,0){$1$}}
\put(130,35){\makebox(0,0){$13$}}
\put(140,35){\makebox(0,0){$13$}}
\put(0,5){\line(0,500){25}}
\put(0,5){\line(200,500){10}}
\put(0,5){\line(400,500){20}}
\put(0,5){\line(600,500){30}}
\put(0,5){\line(800,500){40}}
\put(0,5){\line(1000,500){50}}
\put(0,5){\line(1200,500){60}}
\put(0,5){\line(1400,500){70}}
\put(0,5){\line(1600,500){80}}
\put(0,5){\line(1800,500){90}}
\put(0,5){\line(2000,500){100}}
\put(0,5){\line(2200,500){110}}
\put(0,5){\line(2400,500){120}}
\put(0,5){\line(2600,500){130}}
\put(0,5){\line(2800,500){140}}
\put(70,5){\line(1200,500){60}}
\put(70,5){\line(1400,500){70}}
\put(80,5){\line(1000,500){50}}
\put(80,5){\line(1200,500){60}}
\put(90,5){\line(800,500){40}}
\put(90,5){\line(1000,500){50}}
\put(100,5){\line(600,500){30}}
\put(100,5){\line(800,500){40}}
\put(110,5){\line(400,500){20}}
\put(110,5){\line(600,500){30}}
\put(120,5){\line(200,500){10}}
\put(120,5){\line(400,500){20}}
\put(130,5){\line(0,500){25}}
\put(140,5){\line(0,500){25}}
\end{picture}
\end{center}

Examples of groups for which~(b)~(i) does not imply~(b)~(ii) for some
subgroup seem to be rare,
see Remark~\ref{rem:smallpermdegree} and Section~\ref{subsect:minsmall}.
\end{rem}

\begin{rem}
In the situation of Proposition~\ref{diam3_equiv},
for any two characters $\varphi,\psi \in \Irr(H)$
the induced characters $\varphi^G, \psi^G$ have a common constituent
$\chi \in \Irr(G)$.
However, in general there does not exist a character $\chi \in \Irr(G)$
which is a constituent of $\varphi^G$ for \emph{every} $\varphi \in \Irr(H)$.
As an example, one can take the Frobenius group $G$ of order $2^4 \cdot 5$
and a suitable subgroup $H$ of order $4$.
(There are seven classes of subgroups of order four, one of them works.)
Then the Frobenius matrix $F(G, H)$ and the Frobenius graph $\Gamma(G, H)$
are as follows:

\setlength{\unitlength}{1mm}

\begin{minipage}{4.5cm}
  \vspace*{1.5cm}
  $\displaystyle\left[
    \begin{array}{cccccccc}
         1 & 1 & 1 & 1 & 1 & 1 & 1 & 1 \\
         0 & 0 & 0 & 0 & 0 & 2 & 2 & 0 \\
         0 & 0 & 0 & 0 & 0 & 2 & 0 & 2 \\
         0 & 0 & 0 & 0 & 0 & 0 & 2 & 2
    \end{array}
  \right]$
  \vspace*{1.5cm}
\end{minipage} \hspace*{1cm} \begin{minipage}{6cm}
\begin{picture}(70,35) %(0, 17)
\put(0,5){\circle*{2}}
\put(50,5){\circle*{2}}
\put(60,5){\circle*{2}}
\put(70,5){\circle*{2}}
\put(0,0){\makebox(0,0){$1$}}
\put(50,0){\makebox(0,0){$1$}}
\put(60,0){\makebox(0,0){$1$}}
\put(70,0){\makebox(0,0){$1$}}
\put(0,30){\circle*{2}}
\put(10,30){\circle*{2}}
\put(20,30){\circle*{2}}
\put(30,30){\circle*{2}}
\put(40,30){\circle*{2}}
\put(50,30){\circle*{2}}
\put(60,30){\circle*{2}}
\put(70,30){\circle*{2}}
\put(0,35){\makebox(0,0){$1$}}
\put(10,35){\makebox(0,0){$1$}}
\put(20,35){\makebox(0,0){$1$}}
\put(30,35){\makebox(0,0){$1$}}
\put(40,35){\makebox(0,0){$1$}}
\put(50,35){\makebox(0,0){$5$}}
\put(60,35){\makebox(0,0){$5$}}
\put(70,35){\makebox(0,0){$5$}}
\put(0,5){\line(0,500){25}}
\put(0,5){\line(200,500){10}}
\put(0,5){\line(400,500){20}}
\put(0,5){\line(600,500){30}}
\put(0,5){\line(800,500){40}}
\put(0,5){\line(1000,500){50}}
\put(0,5){\line(1200,500){60}}
\put(0,5){\line(1400,500){70}}
\put(50,5){\line(0,500){25}}
\put(50,5){\line(200,500){10}}
\put(60,5){\line(-200,500){10}}
\put(60,5){\line(200,500){10}}
\put(70,5){\line(-200,500){10}}
\put(70,5){\line(0,500){25}}
\end{picture}
\end{minipage}

Thus $\Gamma(G, H)$ has diameter $3$.
\end{rem}

\begin{rem}\label{reformulate}
Condition~(b)~(ii) of Proposition~\ref{diam3_equiv} is equivalent
to the following one.
For every $\varphi, \psi \in \Irr(H)$, there exists $g \in G$
such that $[\varphi_{H \cap H^g}, \psi^g_{H \cap H^g}] > 0$.

Note that by Frobenius reciprocity and Mackey decomposition,
\[
   [\varphi^G, \psi^G] = \sum_{g \in R} [\varphi, (\psi^g_{H^g \cap H})^H]
             = \sum_{g \in R} [\varphi_{H^g \cap H}, \psi^g_{H^g \cap H}],
\]
where $R$ is a set of representatives of $H$-$H$-double cosets in $G$.

This condition implies the following:
For every $\varphi \in \Irr(H)$, there exists $g \in G$
such that $[\varphi_{H \cap H^g}, 1_{H \cap H^g}] > 0$.

The second condition also implies that,
for every linear character $\lambda$ of $H$,
there exists $g \in G$ such that $H \cap H^g$ is contained
in the kernel of $\lambda$.
\end{rem}

The following consequence will be useful in checking examples.

\begin{cor}\label{cond_ii_satisfied}
Let $G$ be a finite group and $H \leq G$.
Condition~(b)~(ii) in Proposition~\ref{diam3_equiv} is satisfied
if one of the following holds.

(i) There is $g \in G$ such that $|H^g \cap H| = 1$.

(ii) $H$ is core-free in $G$,
     and all nontrivial elements of $H$ are conjugate in $N_G(H)$.
\end{cor}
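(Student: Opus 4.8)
The plan is to verify condition~(b)(ii) of Proposition~\ref{diam3_equiv} in each case, using the reformulation from Remark~\ref{reformulate}: it suffices that for every $\varphi,\psi\in\Irr(H)$ there is $g\in G$ with $[\varphi_{H\cap H^g},\psi^g_{H\cap H^g}]>0$, since by Mackey $[\varphi^G,\psi^G]$ is a sum of nonnegative terms of this shape. Case~(i) is then immediate: if $H\cap H^g=1$ for some $g$, then restricting $\varphi$ and $\psi$ to the trivial group $H\cap H^g$ gives $[\varphi_{H\cap H^g},\psi^g_{H\cap H^g}]=\varphi(1)\psi(1)>0$ for every pair, so one and the same $g$ works universally and (b)(ii) holds.

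For case~(ii) I would first pin down the structure of $H$. Since $N:=N_G(H)$ acts transitively by conjugation on $H\setminus\{1\}$, all nontrivial elements share a common order, which must be a prime $p$ (an element of order $p^2$ would be conjugate to its nontrivial $p$-th power, of smaller order). As $Z(H)$ is characteristic, hence $N$-invariant, and meets $H\setminus\{1\}$, transitivity forces $Z(H)=H$. Thus $H$ is elementary abelian and $\Irr(H)$ consists of $|H|$ linear characters.

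The key step is to upgrade transitivity on $H\setminus\{1\}$ to transitivity of $N$ on $\Irr(H)\setminus\{1_H\}$. Identifying $H$ with an $\F_p$-space $V$ and $\Irr(H)$ with the dual $V^*$, the conjugation action and its contragredient have equal permutation characters, because each $n\in N$ fixes subspaces of equal dimension $\dim\ker(n-1)$ in $V$ and in $V^*$; Burnside's orbit-counting lemma then gives equally many orbits on $V$ and on $V^*$, hence on $V\setminus\{0\}$ and $V^*\setminus\{0\}$, so transitivity transfers. With this in hand I would exploit $({}^n\varphi)^G=\varphi^G$ for $n\in N$, which makes $[\varphi^G,\psi^G]$ constant as $\varphi,\psi$ range over their single nontrivial $N$-orbit. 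When $\varphi=\psi=1_H$ we get $[1_H^G,1_H^G]>0$; when both are nontrivial, $[\varphi^G,\psi^G]=[\mu^G,\mu^G]>0$ for any fixed nontrivial $\mu$; and in the mixed case it remains only to find one nontrivial $\mu_0$ with $[1_H^G,\mu_0^G]\neq0$. Here I would use that $H$ is core-free and nontrivial, hence not normal, so some $g_0$ yields a \emph{proper} intersection $H\cap H^{g_0}<H$; lying in a hyperplane $\ker\mu_0$ of the elementary abelian $H$, it makes the $g_0$-summand of $[1_H^G,\mu_0^G]$ positive, which finishes the mixed case and with it case~(ii).

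The main obstacle is the transfer of transitivity from $H$ to $\Irr(H)$: transitivity on nonzero vectors is a property of the specific representation rather than of the image of $N$ in $\GL(V)$ as an abstract group, so a naive argument is awkward. The equality of the permutation characters on $V$ and $V^*$ is what resolves this cleanly, and it is precisely this transitivity, combined with the conjugation-invariance of induction, that lets one collapse all remaining pairs to the three model cases above.
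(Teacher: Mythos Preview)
Your proof is correct. Part~(i) matches the paper's argument exactly. For part~(ii) the two approaches diverge in the tools used. The paper invokes Brauer's Permutation Lemma directly to transfer transitivity from $H\setminus\{1\}$ to $\Irr(H)\setminus\{1_H\}$, without first analysing the structure of $H$; and for the mixed case $[1_H^G,\mu^G]\neq 0$ it appeals to the connectedness of $\Gamma(G,H)$ established in Proposition~\ref{corefree} (using that $H$ is core-free) rather than exhibiting a specific $\mu_0$. Your route is more self-contained: you first show $H$ is elementary abelian, then reprove the relevant instance of Brauer's lemma by an explicit fixed-point count on $V$ and $V^*$, and finally produce the mixed-case witness by hand via Mackey and a hyperplane-kernel argument. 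The paper's version is shorter but leans on two cited results; yours avoids those dependencies and makes the underlying mechanism visible. One minor bookkeeping point: with the Mackey convention of Remark~\ref{reformulate}, the $g_0$-summand of $[1_H^G,\mu_0^G]$ involves $(\mu_0)^{g_0}$, so the condition is $H\cap H^{g_0}\subseteq(\ker\mu_0)^{g_0}$ rather than $\subseteq\ker\mu_0$; this is harmless since you may equally well choose $\mu_0$ for $H\cap H^{g_0^{-1}}$, or simply use the symmetry $[1_H^G,\mu_0^G]=[\mu_0^G,1_H^G]$ and read off the $g_0$-term on the other side.
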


\begin{proof}
(i) Use the reformulation from Remark~\ref{reformulate},
and take $g$ with $|H^g \cap H| = 1$ as one element in $R$,
then $[\varphi_{H^g \cap H}, \psi^g_{H^g \cap H}] \neq 0$.

(ii)
We may assume that $H$ is nontrivial.
By Brauer's Permutation Lemma,
all nontrivial elements in $\Irr(H)$ are conjugate in $N_G(H)$.
Since $\Gamma(G, H)$ is connected,
there exists a nontrivial character $\varphi \in \Irr(H)$ such that
$[1_H^G, \varphi^G] > 0$.
Then $[1_H^G, \psi^G] > 0$ for all $\psi \in \Irr(H)$.
Since also $[\varphi^G, \psi^G] = [\varphi^G, \varphi^G] > 0$
for all nontrivial $\varphi, \psi \in \Irr(H)$
the result follows.
\end{proof}

An example where condition~(b)~(ii) in Proposition~\ref{diam3_equiv}
is satisfied but Corollary~\ref{cond_ii_satisfied} cannot be applied
is the group $G = S_3 \times S_3$
where $H$ is a non-normal $S_3$ type subgroup.
Note that two different $G$-conjugates of $H$ intersect in a subgroup
of order $2$ or $3$.

% By part (i), the following is no longer needed.
% \begin{cor}
%  Let $H = \{1,h\}$ be a proper subgroup of order $2$ in a finite group $G$.
%  Then $\Gamma(G,H)$ has diameter $3$ if and only if $[\chi_H,1_H] \neq 0$
%  for  every $\chi \in \Irr(G)$.
%  (The latter means that $\chi(h) \neq -\chi(1)$
%  for every $\chi \in \Irr(G)$.)
% \end{cor}
%
% \begin{proof}
%  This is an immediate consequence of our first corollary.
% \end{proof}

Now we record some easy consequences of Proposition~\ref{diam3_equiv}.
Several results about diameter three subgroups hold in fact already if
condition~(b)~(i) of this proposition is satisfied.
We introduce the following notation.

\begin{definition}
A proper subgroup $H$ of a group $G$ is called \emph{rich in $G$} if
$[\chi_H, 1_H] \not= 0$ holds for all $\chi\in\Irr(G)$.
\end{definition}

This terminology is motivated by part~(i) of the following corollary.

\begin{cor}\label{cor_easy}
Let $H$ be a rich subgroup in a finite group $G$.
Then the following assertions hold:

\begin{itemize}
\item[(i)]
  Each character in $\Irr(G)$ is a constituent of
  the permutation character $1_H^G$.
  In particular, we have $[G:H] \geq \sum_{\chi \in \Irr(G)} \chi(1)$.
\item[(ii)]
  $H$ is core-free in $G$.
\item[(iii)]
  The derived subgroup $G'$ of $G$ contains $H$;
  moreover, if $G' = H$ then $G$ is abelian.
\item[(iv)]
  If $\chi$ is an irreducible character of $G$ of degree $2$ then
  the kernel of $\chi$ contains $H'$.
\end{itemize}
\end{cor}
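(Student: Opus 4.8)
The plan is to derive all four statements from Frobenius reciprocity, whose instance $[\chi_H, 1_H] = [\chi, 1_H^G]$ rephrases richness as the single statement that every $\chi \in \Irr(G)$ occurs in the permutation character $1_H^G$; this is exactly assertion~(i). For the numerical bound in~(i) I would write $1_H^G = \sum_{\chi \in \Irr(G)} m_\chi \chi$ with all multiplicities $m_\chi = [\chi, 1_H^G] \geq 1$ and evaluate at the identity, getting $[G:H] = 1_H^G(1) = \sum_\chi m_\chi \chi(1) \geq \sum_\chi \chi(1)$. For~(ii), richness is precisely condition~(b)(i) of Proposition~\ref{diam3_equiv}, and the proof of that proposition shows that~(b)(i) alone forces $\Gamma(G,H)$ to be connected; Proposition~\ref{corefree} then yields $\Core_G(H) = 1$. (Alternatively one argues directly: writing $N = \Core_G(H)$, richness makes $1_N$ a constituent of $\chi_N$ for every $\chi \in \Irr(G)$, whereas any irreducible character of $G$ lying over a nontrivial $\theta \in \Irr(N)$ has $\chi_N$ equal to a sum of $G$-conjugates of $\theta$, none of them trivial, which would be a contradiction unless $N = 1$.)

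For~(iii) I would test richness against the linear characters of $G$. If $\lambda \in \Irr(G)$ has degree $1$, then $\lambda_H$ is again linear, hence irreducible, so $[\lambda_H, 1_H] \neq 0$ forces $\lambda_H = 1_H$, i.e.\ $H \leq \ker\lambda$. Intersecting over all linear $\lambda$ and using $G' = \bigcap_\lambda \ker\lambda$ gives $H \leq G'$. For the converse, suppose $G' = H$. Then $G/H$ is abelian with exactly $[G:H]$ irreducible characters, all of degree $1$; these already contribute $[G:H]$ to the sum $\sum_{\chi \in \Irr(G)} \chi(1)$, so the inequality $[G:H] \geq \sum_\chi \chi(1)$ from~(i) leaves no room for any character of degree $>1$, and $G$ is abelian. (Equivalently, for $H = G'$ normal the character $1_H^G$ is the inflation of the regular character of the abelian group $G/G'$, hence a sum of linear characters, which by~(i) must exhaust $\Irr(G)$.)

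Finally, for~(iv) let $\chi \in \Irr(G)$ with $\chi(1) = 2$. Richness supplies $1_H$ as a constituent of $\chi_H$, and since $\chi_H$ has degree $2$ the complementary constituent also has degree $1$; thus $\chi_H = 1_H + \varphi$ with $\varphi$ linear (possibly $\varphi = 1_H$). Both summands are linear and hence trivial on $H'$, so $\chi(h) = 2 = \chi(1)$ for every $h \in H'$, and the standard fact that $\chi(h) = \chi(1)$ forces $h \in \ker\chi$ gives $H' \leq \ker\chi$. I expect the only step requiring genuine care to be the converse in~(iii): one must correctly combine the two opposite inequalities between $[G:H]$ and $\sum_\chi \chi(1)$ (or identify $1_{G'}^G$ with the inflated regular character of $G/G'$), whereas all the other deductions are immediate consequences of Frobenius reciprocity together with elementary character theory.
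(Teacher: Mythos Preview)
Your proof is correct and matches the paper's argument for parts~(i), (ii), and~(iv) essentially verbatim (the paper is simply terser). The one genuine difference is in the ``moreover'' clause of~(iii): you deduce that $G$ is abelian from the degree inequality in~(i), arguing that the $[G:H]$ linear characters of $G/G'$ already saturate $\sum_\chi \chi(1)$; the paper instead observes that $G' = H$ would make $H$ normal, whence $H = \Core_G(H) = 1$ by part~(ii), so $G' = 1$. Both arguments are valid one-liners; the paper's has the small advantage of showing directly that $H$ must be trivial (not just that $G$ is abelian), while yours illustrates that the numerical bound in~(i) is already sharp enough to force this conclusion.
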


\begin{proof}
(i) follows immediately from the definition of richness.

(ii) follows from the fact that $\Gamma(G, H)$ is connected,
     and Proposition~\ref{corefree}.

(iii) follows from the definition of richness since $[\lambda_H, 1_H] \neq 0$
     for every linear character $\lambda$ of $G$,
     and that $G' = H$ happens only if $H$ is trivial, by part~(ii).

(iv) Suppose that $\chi \in \Irr(G)$ satisfies $[\chi_H, 1_H] \not= 0$
 and that $\chi(1) = 2$.
 Then $\chi_H$ must be the sum of two linear characters of $H$.
 Thus $H'$ is contained in the kernel of $\chi$.
\end{proof}

Part~(i) of Corollary~\ref{cor_easy} implies that
rich subgroups must be ``small'',
see Section~\ref{sect:large}.

\begin{prop}\label{reduction}
Let $G$ be a finite group with subgroups $1 < L \leq H < K \leq G$.
If $H$ is rich in $K$ then $L$ is rich in $G$,
and if $\Gamma(K, H)$ has diameter $3$
then $\Gamma(G, L)$ has diameter $3$.
\end{prop}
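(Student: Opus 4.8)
The plan is to prove the two assertions in turn, deriving the diameter statement from the richness statement together with Proposition~\ref{diam3_equiv}. For the richness claim I would isolate two monotonicity principles: richness of a fixed subgroup \emph{extends} to a larger overgroup, and richness of a subgroup \emph{descends} to any smaller subgroup inside the same ambient group. Composing these along the chain $L \leq H < K \leq G$ then yields that $L$ is rich in $G$.

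For the extension principle (from $K$ to $G$): given $\chi \in \Irr(G)$, I would restrict to $K$ and write $\chi_K = \sum_{\psi \in \Irr(K)} a_\psi \psi$ with all $a_\psi \geq 0$ and not all zero. By transitivity of restriction and linearity, $[\chi_H, 1_H] = \sum_\psi a_\psi [\psi_H, 1_H]$; since $H$ is rich in $K$ every $[\psi_H, 1_H] > 0$, and some $a_\psi > 0$, so the sum is positive. For the descent principle (from $H$ to $L$): given $\chi \in \Irr(G)$, Frobenius reciprocity gives $[\chi_L, 1_L] = [\chi_H, 1_L^H]$; since $1_L^H$ contains $1_H$ as a constituent (indeed $[1_L^H, 1_H] = [1_L, 1_L] = 1$) and $\chi_H$ is a genuine character, $[\chi_H, 1_L^H] \geq [\chi_H, 1_H] > 0$. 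Hence $L$ is rich in $G$, which also settles condition~(b)(i) of Proposition~\ref{diam3_equiv} for the pair $(G, L)$.

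For the diameter claim I would verify conditions~(b)(i) and~(b)(ii) of Proposition~\ref{diam3_equiv} for $(G, L)$, first noting that $L$ is nontrivial by hypothesis and proper since $L < K \leq G$. Condition~(b)(i) is exactly ``$L$ rich in $G$'', already established, because $\Gamma(K, H)$ having diameter $3$ forces $H$ to be rich in $K$. For~(b)(ii) I must show $[\alpha^G, \beta^G] \neq 0$ for all $\alpha, \beta \in \Irr(L)$. Writing $\alpha^H = \sum_\varphi c_\varphi \varphi$ and $\beta^H = \sum_\psi d_\psi \psi$ with nonnegative, not all zero coefficients, transitivity of induction gives $\alpha^K = \sum_\varphi c_\varphi \varphi^K$ and likewise for $\beta$, so $[\alpha^K, \beta^K] = \sum_{\varphi, \psi} c_\varphi d_\psi [\varphi^K, \psi^K]$; since $\Gamma(K, H)$ has diameter $3$, condition~(b)(ii) for $(K, H)$ makes every $[\varphi^K, \psi^K] > 0$, whence $[\alpha^K, \beta^K] > 0$.

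It remains to push this positivity up from $K$ to $G$. Here I would apply Frobenius reciprocity and the Mackey formula to $(\beta^G)_K = ((\beta^K)^G)_K$: the trivial double coset contributes $\beta^K$ itself and every other contribution is a genuine character, so $[\alpha^G, \beta^G] = [\alpha^K, (\beta^G)_K] \geq [\alpha^K, \beta^K] > 0$. This establishes~(b)(ii) for $(G, L)$, and Proposition~\ref{diam3_equiv} then yields that $\Gamma(G, L)$ has diameter $3$. The main obstacle is purely one of bookkeeping with positivity: every step depends on multiplicities being counted nonnegatively, so one must consistently work with genuine rather than merely virtual characters and confirm that at least one relevant multiplicity is strictly positive. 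The single genuinely nonformal move is the final $K$-to-$G$ step, where the contribution of the identity double coset in Mackey's formula is precisely what preserves a common constituent.
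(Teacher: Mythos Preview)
Your proof is correct and follows essentially the same approach as the paper's. The paper compresses the argument into two one-line chains of inequalities, $[\chi_L,1_L] \geq [\chi_H,1_H] \geq [\eta_H,1_H] > 0$ and $[\alpha^G,\beta^G] \geq [\varphi^G,\psi^G] \geq [\varphi^K,\psi^K] > 0$ (picking single constituents $\eta,\varphi,\psi$ rather than summing over all of them), but the underlying monotonicity principles---and in particular the implicit use of the identity double coset in Mackey's formula for the $K$-to-$G$ step---are identical to what you spell out.
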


\begin{proof}
Let $H$ be rich in $K$, let $\chi \in \Irr(G)$,
and let $\eta$ be a constituent of $\chi_K$.
Then $[\chi_L, 1_L] \geq [\chi_H, 1_H] \geq [\eta_H, 1_H] > 0$,
and $L$ is rich in $G$.
Suppose that $\Gamma(K, H)$ has diameter $3$,
let $\alpha, \beta \in \Irr(L)$,
and let $\varphi, \psi \in \Irr(H)$ such that
$[\alpha^H, \varphi] \neq 0 \neq [\beta^H, \psi]$.
Then
$[\alpha^G, \beta^G] \geq [\varphi^G, \psi^G] \geq [\varphi^K, \psi^K] > 0$,
and $\Gamma(G, L)$ has diameter $3$.
\end{proof}

By Proposition~\ref{reduction},
a finite group $G$ contains a diameter three subgroup
if and only if $G$ contains a diameter three subgroup of prime order.
By Corollary~\ref{cond_ii_satisfied}~(i),
this is the case if and only if $G$ contains a rich subgroup of prime order,
which is the case if and only if $G$ contains a nontrivial rich subgroup.
Note also that this property can be decided from the character table of $G$.

% Proposition~\ref{reduction} implies that,
% in order to test whether a finite group $G$ contains
% a rich subgroup or a diameter three subgroup,
% it suffices to test non-normal subgroups $H$ of prime order in $G$.
% And for such $H$, Corollary~\ref{cond_ii_satisfied}~(i) implies that
% only condition~(b)~(i) of Proposition~\ref{diam3_equiv} must be checked,
% i.~e., whether $H$ is nontrivial and rich in $G$.
% (Note that the subgroup in the example group of order $3^3 \cdot 13$ in
% Remark~\ref{independent} has order $3^2$.)
% In particular,
% the existence of a rich or a diameter three subgroup can be computed
% from the character table of $G$.
% Note also that whenever $G$ has a nontrivial rich subgroup,
% $G$ has also a diameter three subgroup (of prime order).

\begin{lem}\label{lem:reduction}
Let $H$ be a rich subgroup in a finite group $G$,
and let $U$ be a subgroup of $G$ such that $G = HU$.
Then $U \cap H$ is rich in $U$.
\end{lem}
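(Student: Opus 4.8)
The plan is to translate richness into the language of permutation characters and then exploit the hypothesis $G = HU$ through a Mackey decomposition. By Frobenius reciprocity, $H$ is rich in $G$ precisely when $[\chi, 1_H^G] = [\chi_H, 1_H] \neq 0$ for every $\chi \in \Irr(G)$, i.\,e.\ when every irreducible character of $G$ occurs as a constituent of the permutation character $1_H^G$. Likewise, to prove that $U \cap H$ is rich in $U$ it suffices to show that every $\xi \in \Irr(U)$ occurs as a constituent of $1_{U \cap H}^U$.

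The key step is the identity $(1_H^G)_U = 1_{U \cap H}^U$. First I would observe that $G = HU$ forces $G = UH$ as well, by taking inverses, so that $G = U \cdot 1 \cdot H$ is a single $U$-$H$-double coset. Applying the Mackey decomposition to the restriction of $1_H^G$ to $U$ therefore leaves just one summand, indexed by $g = 1$, namely $1_{U \cap H}^U$. This is the part I expect to carry the weight of the argument; everything else is a formal manipulation, and the single genuine subtlety is exactly this reduction to one double coset, which is where the hypothesis $G = HU$ enters.

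With this identity in hand, I would fix $\xi \in \Irr(U)$ and compute, using Frobenius reciprocity twice,
\[
   [\xi_{U \cap H}, 1_{U \cap H}]
      = [\xi, 1_{U \cap H}^U]
      = [\xi, (1_H^G)_U]
      = [\xi^G, 1_H^G].
\]
Since $H$ is rich in $G$, the character $1_H^G$ has every element of $\Irr(G)$ as a constituent with strictly positive multiplicity, whereas $\xi^G$ is a nonzero character of $G$ and hence has at least one irreducible constituent. Expanding both arguments into irreducibles, $[\xi^G, 1_H^G]$ becomes a sum of products of nonnegative multiplicities in which at least one term is strictly positive, so it does not vanish. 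Hence $[\xi_{U \cap H}, 1_{U \cap H}] \neq 0$, as required.

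It remains only to observe that $U \cap H$ is a proper subgroup of $U$, so that richness is even meaningful here: if $U \leq H$ held, then $G = HU = H$ would contradict the fact that $H$, being rich, is proper in $G$. Thus $U \cap H < U$ and the displayed computation shows $U \cap H$ is rich in $U$, completing the plan.
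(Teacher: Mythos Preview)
Your proof is correct and follows essentially the same route as the paper: both arguments combine Frobenius reciprocity with the Mackey decomposition, using $G = HU$ to reduce to a single double coset. The only cosmetic difference is that the paper applies Mackey to obtain $(\chi^G)_H = (\chi_{U\cap H})^H$ rather than your $(1_H^G)_U = 1_{U\cap H}^U$; your additional verification that $U\cap H$ is proper in $U$ is a point the paper leaves implicit.
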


\begin{proof}
Let $\chi\in\Irr(U)$.
By Frobenius reciprocity and Mackey decomposition, we have
\[
   [\chi_{U \cap H}, 1_{U \cap H}] = [(\chi_{U \cap H})^H, 1_H]
                                   = [(\chi^G)_H, 1_H],
\]
and the right hand side is nonzero because any constituent $\psi$ of $\chi^G$
satisfies $[\psi_H, 1_H] > 0$, by the assumption that $H$ is rich in $G$.
\end{proof}

The existence of a diameter three subgroup in the group $G$ does in general not
imply the existence of a diameter three subgroup in the factor group of $G$
modulo a normal subgroup,
see for example Corollary~\ref{cor:supersolvable} below.
However, the following holds.

\begin{lem}\label{lem:factor}
\begin{itemize}
\item[1.]
  Let $H$ be a rich subgroup in $G$,
  and let $N$ be a proper normal subgroup of $G$.
  Then $HN/N$ is a rich subgroup in $G/N$.
\item[2.]
  Let $H$ be a diameter three subgroup of $G$, with $H$ of prime order.
  If $N$ is a normal subgroup of $G$ that does not contain $H$
  then $HN/N$ is a diameter three subgroup of $G/N$.
\end{itemize}
\end{lem}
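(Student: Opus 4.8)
The plan for part~1 is to pass between characters of $G/N$ and characters of $G$ by inflation. Write $\overline{G} := G/N$ and $\overline{H} := HN/N \cong H/(H \cap N)$, and let $\chi \in \Irr(\overline{G})$. Inflating $\chi$ along the quotient map $G \to \overline{G}$ gives $\widetilde{\chi} \in \Irr(G)$ with $N \leq \ker\widetilde{\chi}$; since $H$ is rich in $G$, we have $[\widetilde{\chi}_H, 1_H] \neq 0$. First I would observe that $\widetilde{\chi}_H$ has $H \cap N$ in its kernel (because $H \cap N \leq N \leq \ker\widetilde{\chi}$), so $\widetilde{\chi}_H$ is the inflation to $H$ of a character of $H/(H \cap N)$; tracing through the isomorphism $H/(H \cap N) \cong \overline{H}$ this character is exactly $\chi_{\overline{H}}$, and inflation preserves inner products, so $[\chi_{\overline{H}}, 1_{\overline{H}}] = [\widetilde{\chi}_H, 1_H] \neq 0$. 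Since this holds for every $\chi \in \Irr(\overline{G})$, it remains only to check that $\overline{H}$ is a proper subgroup of $\overline{G}$: if $\overline{H} = \overline{G}$ then a nontrivial irreducible character of $\overline{G}$ (which exists because $N$ is proper) would have trivial inner product with $1_{\overline{G}}$, contradicting the value just computed. Hence $\overline{H}$ is rich in $\overline{G}$.

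For part~2, set again $\overline{G} = G/N$ and $\overline{H} = HN/N$. Because $H$ has prime order $p$ and $H \not\leq N$, the intersection $H \cap N$ is a proper subgroup of $H$, hence trivial, so $\overline{H} \cong H$ again has order $p$; in particular $\overline{H} \neq 1$, and $N \neq G$ is proper. Since $H$ is a diameter three subgroup, condition~(b)(i) of Proposition~\ref{diam3_equiv} says $H$ is rich in $G$, so part~1 shows $\overline{H}$ is rich in $\overline{G}$. The plan is then to verify the two conditions of Proposition~\ref{diam3_equiv} for the pair $(\overline{G}, \overline{H})$; condition~(b)(i) is exactly richness, already in hand.

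The key step --- and the only delicate point --- is to establish condition~(b)(ii) for $\overline{H}$, which I would derive from Corollary~\ref{cond_ii_satisfied}(i): it suffices to produce $\overline{g} \in \overline{G}$ with $\overline{H} \cap \overline{H}^{\overline{g}} = 1$. Since $\overline{H}$ has prime order, two distinct conjugates automatically meet trivially, so the task reduces to showing that $\overline{H}$ is not normal in $\overline{G}$, equivalently that $\overline{H}$ is core-free. This is precisely the point at which the argument could fail: a priori $HN$ could be normal in $G$, which would make $\overline{H}$ a nontrivial normal subgroup and force $\Gamma(\overline{G}, \overline{H})$ to be disconnected by Proposition~\ref{corefree}. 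The resolution is that richness already rules this out: by Corollary~\ref{cor_easy}(ii) a rich subgroup is core-free, and $\overline{H}$ is rich by the previous paragraph. Hence $\overline{H}$ is core-free and non-normal, Corollary~\ref{cond_ii_satisfied}(i) applies, and~(b)(ii) holds. As $\overline{H}$ is nontrivial, core-free (so proper), and satisfies both~(b)(i) and~(b)(ii), Proposition~\ref{diam3_equiv} shows that $\Gamma(\overline{G}, \overline{H})$ has diameter $3$, i.e.\ $\overline{H}$ is a diameter three subgroup of $G/N$.
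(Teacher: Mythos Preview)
Your proof is correct and follows essentially the same approach as the paper: for part~1 both arguments pass from $\Irr(G/N)$ to $\Irr(G)$ by inflation, with the paper phrasing the key step module-theoretically (an $H$-fixed vector is automatically $HN$-fixed since $N$ acts trivially) while you phrase it via the inflation identity $[\chi_{\overline{H}},1_{\overline{H}}]=[\widetilde{\chi}_H,1_H]$; for part~2 the paper simply says it is a special case of part~1 together with Corollary~\ref{cond_ii_satisfied}, and you have correctly spelled out the implicit reasoning (rich $\Rightarrow$ core-free $\Rightarrow$ a prime-order $\overline{H}$ has a conjugate meeting it trivially).
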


\begin{proof}
In order to prove part 1, let $\chi \in \Irr(G/N)$,
and view $\chi$ as a character of $G$.
Since $[\chi_H,1_H] > 0$ the subgroup $H$ fixes a vector $v \neq 0$
in a module $V$ affording $\chi$.
Since $N$ acts trivially on $V$, the vector $v$ is also fixed by $HN$,
so that $[\chi_{HN}, 1_{HN}] > 0$.
Also, choosing $\chi$ nontrivial we see that we cannot have $HN = G$.
This shows that $HN/N$ is rich in $G/N$.

Part 2.~is just a special case of part 1.~where $H$ has prime order,
by Corollary~\ref{cond_ii_satisfied}.
\end{proof}

\begin{cor}\label{cor:supersolvable}
Let $H$ be a nontrivial subgroup of a supersolvable group $G$.
Then $H$ is not rich in $G$,
and $\Gamma(G, H)$ does not have diameter $3$.
\end{cor}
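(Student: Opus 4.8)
The plan is to deduce the second assertion from the first. By Proposition~\ref{diam3_equiv}, if $\Gamma(G,H)$ has diameter $3$ then condition~(b)~(i) holds, that is, $[\chi_H,1_H]\neq 0$ for every $\chi\in\Irr(G)$; for a proper subgroup this says precisely that $H$ is rich. Hence it suffices to show that no nontrivial subgroup of $G$ is rich. (If $H=G$ then $H$ is not proper, so it is not rich, and $\Gamma(G,G)$ is a perfect matching, which is disconnected for $G\neq 1$ and so certainly not of diameter $3$; thus I may assume $H$ is proper.)

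I would prove the sharper statement that \emph{every rich subgroup of a supersolvable group is trivial}, by induction on $|G|$, using two standard facts about supersolvable groups: every quotient is again supersolvable, and every minimal normal subgroup is cyclic of prime order. The base case is immediate, since for $|G|\le p$ there is no nontrivial proper subgroup at all.

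For the inductive step, suppose $H$ is rich and $H\neq 1$. By Corollary~\ref{cor_easy}~(ii), $H$ is core-free, so the nontrivial subgroup $H$ is not normal in $G$. Choose a minimal normal subgroup $N$ of $G$; by supersolvability $|N|=p$ for a prime $p$, and $N$ is a \emph{proper} normal subgroup, for otherwise $G$ would be cyclic of prime order and would have no nontrivial proper subgroup, contradicting $H\neq 1$. By Lemma~\ref{lem:factor}~(1), $HN/N$ is rich in the supersolvable group $G/N$, which has smaller order, so the induction hypothesis gives $HN/N=1$, i.e.\ $H\le N$. Since $|N|=p$ this forces $H=N$, so $H$ is normal, contradicting core-freeness. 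Therefore $H=1$, completing the induction; the first assertion of the corollary follows, and with it the second.

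The argument is short once the right reduction is in place; the main point is to pass to the quotient by a minimal normal subgroup and exploit that supersolvability makes it cyclic of prime order, so that the conclusion $H\le N$ collapses to $H=N$ and can be defeated by core-freeness. The one subtlety worth flagging is that, under the definition used here, the trivial subgroup itself counts as rich, so the induction must conclude $H=1$ rather than producing an outright contradiction at each stage; correspondingly Lemma~\ref{lem:factor}~(1) only guarantees that $HN/N$ is rich (and possibly trivial), which is exactly the statement the induction consumes.
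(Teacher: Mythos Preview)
Your proof is correct and follows essentially the same approach as the paper's: both pass to the quotient by a minimal normal subgroup $N$ (cyclic of prime order by supersolvability), invoke Lemma~\ref{lem:factor} to make $HN/N$ rich, and use induction/a minimal counterexample together with core-freeness to reach a contradiction. The only cosmetic difference is the order of the steps: the paper first argues from core-freeness that $N$ cannot contain $H$, hence $HN/N\neq 1$, contradicting minimality; you instead let the induction force $H\leq N$ and then derive the contradiction to core-freeness.
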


\begin{proof}
Let $G$ be a counterexample of minimal order,
and let $N$ be a minimal normal subgroup of $G$.
Then $N$ has prime order, so that $N < G$.
Since $H$ is core-free in $G$, $N$ does not contain $H$.
Thus $HN/N$ is a nontrivial rich subgroup of the supersolvable group $G/N$,
by Lemma~\ref{lem:factor}.
This contradicts the minimality of $G$.
\end{proof}

% By~Corollary~\ref{cor_easy}~(iii),
% it is clear that a \emph{maximal} subgroup $H$ of $G$ can be
% rich in $G$ only if $G$ is perfect.
% In fact we get more.

\begin{cor}\label{cor:maximal-in-simple}
Let $H$ be a rich subgroup in $G$.
If $H$ is maximal in $G$ then $G$ is simple.
\end{cor}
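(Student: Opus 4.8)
The plan is to prove simplicity directly: I will take an arbitrary normal subgroup $N$ of $G$ with $N < G$ and show that $N = 1$. Since $H$ is rich it is in particular proper, so $G \neq 1$, and establishing that the only proper normal subgroup is trivial yields that $G$ is simple.

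The key leverage is the maximality of $H$. First I would observe that $HN$ is a subgroup of $G$ containing $H$, so by maximality $HN = H$ or $HN = G$. To exclude the second possibility, I would invoke Lemma~\ref{lem:factor}, part~1: since $N$ is a \emph{proper} normal subgroup of $G$ and $H$ is rich in $G$, the image $HN/N$ is rich in $G/N$, hence in particular a proper subgroup of $G/N$. This forces $HN \neq G$, ruling out the case $HN = G$. Therefore $HN = H$, i.e.\ $N \leq H$.

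To finish, I would use that $N$ is normal in $G$ and contained in $H$, so $N \leq \Core_G(H)$. By Corollary~\ref{cor_easy}~(ii) the rich subgroup $H$ is core-free, i.e.\ $\Core_G(H) = 1$, whence $N = 1$. Since $N$ was an arbitrary proper normal subgroup and $G \neq 1$, the group $G$ is simple.

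The only point requiring care — the main (if modest) obstacle — is the elimination of the case $HN = G$: this step is exactly where richness is essential, and it rests on the descent of richness to quotients in Lemma~\ref{lem:factor}~(1) together with the fact that rich subgroups are proper by definition. Everything else is routine bookkeeping built on the core-freeness of $H$ from Corollary~\ref{cor_easy}.
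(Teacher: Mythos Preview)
Your argument is correct and essentially identical to the paper's own proof: both use maximality to reduce to the dichotomy $N \leq H$ or $HN = G$, eliminate $HN = G$ via Lemma~\ref{lem:factor}, and eliminate $N \leq H$ via core-freeness from Corollary~\ref{cor_easy}~(ii). Your write-up is simply a bit more explicit.
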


\begin{proof}
If $H$ is maximal in $G$
and $N$ is a nontrivial proper normal subgroup of $G$
then either $N \leq H$ or $HN = G$ holds.
The former cannot happen because rich subgroups are core-free.
The latter cannot happen because of Lemma~\ref{lem:factor}.
\end{proof}

See Section~\ref{sect:large} for examples of rich subgroups
that are maximal in simple groups.

% Now we come to another formal property of rich and diameter three subgroups.
In certain situations, one can go down from a finite group $G$
with a rich subgroup to a smaller group with a rich subgroup.

\begin{lem}\label{lem:godown}
Let $G$ be a finite group with subgroups $H < K < G$.

(i) Suppose that, for $\eta \in \Irr(K)$,
there are $m_\eta \in \mathbb{N}$ and $\chi_\eta \in \Irr(G)$
such that $(\chi_\eta)_K = m_\eta \eta$.
If the pair $(G, H)$ satisfies
Condition~(b)(i) in Proposition~\ref{diam3_equiv}
then the pair $(K, H)$ also satisfies this condition.

(ii) Suppose that, for $\chi \in \Irr(G)$,
there are $n_\chi \in \mathbb{N}$ and $\eta_\chi \in \Irr(K)$
such that $\chi_K = n_\chi \eta_\chi$.
If the pair $(G, H)$ satisfies
Condition~(b)(ii) in Proposition~\ref{diam3_equiv}
then the pair $(K, H)$ also satisfies this condition.
\end{lem}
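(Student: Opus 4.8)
The plan is to handle the two parts independently; in each case the engine is the transitivity of restriction, $(\chi_K)_H = \chi_H$ (valid since $H < K < G$), combined with Frobenius reciprocity, and the decisive feature of the hypotheses is that the restrictions in question are \emph{pure} multiples of a single irreducible character.

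For part~(i), I would fix $\eta \in \Irr(K)$ and use the hypothesis to produce $\chi_\eta \in \Irr(G)$ and $m_\eta \in \mathbb{N}$ with $(\chi_\eta)_K = m_\eta \eta$. Restricting once more, from $K$ down to $H$, gives $(\chi_\eta)_H = m_\eta\,\eta_H$. Now Condition~(b)(i) for the pair $(G, H)$ applied to $\chi_\eta$ yields $[(\chi_\eta)_H, 1_H] \neq 0$, so that $m_\eta\,[\eta_H, 1_H] \neq 0$; since $m_\eta$ is a positive integer this forces $[\eta_H, 1_H] \neq 0$. As $\eta$ was arbitrary, this is precisely Condition~(b)(i) for the pair $(K, H)$.

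For part~(ii), I would take arbitrary $\varphi, \psi \in \Irr(H)$ and invoke Condition~(b)(ii) for $(G, H)$ to obtain $[\varphi^G, \psi^G] \neq 0$, i.e.\ a common irreducible constituent $\chi \in \Irr(G)$ of $\varphi^G$ and $\psi^G$; by Frobenius reciprocity $[\varphi, \chi_H] \neq 0 \neq [\psi, \chi_H]$. The hypothesis now supplies $\eta_\chi \in \Irr(K)$ and $n_\chi \in \mathbb{N}$ with $\chi_K = n_\chi \eta_\chi$, and hence $\chi_H = n_\chi\,(\eta_\chi)_H$ by transitivity of restriction. Feeding this into the two nonvanishing products and cancelling the positive factor $n_\chi$ gives $[\varphi, (\eta_\chi)_H] \neq 0 \neq [\psi, (\eta_\chi)_H]$, which by Frobenius reciprocity reads $[\varphi^K, \eta_\chi] \neq 0 \neq [\psi^K, \eta_\chi]$. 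Thus $\eta_\chi$ is a common constituent of $\varphi^K$ and $\psi^K$, so $[\varphi^K, \psi^K] \neq 0$, as required.

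I do not expect a genuine obstacle here: each part collapses to a single application of Frobenius reciprocity and the transitivity of restriction. The one point deserving care---and the reason the hypotheses are stated as they are---is the \emph{purity} of the restrictions. If $(\chi_\eta)_K$ in part~(i) (respectively $\chi_K$ in part~(ii)) were allowed to contain further irreducible constituents, then the nonvanishing inner product over $H$ could be supplied entirely by those extra constituents, and one could not conclude anything about $\eta$ (respectively $\eta_\chi$). I would therefore emphasise that it is exactly this purity hypothesis---that each relevant restriction is a multiple of a single irreducible rather than a general sum of irreducibles---which lets a statement about the $H$-constituents of $G$-characters be pushed down to the corresponding statement for $K$.
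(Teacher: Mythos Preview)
Your proof is correct and follows essentially the same route as the paper: in each part you use transitivity of restriction together with the purity hypothesis to reduce the relevant inner product over $H$ to one involving a single irreducible of $K$, then apply Frobenius reciprocity. The paper's argument is written out in exactly this way, only more tersely and without your concluding remarks on why purity is essential.
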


\begin{proof}
(i) Let $\eta \in \Irr(K)$, and let $m_\eta$ and $\chi_\eta$ be as above.
Then Condition~(b)(i) for $(G, H)$ implies:
$0 < [(\chi_\eta)_H, 1_H] = [m_\eta \eta_H, 1_H]$,
so that $[\eta_H, 1_H] > 0$.
Thus Condition~(b)(i) is satisfied for $(K, H)$.

(ii) Let $\varphi, \psi \in \Irr(H)$.
Then, by Condition~(b)(ii) for $(G, H)$,
$\varphi^G$ and $\psi^G$ have a common constituent $\chi$.
Let $n_\chi$ and $\eta_\chi$ be as above.
Then we have
$0 < [\chi, \varphi^G] = [\chi_H, \varphi] = [n_\chi (\eta_\chi)_H, \varphi]
   = n_\chi [\eta_\chi, \varphi^K]$,
so that $[\eta_\chi, \varphi^K] > 0$ and, similarly,
$[\eta_\chi, \psi^K] > 0$.
We conclude that $[\varphi^K, \psi^K] > 0$,
and Condition~(b)(ii) is satisfied for $(K, H)$.
\end{proof}

\begin{rem}
(a) We note that the hypothesis of~(i) is satisfied
in the special case where every irreducible character of $K$
extends to an (irreducible) character of $G$.
In this case, $K$ is sometimes called a CR-subgroup of $G$
where CR stands for ``character restriction'';
see for example~\cite{Isaacs}.
In particular,
(i) applies whenever $K$ has a normal complement in $G$.

(b) Similarly, the hypothesis in~(ii) is satisfied in the special case
where every irreducible character of $G$ restricts to
an irreducible character of $K$.
\end{rem}

\begin{lem}\label{lem:extendcentre}
Let $Z$ be a central subgroup of a finite group $K$,
and suppose that we have subgroups $1 < H < G \leq K = GZ$.
Then $H$ is rich in $G$ if and only if $H$ is rich in $K$,
and the diameter of $\Gamma(G, H)$ is $3$ if and only if
the diameter of $\Gamma(K, H)$ is $3$.
\end{lem}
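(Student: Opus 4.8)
The plan is to reduce everything to Proposition~\ref{diam3_equiv} by showing that each of its conditions (b)(i) and (b)(ii) holds for the pair $(G,H)$ if and only if it holds for $(K,H)$; recall that (b)(i) is precisely richness. The whole argument rests on two structural facts coming from $K = GZ$ with $Z$ central. First, $G \trianglelefteq K$ and the conjugation action of $K$ on the class functions of $G$ is trivial: for $k = gz$ with $g \in G$, $z \in Z$, conjugation by $k$ agrees with conjugation by $g$ (as $z$ is central), and inner automorphisms fix every class function of $G$. Second, restriction from $K$ to $G$ preserves irreducibility.

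For the second fact, let $\chi \in \Irr(K)$ be afforded by a $K$-module $V$. Since $Z$ is central, it acts on $V$ by scalars; as $K = GZ$, every $G$-submodule of $V$ is automatically $Z$-stable, hence $K$-stable, so $V$ is irreducible as a $G$-module and $\chi_G \in \Irr(G)$. This gives the richness equivalence in one direction at once: for $\chi \in \Irr(K)$ we have $\chi_H = (\chi_G)_H$ because $H \leq G$, so $[\chi_H, 1_H] = [(\chi_G)_H, 1_H]$, which is nonzero whenever $H$ is rich in $G$. For the converse I first observe that restriction is surjective onto $\Irr(G)$: given $\psi \in \Irr(G)$, take any irreducible constituent $\chi$ of $\psi^K$; then $[\psi, \chi_G] = [\psi^K, \chi] > 0$ by Frobenius reciprocity, and since $\chi_G$ is irreducible we must have $\chi_G = \psi$. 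The same identity $\psi_H = \chi_H$ now transfers richness back from $K$ to $G$. Hence condition (b)(i) holds for $(G,H)$ if and only if it holds for $(K,H)$.

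It remains to transfer condition (b)(ii). For $\varphi, \psi \in \Irr(H)$ I will establish $[\varphi^K, \psi^K]_K = |K\spl G|\,[\varphi^G, \psi^G]_G$, which makes the two nonvanishing conditions equivalent. Using transitivity of induction, $\varphi^K = (\varphi^G)^K$, together with Frobenius reciprocity, $[\varphi^K,\psi^K]_K = [\varphi^G, (\psi^K)_G]_G$. Since $G \trianglelefteq K$, Mackey's formula yields $(\psi^K)_G = \sum_{t} {}^t(\psi^G)$ over a transversal $t$ of $G$ in $K$; by the first structural fact each summand equals $\psi^G$, so $(\psi^K)_G = |K\spl G|\,\psi^G$ and the claimed identity follows. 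Combining the two equivalences with Proposition~\ref{diam3_equiv}, which applies because $H$ is nontrivial and proper in both $G$ and $K$, gives the full statement. The only genuinely delicate points are the two structural facts—irreducibility of $\chi_G$ and triviality of the $K$-conjugation on $\Irr(G)$—both of which hinge on $Z$ acting by scalars on irreducible modules; once these are in place, the remainder is routine Clifford-theoretic bookkeeping.
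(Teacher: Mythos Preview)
Your proof is correct. The key observation---that every $\chi \in \Irr(K)$ restricts irreducibly to $G$, and that restriction $\Irr(K) \to \Irr(G)$ is surjective---is exactly what the paper uses, phrased there as ``the irreducible characters of $K$ are extensions of the irreducible characters of $G$.''

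The only difference is packaging: the paper handles the direction from $(G,H)$ to $(K,H)$ by invoking Proposition~\ref{reduction}, and the reverse direction by invoking Lemma~\ref{lem:godown} (whose hypotheses are precisely the extension/restriction facts above), whereas you argue both directions directly. Your explicit identity $[\varphi^K,\psi^K]_K = [K\spl G]\,[\varphi^G,\psi^G]_G$, obtained via Mackey and the triviality of the $K$-action on class functions of $G$, is a clean way to get both implications for condition~(b)(ii) at once; the paper instead treats the two directions asymmetrically through the two cited results. Substantively the arguments coincide.
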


\begin{proof}
The direction from $(G, H)$ to $(K, H)$ follows from
Proposition~\ref{reduction}.

For the other direction,
it is easy to see that the irreducible characters of $K$ are extensions
of the irreducible characters of $G$.
(Note that $K$ is isomorphic to a quotient of $G \times Z$.)
Now apply Lemma~\ref{lem:godown}.
% Now let $\chi \in \Irr(G)$, and let $\eta \in \Irr(K)$ be an extension
% of $\chi$.
% If $H$ is rich in $K$ then
% we have $[\chi_H, 1_H] = [\eta_H, 1_H] \neq 0$.
% Let $\varphi, \psi \in \Irr(H)$.
% If $\Gamma(K, H)$ has diameter $3$ then
% the induced characters $\varphi^K, \psi^K$ have a common constituent
% $\chi \in \Irr(K)$.
% Then $[\chi_G, \varphi^G] = [\chi_H, \varphi] = [\chi, \varphi^K] \neq 0$
% and, similarly,
% $[\chi_G, \psi^G] \neq 0$.
% Thus $\chi_G \in \Irr(G)$ is a common constituent of $\varphi^G$ and $\psi^G$.
\end{proof}

\begin{prop}\label{prop:isoclinic}
Let $H_1$ be a proper subgroup of a finite group $G_1$,
and suppose that $H_1$ is rich in $G_1$
(or $\Gamma(G_1, H_1)$ has diameter $3$).
Moreover, let $G_2$ be a finite group which is isoclinic to $G_1$.
Then there exists a proper subgroup $H_2$ of $G_2$
such that $H_2$ is rich in $G_2$
(or $\Gamma(G_2, H_2)$ has diameter $3$).
\end{prop}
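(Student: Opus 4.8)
The plan is to reduce the problem to a rich subgroup of prime order and then transport richness through the common central cover of the two isoclinic groups, using a twisting trick that neutralises the central characters.

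First I would reduce the hypothesis. By Proposition~\ref{diam3_equiv}, in either case $H_1$ is rich in $G_1$, and then Proposition~\ref{reduction} (applied with $K=G_1$ and a prime-order subgroup $L\le H_1$) shows that $G_1$ contains a rich subgroup $\langle x\rangle$ of prime order $p$. By Corollary~\ref{cor_easy}(iii) richness forces $x\in G_1'$, a point that will be decisive. Conversely, once I produce a rich subgroup $\langle y\rangle$ of prime order in $G_2$ it is automatically a diameter three subgroup: it is core-free, hence not normal, so some conjugate meets it trivially and Corollary~\ref{cond_ii_satisfied}(i) gives condition~(b)(ii), while richness is condition~(b)(i). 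Thus it suffices to find a nontrivial $y\in G_2$ such that $1$ is an eigenvalue of $\rho(y)$ for every irreducible representation $\rho$ of $G_2$; this is exactly richness of $\langle y\rangle$, since $[\chi_{\langle y\rangle},1_{\langle y\rangle}]$ is the dimension of the $1$-eigenspace of $y$ on $\chi$.

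Next I set up the common cover. Writing $Z_i=Z(G_i)$ and letting $(\alpha,\beta)$ with $\alpha\colon G_1/Z_1\to G_2/Z_2$ and $\beta\colon G_1'\to G_2'$ realise the isoclinism, I put
\[
  L=\{(g_1,g_2)\in G_1\times G_2 : \alpha(g_1Z_1)=g_2Z_2\}.
\]
The projections $\pi_i\colon L\to G_i$ are surjective with central kernels $B:=\ker\pi_1=1\times Z_2$ and $A:=\ker\pi_2=Z_1\times 1$, and $A\cap B=1$. The compatibility of $\alpha$ and $\beta$ via the commutator maps shows that $L'$ is exactly the graph $\{(c,\beta(c)):c\in G_1'\}$, so that $L'\cap A=L'\cap B=1$. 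The $B$-trivial irreducible characters of $L$ are the inflations of $\Irr(G_1)$, and the $A$-trivial ones are the inflations of $\Irr(G_2)$.

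The heart of the argument is the choice of element together with a twisting step. Since $x\in G_1'$ one checks $\alpha(xZ_1)=\beta(x)Z_2$, so $\tilde x:=(x,\beta(x))$ lies in $L$, and in fact in $L'$; here $\pi_1(\tilde x)=x$ and $\pi_2(\tilde x)=\beta(x)=:y\neq 1$. I claim every $\Xi\in\Irr(L)$ has eigenvalue $1$ at $\tilde x$. Given $\Xi$, let $\nu\in\Irr(B)$ be the scalar by which $B$ acts. Because $B\cap L'=1$, the subgroup $B$ embeds into $L/L'$ and restriction of linear characters yields a linear character $\ell$ of $L$ with $\ell|_B=\bar\nu$; then $\Xi\otimes\ell$ is trivial on $B$, hence is the inflation of some $\chi\in\Irr(G_1)$, and $(\Xi\otimes\ell)(\tilde x)=\chi(x)$ has eigenvalue $1$ by richness of $\langle x\rangle$. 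Since $\ell$ is a linear character and $\tilde x\in L'$, we have $\ell(\tilde x)=1$, so $\Xi(\tilde x)$ and $(\Xi\otimes\ell)(\tilde x)$ have the same eigenvalues, proving the claim. Applying this to the $A$-trivial characters shows that every $\psi\in\Irr(G_2)$ has eigenvalue $1$ at $y$; as $G_2$ is non-abelian (its derived subgroup $G_2'\cong G_1'\ni x$ is nontrivial), $H_2:=\langle y\rangle$ is a proper nontrivial subgroup, and it is rich in $G_2$, which by the first paragraph also makes it a diameter three subgroup.

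The step I expect to be the main obstacle is precisely the interference of the central characters. The naive lift of $\langle x\rangle$ into $L$ contains the central subgroup $B$, so it can never be rich, and a genuine complement to $B$ need not exist because the relevant central extension may fail to split. The twisting trick circumvents this completely; its essential input is that richness forces $x\in G_1'$, so that $\tilde x\in L'$ and every linear character of $L$ takes the value $1$ on $\tilde x$. Verifying that $\tilde x$ indeed lies in $L'$ (via the commutator compatibility of $\alpha$ and $\beta$) and that the twisting character $\ell$ exists are the two places I would be most careful.
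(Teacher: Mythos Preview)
Your proof is correct. The paper takes a somewhat different route: instead of the fibred product $L$ (a common central \emph{cover} of $G_1$ and $G_2$), it invokes Conway's characterisation of isoclinism, which furnishes a common \emph{container} $K$ with $G_1, G_2 \leq K$, $G_1' = G_2'$ and $K = G_i Z_i$ for central subgroups $Z_i$. Since $H_1 \leq G_1' = G_2'$ by Corollary~\ref{cor_easy}(iii), one simply sets $H_2 := H_1$ (viewed now as a subgroup of $G_2$) and applies Lemma~\ref{lem:extendcentre} twice to transfer richness (respectively diameter~$3$) from $G_1$ to $K$ and then from $K$ to $G_2$. This is shorter because the character-theoretic work is already packaged in Lemmas~\ref{lem:godown} and~\ref{lem:extendcentre}, it avoids the preliminary reduction to prime order, and it identifies $H_2$ with $H_1$ itself rather than with a cyclic subgroup. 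Your argument, by contrast, works directly from the defining data $(\alpha,\beta)$ of the isoclinism without citing the Conway--Hekster embedding, and the twisting step exploiting $\tilde{x}\in L'$ is an explicit, self-contained substitute for Lemma~\ref{lem:extendcentre}.
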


\begin{proof}
Since $G_1$ and $G_2$ are isoclinic,
there exists a finite group $K$ containing subgroups isomorphic to $G_1$
and $G_2$ (which we identify with $G_1$ and $G_2$)
such that $G_1' = G_2'$ and $K = G_i Z_i$ with a central subgroup $Z_i$
of $K$, for $i = 1, 2$.
(This characterization of isoclinism is often attributed to Conway,
see~\cite[Section~6.7]{CCN85}.
A proof that it is equivalent to the usual definition can be found
in~\cite[Theorem~4.2]{Hekster}.)

We note that $H_1$ is contained in $G_1' = G_2'$
by Corollary~\ref{cor_easy}~(iii).
Thus we can view $H_2 := H_1$ as a subgroup of $G_2$ as well.
Now we apply Lemma~\ref{lem:extendcentre} twice:
If $\Gamma(G_1, H_1)$ has diameter $3$
then $\Gamma(K, H_1)$ has diameter $3$,
and therefore $\Gamma(G_2, H_1)$ has diameter $3$.
\end{proof}

%%%%%%%%%%%%%%%%%%%%%%%%%%%%%%%%%%%%%%%%%%%%%%%%%%%%%%%%%%%%%%%%%%%%%%%%%%%%%
\section{Examples}\label{sect:examples}

\begin{example}\label{expl_frob}
Let $G:= \AGL(1, p^n)$ be the affine group of degree $1$ over a field
with $p^n$ elements where $p$ is a prime and $n \geq 2$ is an integer.
Then $G$ is a Frobenius group with an elementary abelian kernel $E$
of order $p^n$ and a cyclic complement $C$ of order $p^n-1$.
Moreover, $\Irr(G)$ consists of $p^n-1$ linear characters
and one faithful character $\chi$ of degree $p^n-1$
(cf.~\cite[Satz~V.16.13]{HuppertI}).
Let $H$ be a subgroup of order $p$ in $G$.
The linear characters of $G$ are trivial on $E$ and thus on $H$,
and $\chi_E$ is the sum of all nontrivial irreducible characters of $E$.
Since $n \geq 2$,
every irreducible character of $H$ is a constituent of $\chi_H$.
Thus $H$ is rich in $G$.
Since also condition~(b)~(ii) is satisfied by
Corollary~\ref{cond_ii_satisfied}~(i),
$\Gamma(G, H)$ has diameter $3$.
\end{example}

\begin{example}\label{expl_frob_sub}
Let $G$ be the subgroup of order $p^2 d$ in $\AGL(1, p^2)$,
where $d > 1$ and $d$ divides $p^2-1$.
As in Example~\ref{expl_frob}, $G$ is a Frobenius group
with an elementary abelian kernel $E$
of order $p^2$ and a cyclic complement $C$ of order $d$.
Then $G$ has a diameter three subgroup (necessarily of order $p$)
if and only if $d$ is divisible by $(p+1) (p-1)_2$,
where $(p-1)_2$ is the $2$-part of $p-1$.
Equivalently, this happens if $(p^2-1)/d$ is an odd divisor of $p-1$.

First note that $d$ is divisible by $(p+1) (p-1)_2$ if and only if
all $p+1$ subgroups of order $p$ in $E$ are conjugate in $G$.
Note that $|N_C(H)| = \gcd(d, p-1)$, where $H$ is a subgroup of order $p$.
In order to see this, observe that on the one hand,
if $d$ is divisible by $(p+1) (p-1)_2$ then $\gcd(d, p-1) = d/(p+1)$,
so $|C/N_C(H)| = p+1$ holds,
and on the other hand,
if there is only one class of order $p$ subgroups in $G$
then $|N_C(H)| = d/(p+1)$, which means that $(p-1)_2$ divides $d/(p+1)$.

Let $H$ be a subgroup of order $p$ in $E$.
Then $G$ has $(p^2-1)/d$ nonlinear irreducible characters of degree $d$.
The restriction of each such character to $E$ is a sum of $d$
nontrivial irreducibles of $E$.

If there is only one class of order $p$ subgroups in $G$ then
the transitive action of $C$ on these $p+1$ subgroups means that
each $C$-orbit of nontrivial irreducibles of $E$ contains at least one
character with kernel $H$, thus $H$ is a diameter three subgroup of $G$.

Conversely, let $H_1, H_2, \ldots, H_k$ be representatives of conjugacy
classes of subgroups of order $p$ in $G$.
Each $C$-orbit of nontrivial irreducibles of $E$ contains characters
with kernels only from one class of subgroups of order $p$.
Considering an orbit containing no character with kernel $H_i$ yields
that $H_i$ cannot be a diameter three subgroup.
\end{example}

\begin{example}\label{expl_frob_sub_n}
Let $G$ be the subgroup of order $p^n d$ in $\AGL(1, p^n)$,
where $d$ divides $p^n-1$.
Then $G$ acts on the set of $(p^n-1)/(p-1)$ subgroups of order $p^{n-1}$
in $G$ by conjugation.
They are the kernels of the nontrivial characters of the normal subgroup $E$
of order $p^n$ in $G$.
Let $U$ be a subgroup of order $p^{n-1}$ in $G$.
Its normalizer has order $p^n \gcd(d, p-1)$.
Thus $U$ has precisely $d / \gcd(d, p-1)$ conjugates in $G$.
Hence there are precisely $(p^n-1) \gcd(d, p-1)/((p-1) d)$
conjugacy classes of subgroups of order $p^{n-1}$ in $G$.
If we can choose one subgroup from each of these conjugacy classes
such that their intersection is nontrivial
then this intersection contains a diameter three subgroup of order $p$ in $G$.
Since the intersection of $k$ subgroups of order $p^{n-1}$ yields
a subgroup of order at least $p^{n-k}$,
such a choice is possible if $n > (p^n-1) \gcd(d, p-1)/((p-1) d)$.

This implies that if $n \geq 3$ and $p$ is odd then
the subgroup of index $2$ in $\AGL(1, p^n)$
has a diameter three subgroup of order $p$.
Example~\ref{expl_frob_sub} shows that this is not the case for $n = 2$.

(The bound is not sharp.
For example, the subgroup of order $7^3 \cdot 19$ in $\AGL(1,7^3)$
has $3$ classes of subgroups of order $7^2$,
and still has a diameter three subgroup.)
\end{example}

\begin{example}\label{simple_group}
Let $H$ be a subgroup of order $2$ in a nonabelian finite simple group $G$.
Then $\Gamma(G, H)$ has diameter $3$.
In order to see this, we check that for any $\chi \in \Irr(G)$,
$[\chi_H, 1_H] \neq 0$ holds.
Write $H = \{1, h\}$.
If $\chi \in \Irr(G)$ satisfies
$[\chi_H, 1_H] = 0$ then $\chi(h) = -\chi(1)$,
and $h$ is contained in $Z(\chi)$, the center of $\chi$.
In particular, we have $Z(\chi) \neq 1$.
Since $G$ is simple, this implies $Z(\chi) = G$.
Since $Z(\chi)/ \ker(\chi)$ is cyclic we conclude that $\ker(\chi) = G$.
Thus $\chi = 1_G$ which, however, is impossible.
\end{example}

\begin{rem}
Example~\ref{simple_group} shows that nonabelian finite simple groups
always have diameter three subgroups.
This fact does not generalize to quasisimple finite groups;
in fact, the quasisimple group $\SL(2, 5) = 2.A_5$ does not have a
diameter three subgroup.
This can be seen by noting that $\SL(2, 5)$ is a Frobenius complement;
thus it cannot have nontrivial rich subgroups,
by~\cite[(25.5)]{Feit}.
% (A group is a Frobenius complement if and only if
% it has a fixed point free representation over a finite field
% in coprime characteristic.)

See Section~\ref{sect:quasisimple} for more about quasisimple groups.
\end{rem}

\begin{prop}\label{simplegroup}
Every nonabelian finite simple group has a solvable subgroup
which contains a diameter three subgroup of order $2$.
\end{prop}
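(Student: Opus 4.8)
The plan is to realize the order-two diameter three subgroup inside a \emph{Frobenius} group, mimicking Example~\ref{expl_frob}. The cleanest route is to reduce the statement to a single containment: \textit{every nonabelian finite simple group $G$ contains a copy of $\AGL(1,2^n)$ for some $n \geq 2$.} Once this is known, Example~\ref{expl_frob} (applied with $p = 2$) immediately supplies, inside the solvable group $\AGL(1,2^n)$, an order-two subgroup $\langle t\rangle$ that is rich and for which $\Gamma(\AGL(1,2^n),\langle t\rangle)$ has diameter $3$, which is exactly what is wanted. To keep later case distinctions flexible I would also prove a slightly more general criterion: if $K = N \rtimes \langle c\rangle$ is a Frobenius group whose kernel $N$ is a (nontrivial) $2$-group and whose complement $\langle c\rangle$ has odd prime order $r$, then for any involution $t \in Z(N)$ the subgroup $\langle t\rangle$ is a diameter three subgroup of the solvable group $K$. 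Indeed, since $c$ acts fixed-point-freely on $N$, the orbit of $t$ has size $r > 1$, so $\langle t\rangle$ is not normal, two of its conjugates meet trivially, and Condition~(b)(ii) of Proposition~\ref{diam3_equiv} holds by Corollary~\ref{cond_ii_satisfied}~(i). It then remains to verify richness, i.e. Condition~(b)(i).

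The richness computation is the soft core of the argument, and I would carry it out as follows. Each character inflated from $K/N$ is linear and contains $N \ni t$ in its kernel, hence restricts to a nonzero multiple of $1_{\langle t\rangle}$. By the character theory of Frobenius groups, every remaining irreducible character has the form $\chi = \theta^K$ with $1_N \neq \theta \in \Irr(N)$, because $\langle c\rangle$ permutes $\Irr(N)\setminus\{1_N\}$ freely and each such $\theta$ therefore has inertia group $N$. Writing $O$ for the $\langle c\rangle$-orbit of $t$ (of odd size $r$) and $\nu$ for the linear central character through which $\theta$ acts on $Z(N)$, the induction formula for an element of a normal subgroup gives $\chi(t) = \theta(1)\sum_{s\in O}\nu(s)$ with each $\nu(s)\in\{\pm1\}$. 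Now $\sum_{s\in O}s$ is fixed by $c$, hence is the identity of $N$ since $c$ is fixed-point-free, so $\prod_{s\in O}\nu(s) = \nu\!\bigl(\sum_{s\in O}s\bigr) = 1$; as $|O| = r$ is odd, the $\nu(s)$ cannot all equal $-1$, whence $\chi(t) > -r\,\theta(1) = -\chi(1)$ and $[\chi_{\langle t\rangle},1_{\langle t\rangle}]\neq0$. This establishes richness, and combined with the previous paragraph shows that $\Gamma(K,\langle t\rangle)$ has diameter $3$.

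The remaining, and genuinely hard, step is the existence statement, and this is where I expect to invoke the classification of finite simple groups. For groups of Lie type in characteristic $2$ one takes a Borel subgroup, which contains $\AGL(1,2^f)=2^f\spl(2^f-1)$ with elementary abelian kernel. The Suzuki groups $\Sz(q)$ must be singled out, together with the characteristic-$2$ groups, because $|\Sz(q)|$ is coprime to $3$; here one uses $Z(Q)\rtimes C \cong \AGL(1,q)$ inside the normalizer of a Sylow $2$-subgroup $Q$, where $Z(Q)$ is elementary abelian of order $q\geq 8$ and $C$ is cyclic of order $q-1$ acting regularly on $Z(Q)\setminus\{1\}$. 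Every other nonabelian simple group has order divisible by $3$ and contains a four-group whose three involutions are $3$-cycled by an element of order $3$, i.e. a copy of $A_4 = \AGL(1,4)$. Verifying this last embedding family by family — the alternating groups, the odd-characteristic groups of Lie type (via Dickson's subgroup list for $\PSL(2,q)$ and its overgroups), and the sporadic groups (via their known maximal subgroups) — is the main obstacle; the generalized Frobenius criterion of the second paragraph is kept in reserve in case the most convenient $2$-local subgroup of some family has a non-elementary-abelian $2$-kernel, though in every case considered the subgroup produced is in fact an $\AGL(1,2^n)$ and Example~\ref{expl_frob} alone suffices.
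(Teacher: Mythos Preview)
Your proposal is correct and ends up proving the same containment statement the paper does --- every nonabelian finite simple group has a solvable subgroup isomorphic to some $\AGL(1,2^n)$ with $n\ge 2$ --- but you reach it by a different and heavier route. The paper short-circuits the family-by-family analysis by quoting the main result of~\cite{BW}: every nonabelian finite simple group contains a \emph{minimal simple group}, so it suffices to treat Thompson's five families $\PSL(2,p)$, $\PSL(2,2^q)$, $\PSL(2,3^q)$, $\PSL(3,3)$, $\Sz(2^q)$. For the first, third, and fourth families the paper exhibits an $A_4$; for $\PSL(2,2^q)$ the Borel subgroup is $\AGL(1,2^q)$; for $\Sz(2^q)$ the center of a Sylow $2$-subgroup together with its normalizer yields $\AGL(1,2^q)$, exactly as in your treatment. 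Your approach instead runs directly through the classification of finite simple groups, which forces you to verify the $A_4$ (or $\AGL(1,2^f)$) embedding across all Lie-type families and all sporadic groups; this is doable but considerably more laborious than the five-case check the paper needs. Both arguments ultimately rely on the classification (the result of~\cite{BW} does too), so neither is ``more elementary'' in that sense, but the reduction to minimal simple groups buys a much shorter case analysis.

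Your auxiliary criterion --- that in a Frobenius group $K = N\rtimes\langle c\rangle$ with $N$ an arbitrary $2$-group and $|c|$ an odd prime, any central involution of $N$ generates a diameter three subgroup --- is a genuine strengthening of Example~\ref{expl_frob}, and your parity argument via $\prod_{s\in O}\nu(s)=1$ is neat. As you yourself observe, however, this extra generality is never invoked: in every case the subgroup produced is an honest $\AGL(1,2^n)$, so Example~\ref{expl_frob} alone suffices, and the paper indeed makes no use of anything stronger.
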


\begin{proof}
It is known (cf.~the main result of~\cite{BW})
that every nonabelian finite simple group contains a minimal simple group,
i.~e., a nonabelian simple group all of whose proper subgroups are solvable.
Thus it suffices to prove that every minimal simple group
has a proper subgroup which contains a diameter three subgroup of order $2$.
The minimal simple groups were classified by Thompson;
they are given as follows, cf.~\cite[Bemerkung II.7.5]{HuppertI}:

\[
   \begin{array}{ll}
    \PSL(2, p),
       & \textrm{$p > 3$ a prime with $p^2-1 \not\equiv 0 \pmod{5}$}, \\
    \PSL(2, 2^q), & \textrm{$q$ a prime}, \\
    \PSL(2, 3^q), & \textrm{$q$ an odd prime}, \\
    \PSL(3, 3), & \\
    \Sz(2^q), & \textrm{$q$ an odd prime}.
   \end{array}
\]

By part~(4) of \cite[Satz II.8.27]{HuppertI},
the groups $\PSL(2, p)$, where $p > 3$ is a prime,
and the groups $\PSL(2, 3^q)$ contain subgroups isomorphic to the alternating
group $A_4$ which has a diameter three subgroup of order $2$.
Part~(7) of the same theorem yields Frobenius groups with Frobenius kernel
of $2$-power order, as in Example~\ref{expl_frob},
as subgroups of $\PSL(2, 2^q)$.
The question about $\PSL(3, 3)$ can be answered computationally;
this group has maximal subgroups of the type $S_4$,
and hence subgroups isomorphic to $A_4$.
Finally, the Suzuki group $G = \Sz(2^q)$ has a Sylow $2$-subgroup $P$
of order $2^{2q}$, with an elementary abelian center $Z$ of order $2^q$
(cf.~\cite[Lemma XI.3.1]{HBIII}).
The normalizer of $P$ in $G$ is a semidirect product of $P$ with a cyclic
group of order $2^q-1$,
it contains a subgroup $S$ which is the semidirect product of $Z$ with
the cyclic group of order $2^q-1$.
This group $S$ is a Frobenius group with Frobenius kernel of $2$-power order,
as in Example~\ref{expl_frob}.
\end{proof}

%%%%%%%%%%%%%%%%%%%%%%%%%%%%%%%%%%%%%%%%%%%%%%%%%%%%%%%%%%%%%%%%%%%%%%%%%%%%%
\section{Large rich subgroups}\label{sect:large}

Part~(i) of Corollary~\ref{cor_easy} implies that
rich subgroups must be ``small'',
Here is a quantitative version of this statement.

\begin{prop}\label{sqrtbound}
Let $T(G) = \sum_{\chi} \chi(1)$, where $\chi$ runs over $\Irr(G)$,
$k(G) = |\Irr(G)|$,
and $b(G) = \max\{\chi(1); \chi\in\Irr(G) \}$.

(i) We have
\begin{eqnarray*}
   |G| & = & \sum_{\chi\in\Irr(G)} \chi(1)^2 \\
       & = & \sum_{\chi\in\Irr(G)} \chi(1) \cdot b(G) - 
          \sum_{\chi\in\Irr(G)} \chi(1) \cdot \left( b(G)-\chi(1) \right) \\
       & = & T(G) \cdot b(G) - [G:G'] \cdot \left( b(G)-1 \right) -
          \sum_{1 < \chi(1) < b(G)} \chi(1) \cdot \left( b(G)-\chi(1) \right).
\end{eqnarray*}
(ii) We have
\[
   |G| \leq T(G) \cdot b(G) \leq T(G) \cdot \left(T(G) - k(G) + 1\right).
\]
Moreover, we have $|G| = T(G) b(G)$ if and only if $G$ is abelian.

(iii) If $1 \not= H \leq G$ is rich in $G$ then $|H| \leq [G:H] - k(G) + 1$
and $|H| < b(G)$.
In particular, $|H| < \sqrt{|G|}$.
\end{prop}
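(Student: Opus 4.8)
The plan is to prove the three parts in order, deriving parts (ii) and (iii) as consequences of the single identity established in (i). For part~(i), the first equality is the standard formula $|G| = \sum_{\chi \in \Irr(G)} \chi(1)^2$. The second is the termwise algebraic identity $\chi(1)^2 = \chi(1) \cdot b(G) - \chi(1)(b(G) - \chi(1))$ summed over $\Irr(G)$, the first resulting sum being $b(G) \sum_\chi \chi(1) = T(G) b(G)$. For the third equality I would split the sum $\sum_\chi \chi(1)(b(G) - \chi(1))$ according to the degree of $\chi$: the $[G:G']$ linear characters each contribute $b(G) - 1$, giving the term $[G:G'](b(G)-1)$; the characters of degree $b(G)$ contribute $0$; and the characters with $1 < \chi(1) < b(G)$ give exactly the displayed residual sum.

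For part~(ii), I would observe that in the identity from~(i) both subtracted quantities are nonnegative, since $[G:G'] \geq 1$, $b(G) \geq 1$, and $\chi(1)(b(G)-\chi(1)) > 0$ whenever $1 < \chi(1) < b(G)$; this yields $|G| \leq T(G) b(G)$ at once. The bound $b(G) \leq T(G) - k(G) + 1$ comes from $T(G) = \sum_\chi \chi(1) \geq b(G) + (k(G)-1)$, since one character has degree $b(G)$ while the remaining $k(G) - 1$ each have degree at least $1$; multiplying through by $T(G) > 0$ gives the second inequality. Equality $|G| = T(G) b(G)$ forces $[G:G'](b(G)-1) = 0$, hence $b(G) = 1$, i.e.\ every irreducible character is linear and $G$ is abelian; the converse is immediate.

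For part~(iii), I would first record that richness gives $[G:H] \geq T(G)$ by Corollary~\ref{cor_easy}(i). Since $H \neq 1$ is rich, $G$ cannot be abelian (abelian, indeed supersolvable, groups admit no nontrivial rich subgroup by Corollary~\ref{cor:supersolvable}), so $b(G) > 1$ and part~(ii) supplies the \emph{strict} inequality $|G| < T(G) b(G)$. Dividing by $[G:H] \geq T(G)$ then gives $|H| = |G|/[G:H] < T(G) b(G)/T(G) = b(G)$, which is the claim $|H| < b(G)$. Combining $|H| < b(G)$ with $[G:H] \geq T(G) \geq b(G) + k(G) - 1$ yields $|H| < b(G) \leq [G:H] - k(G) + 1$, hence $|H| \leq [G:H] - k(G) + 1$. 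Finally, as $k(G) \geq 1$ we get $|H| < b(G) \leq [G:H]$, so $|H|^2 < |H| \cdot [G:H] = |G|$ and therefore $|H| < \sqrt{|G|}$.

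The only genuinely nonroutine point is securing the \emph{strict} inequality $|G| < T(G)b(G)$, which is what upgrades the easy bound $|H| \leq b(G)$ to $|H| < b(G)$ and thereby forces $|H| < \sqrt{|G|}$; this rests on the fact that a group admitting a nontrivial rich subgroup must be nonabelian, provided by Corollary~\ref{cor:supersolvable}. Everything else is bookkeeping with character degrees.
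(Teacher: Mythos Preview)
Your proof is correct and follows essentially the same route as the paper's (very terse) argument, which simply says that (i) is clear, (ii) follows from (i), and (iii) follows from (ii) together with $T(G)\leq[G:H]$ from Corollary~\ref{cor_easy}. One small remark: to obtain that $G$ is nonabelian you invoke Corollary~\ref{cor:supersolvable}, but this is more than needed---Corollary~\ref{cor_easy}(ii) already gives that a nontrivial rich subgroup is core-free, hence cannot live in an abelian group, so the strict inequality $|G|<T(G)b(G)$ is available without appealing to supersolvability.
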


\begin{proof}
Part~(i) is clear, part~(ii) follows easily from part~(i),
and part (iii) follows from~(ii) since $T(G) \leq [G:H]$ by
Corollary~\ref{cor_easy},
% In~(ii), we have
% \[
%   |G| = \sum_{\chi\in\Irr(G)} \chi(1)^2
%       \leq \sum_{\chi\in\Irr(G)} \chi(1) \cdot b(G) = T(G) \cdot b(G)
% \]
% and $b(G) \leq T(G) - (k(G)-1)$,
% and equality holds if and only if $\psi(1) = 1$ for all $\psi\in\Irr(G)$.
% 
% Part (iii) follows from~(ii) since $T(G) \leq [G:H]$ by
% Corollary~\ref{cor_easy},
\end{proof}

\begin{prop}\label{no-prime-power-index}
Let $H$ be a rich subgroup in a finite group $G$,
and suppose that $[G:H]$ is a power of a prime $p$.
Then $H = 1$.
\end{prop}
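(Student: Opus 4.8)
The plan is to exploit the two reduction results proved above---Lemma~\ref{lem:reduction} and Corollary~\ref{cor:supersolvable}---to first show that $H$ must be a $p'$-group that is complemented by a Sylow $p$-subgroup, and then to extract the conclusion from the permutation character $1_H^G$ restricted to that Sylow subgroup.

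First I would set up the factorisation $G = HP$. Write $[G:H] = p^a$; since $H$ is rich, hence proper, we have $a \geq 1$. Choose a Sylow $p$-subgroup $P$ of $G$ whose intersection with $H$ is a Sylow $p$-subgroup of $H$. A direct order count, using $|HP| = |H|\,|P|/|H \cap P|$, then gives $|HP| = |G|$, so $G = HP$. Because $a \geq 1$, the subgroup $P$ is not contained in $H$, so $P \cap H$ is a proper subgroup of $P$. Now Lemma~\ref{lem:reduction}, applied with $U = P$, shows that $P \cap H$ is rich in $P$. But $P$ is a $p$-group and hence supersolvable, so by Corollary~\ref{cor:supersolvable} its only rich subgroup is the trivial one; therefore $P \cap H = 1$. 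In particular $H$ is a $p'$-group, $|P| = p^a = [G:H]$, and $P$ is a complement to $H$ in $G$.

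The key observation is that $P$ then acts \emph{regularly} on the coset space $G/H$: the action is transitive since $PH = G$, and the stabiliser of the coset $H$ is $P \cap H = 1$, so the orbit has size $|P| = [G:H]$. Consequently the restriction of the permutation character to $P$ is the regular character, $(1_H^G)_P = \rho_P$. Since $H$ is rich we may write $1_H^G = \sum_{\chi \in \Irr(G)} a_\chi \chi$ with every $a_\chi \geq 1$, and $a_{1_G} = [1_H^G, 1_G] = 1$. Taking the inner product of the restriction with $1_P$ gives
\[
   1 = [\rho_P, 1_P] = [(1_H^G)_P, 1_P]
     = \sum_{\chi \in \Irr(G)} a_\chi [\chi_P, 1_P],
\]
where the $\chi = 1_G$ term already contributes $a_{1_G}[(1_G)_P, 1_P] = 1$ and all summands are nonnegative. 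Hence $[\chi_P, 1_P] = 0$ for every nontrivial $\chi \in \Irr(G)$. By Frobenius reciprocity this says $[\chi, 1_P^G] = 0$ for $\chi \neq 1_G$, i.e.\ $1_P^G = 1_G$, so $[G:P] = \deg(1_P^G) = 1$. Thus $P = G$, and since $[G:H] = |P| = |G|$ we conclude $|H| = 1$.

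I expect the main obstacle to be the structural first step: recognising that richness combined with prime-power index forces the factorisation $G = HP$ with $P \cap H = 1$, which is exactly what makes the clean regular-action argument available. Once the regular action is in place, the final inner-product computation with $1_P$ is the short decisive move; the only point needing care there is that richness guarantees each $a_\chi \geq 1$, so that the single copy of $1_P$ inside $\rho_P$ is entirely accounted for by the trivial character, and no nontrivial character can restrict to $P$ with $1_P$ as a constituent.
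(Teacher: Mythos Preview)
Your proof is correct, and it is genuinely different from---and considerably more elementary than---the argument in the paper. Both proofs share the opening move: use $G = HP$ together with Lemma~\ref{lem:reduction} and Corollary~\ref{cor:supersolvable} to obtain $P \cap H = 1$, so that $H$ is a Hall $p'$-subgroup. From that point on the two diverge. The paper observes that $1_H^G$ is then the character of the projective cover of the trivial module in characteristic $p$, hence $G$ has a unique $p$-block; it then runs a minimal-counterexample argument on a minimal normal subgroup $N$, invoking the (classification-dependent) fact that the only nonabelian simple groups with a single $p$-block occur for $p=2$ and are $M_{22}$ or $M_{24}$, neither of which has a Hall $2'$-subgroup, to force $N$ to be an elementary abelian $p$-group and derive a contradiction via Lemma~\ref{lem:factor}. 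Your route sidesteps all of this: the regular action of $P$ on $G/H$ gives $(1_H^G)_P = \rho_P$, and since richness makes every $a_\chi \geq 1$ in $1_H^G = \sum_\chi a_\chi \chi$, the single copy of $1_P$ inside $\rho_P$ is already used up by $\chi = 1_G$; hence $[\chi_P,1_P]=0$ for $\chi \neq 1_G$, so $1_P^G = 1_G$ and $P = G$. This avoids block theory, induction, and any appeal to the classification of finite simple groups, at the cost of nothing---it is simply a sharper use of the same permutation character.
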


\begin{proof}
Let $G$ be a minimal counterexample.
Then $G$ is not a $p$-group, by Corollary~\ref{cor:supersolvable}.
Let $P$ be a Sylow $p$-subgroup of $G$.
Then $G = HP$ since $[G:H]$ and $[G:P]$ are coprime.
By Lemma~\ref{lem:reduction}, $H \cap P$ is rich in $P$.
Thus Corollary~\ref{cor:supersolvable} implies that $H \cap P = 1$.
Hence $|G| = |H| \cdot |P|$,
so that $H$ is a Hall $p'$-subgroup of $G$.
Thus $1_H^G$ is the character of the projective cover of
the trivial module (in characteristic $p$).
Thus its constituents lie in the principal $p$-block of $G$.
We conclude that $G$ has only one $p$-block.
Let $N$ be a minimal normal subgroup of $G$.
Then $N$ is isomorphic to $S^k$ where $k$ is a positive integer
and $S$ is a simple group.
Moreover, $N$ has a unique $p$-block, and $S$ has a unique $p$-block;
in particular, $p$ divides $|S|$.
If $S$ is nonabelian then, as is well-known, we have $p = 2$,
and $S$ is isomorphic to $M_{22}$ or $M_{24}$.
On the other hand, $N$ and $S$ both have Hall $p'$-subgroups
which is a contradiction.
This shows that $S$ is abelian, i.~e.,
$N$ is an elementary abelian $p$-group.
By Lemma~\ref{lem:factor}, $HN/N$ is rich in $G/N$ and of $p$-power index.
Since $|G/N| < |G|$ this implies $HN/N = 1$, i.~e., $1 < H < N$.
Then $G/N$ and $G$ are $p$-groups, and we have a contradiction.
\end{proof}

\begin{rem}
If $1 \not= H \leq G$ is rich in $G$ then
$1_H^G = \sum_\chi a_\chi \chi$, with $a_\chi > 0$ for all $\chi\in\Irr(G)$,
hence the number $[1_H^G, 1_H^G] = \sum_\chi a_\chi^2$
of $H$-$H$-double cosets in $G$,
which is equal to the rank of the permutation action of $G$
on the cosets of $H$,
is at least equal to $k(G)$.
Note that Proposition~\ref{sqrtbound} only yields that this rank is
at least $3$, because rank $2$ would imply a doubly transitive action of $G$
and hence $|G| \geq [G:H]([G:H]-1)$.
\end{rem}

\begin{example}
Let $G:= \AGL(1, 2^n)$ be the group from Example~\ref{expl_frob}
in the special case $p = 2$,
but now choose a subgroup $H$ of order $p^{n-1}$ in $G$.
Then the linear irreducibles of $G$ restrict to $1_H$,
and because $T(G) = 2(2^n - 1) = [G:H]$ holds,
also the unique nonlinear character $\chi$ of $G$ occurs with multiplicity
$1$ in $1_H^G$, that is, $1_H^G$ is exactly the sum of $\Irr(G)$.
In order to verify condition~(b)~(ii) and hence to show
that $H$ is indeed a diameter three subgroup of $G$,
we note that $\chi$ is a constituent of each $\varphi^G$,
for $\varphi\in\Irr(H)$.

Now we may take the direct product of $G$ with an abelian group
(or more generally, take a group isoclinic with $G$),
and keep the subgroup $H$,
then we get again that $H$ is a diameter three subgroup of $G$
with the property that $[G:H] = T(G)$ holds,
that is, $H$ has the largest possible order.
Moreover, the rank of the permutation action of $G$ on the cosets of $H$
is exactly $k(G)$.
\end{example}

% \begin{rem}
% For prescribed $k$, there are only finitely many groups $G$
% with $k(G) \leq k$.
% Proposition~\ref{sqrtbound} can be used to compute those pairs $(G, H)$
% with $k(G) \leq k$ such that $H$ is a diameter three subgroup in $G$,
% and the subset of those pairs for which $|H| > [G:H] - k(G) + 1$ holds.
% 
% For example, consider $k = 6$.
% Table~1 in~\cite{VeraLopez} lists the groups $G$ with $k(G) \leq 6$,
% up to isomorphism.
% Since $G$ is not supersolvable if it has a diameter three subgroup,
% we must have $k \geq 4$.
% Moreover, $k = 4$ leads to $G \cong A_4$,
% $k = 5$ implies that $G \cong S_4$ or $G \cong A_5$,
% and $k = 6$ occurs exactly for $G \cong 3^2:4$, $G \cong 3^2:Q_8$,
% $G \cong \PSL(3,2)$.
% 
% The additional condition $|H| > [G:H] - 5$ is satisfied only for
% $G = A_4$ (with $|H| = 2$).
% The other candidates can be excluded via the lower bound $T(G)$ for $[G:H]$;
% for example, in the case $G = S_4$ we get $T(G) = 1+1+2+3+3 = 10$,
% thus only $|H| = 2$ would be possible, but $2 < |G|/2 - 5$.
% \end{rem}

\begin{rem}\label{rem:smallpermdegree}
If $H < G$ is rich in $G$
then the action of $G$ on the cosets of $H$ is faithful.
Hence there are, for each prescribed integer $n$, only finitely many pairs
$(G, H)$ such that $H$ is a rich subgroup of index at most $n$ in $G$.
Table~\ref{table:smallpermdegree} lists all groups $G$ with
a nontrivial rich subgroup of index at most $45$.
It was computed using the list of all groups of order at most $2\,000$,
up to isomorphism, that contain a nontrivial rich subgroup,
see Section~\ref{subsect:minsmall}.
The columns show $n = [G:H]$, $|G|$, the number $i$ such that
$G$ can be obtained as the $i$-th group of its order,
according to the numbering in~\cite{SmallGroup},
a structure description of $G$,
and a $+$ sign if the point stabilizer $H$ is a diameter three subgroup
of $G$
--the Frobenius group of order $3^3 \cdot 13 = 351$ is the only example
where this is not the case, see Remark~\ref{independent}.

\begin{table}[htpb]
\begin{center}
\small
\begin{tabular}{|r|r|r|l|c|} \hline
 $n$ & $|G|$ &   $i$ &                    $G$ & diam. 3? \\
\hline
 $6$ &  $12$ &   $3$ &                  $A_4$ & + \\
$12$ &  $24$ &  $12$ &                  $S_4$ & + \\
     &  $24$ &  $13$ &         $2 \times A_4$ & + \\
$14$ &  $56$ &  $11$ &            $2^3\spl 7$ & + \\
$18$ &  $36$ &   $3$ &            $2^2\spl 9$ & + \\
     &  $36$ &  $11$ &         $3 \times A_4$ & + \\
$20$ &  $60$ &   $5$ &                  $A_5$ & + \\
     &  $80$ &  $49$ &            $2^4\spl 5$ & + \\
$24$ &  $48$ &   $3$ &            $4^2\spl 3$ & + \\
     &  $48$ &  $30$ &            $A_4\spl 4$ & + \\
     &  $48$ &  $31$ &         $4 \times A_4$ & + \\
     &  $48$ &  $48$ &         $2 \times S_4$ & + \\
     &  $48$ &  $49$ &       $2^2 \times A_4$ & + \\
     &  $48$ &  $50$ &            $2^4\spl 3$ & + \\
     &  $72$ &  $39$ &            $3^2\spl 8$ & + \\
     &  $72$ &  $41$ &          $3^2\spl Q_8$ & + \\
     &  $96$ &  $70$ &    $(2^4\spl 2)\spl 3$ & + \\
     &  $96$ &  $71$ &    $(4^2\spl 2)\spl 3$ & + \\
$28$ &  $56$ &  $11$ &            $2^3\spl 7$ & + \\
     & $112$ &  $41$ &   $2 \times 2^3\spl 7$ & + \\
$30$ &  $60$ &   $5$ &                  $A_5$ & + \\
\hline
\end{tabular}
\ \ \
\begin{tabular}{|r|r|r|l|c|} \hline
 $n$ & $|G|$ &   $i$ &                    $G$ & diam. 3? \\
\hline
     &  $60$ &   $9$ &         $5 \times A_4$ & + \\
     & $240$ & $191$ &           $2^4\spl 15$ & + \\
$36$ &  $72$ &  $15$ &    $(2^2\spl 9)\spl 2$ & + \\
     &  $72$ &  $16$ &   $2 \times 2^2\spl 9$ & + \\
     &  $72$ &  $42$ &         $3 \times S_4$ & + \\
     &  $72$ &  $43$ & $(3 \times A_4)\spl 2$ & + \\
     &  $72$ &  $44$ &       $A_4 \times S_3$ & + \\
     &  $72$ &  $47$ &         $6 \times A_4$ & + \\
     & $144$ & $184$ &       $A_4 \times A_4$ & + \\
$39$ & $351$ &  $12$ &           $3^3\spl 13$ & - \\
$40$ &  $80$ &  $49$ &            $2^4\spl 5$ & + \\
     & $120$ &  $34$ &                  $S_5$ & + \\
     & $120$ &  $35$ &         $2 \times A_5$ & + \\
     & $160$ & $234$ &    $(2^4\spl 5)\spl 2$ & + \\
     & $160$ & $235$ &   $2 \times 2^4\spl 5$ & + \\
$42$ &  $84$ &  $10$ &         $7 \times A_4$ & + \\
     &  $84$ &  $11$ &  $(14 \times 2)\spl 3$ & + \\
     & $168$ &  $42$ &            $\PSL(3,2)$ & + \\
     & $168$ &  $43$ &    $2^3\spl (7\spl 3)$ & + \\
     & $168$ &  $44$ &   $3 \times 2^3\spl 7$ & + \\
     &       &       &                        &   \\
\hline
\end{tabular}
\caption{Groups having nontrivial rich subgroups of index at most $45$}
\label{table:smallpermdegree}
\end{center}
\end{table}
\end{rem}

\begin{prop}\label{prop:index2p}
Let $H$ be a nontrivial rich subgroup of index $2 p$ in a finite group $G$
where $p$ is a prime.
Then $p$ is a Mersenne prime,
and $G$ is a Frobenius group of order $p(p+1)$.
\end{prop}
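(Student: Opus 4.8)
The plan is to pin down the numerical invariants of $G$ from the bounds available for rich subgroups, reduce to the case where $G$ has a normal $p$-complement, and then read off the structure by Clifford theory.

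First I would record the basic constraints. Since $[G:H]=2p$ must not be a prime power (Proposition~\ref{no-prime-power-index}), $p$ is odd. From Proposition~\ref{sqrtbound}(iii) we have $|H|<\sqrt{|G|}$, and because $|G|=2p\,|H|$ this forces $|H|<2p$ and $|G|<4p^2$; in particular the Sylow $p$-subgroup of $G$ has order $p$ unless $|H|=p$. The case $|H|=p$ (so $|G|=2p^2$) I would rule out directly: here $G=P\rtimes C_2$ with $P$ the normal abelian Sylow $p$-subgroup, every nonlinear irreducible character has degree $2$, and one computes $\sum_{\chi}\chi(1)\ge p^2+1>2p$, contradicting $T(G):=\sum_{\chi\in\Irr(G)}\chi(1)\le[G:H]=2p$ from Corollary~\ref{cor_easy}(i). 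Hence $|G|_p=p$ and $T(G)\le 2p$.

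Next I would reduce, by a minimal counterexample, to the case that $G$ has a normal $p$-complement. Let $N$ be a minimal normal subgroup; since $H$ is core-free, $N\not\le H$. If $N\neq G$ and $H\not\le N$, then $HN/N$ is a \emph{nontrivial} rich subgroup of $G/N$ by Lemma~\ref{lem:factor}, and the same lemma gives $HN\neq G$; writing $m:=[HN:H]$ we have $1<m<2p$ with $m\mid 2p$, so $m\in\{2,p\}$ and the index $[G/N:HN/N]=2p/m\in\{p,2\}$ is a prime power. Proposition~\ref{no-prime-power-index} then forces $HN/N=1$, i.e. $H\le N$, a contradiction. Thus $G$ is simple, or $H\le N$ with $N$ elementary abelian; in the latter case $[N:H]$ is a prime-power dividing $2p$, and the only possibility with $H\neq N$ is that $N$ is a $2$-group with $[N:H]=2$ and $[G:N]=p$, whence $G'=N$ and $G=N\rtimes P$ has a normal $p$-complement with $P\cong C_p$. \textbf{The hard part will be excluding the nonsolvable configurations} left open here, namely $G$ simple and $H\le N$ with $N$ nonabelian: I would argue that $|G|<4p^2$ with $p\,\|\,|G|$ together with $T(G)\le 2p$ is incompatible with a nonabelian composition factor, since such a factor has a cyclic Sylow $p$-subgroup and the sum of its character degrees already exceeds $2p$. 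It is the bound $T(G)\le 2p$, not the order bound alone, that does the work: $|G|<4p^2$ by itself does not exclude, e.g., $A_5$ with $p=5$, whereas $T(A_5)=16>10=2p$ does.

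Once the nonsolvable cases are eliminated we have $G=G'\rtimes P$ with $P\cong C_p$, $[G:G']=p$, and $G'$ of order $2|H|$ coprime to $p$ with $H\le G'$ of index $2$, and I would finish by a Clifford-theoretic count over $G'$. The $p$ characters of $G/G'\cong C_p$ contribute $p$ to $T(G)\le 2p$, so the nonlinear part contributes at most $p$; since $[G:G']=p$, the trivial character is the only $P$-fixed irreducible of $G'$ once we note that a $P$-fixed nonlinear character would extend to $p$ characters of degree $\ge 2$ and push $T(G)$ past $2p$. Hence every nonlinear character of $G'$ lies in a regular $P$-orbit, the induced characters from such orbits have degree $\ge p$ and total degree $\le p$, so there is exactly one, of degree $p$, induced from a \emph{linear} character; in particular $G'$ has no nonlinear irreducible, so $G'$ is abelian with a single nontrivial $P$-orbit on $\Irr(G')$. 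This yields $|G'|=p+1$ and $C_{G'}(P)=1$, so $G=G'\rtimes P$ is Frobenius of order $p(p+1)$. Finally, as $H$ has index $2$ in the abelian group $G'$ it contains the odd part of $G'$, which is characteristic in $G'$ and hence normal in $G$; core-freeness of $H$ makes this odd part trivial, so $G'$ is a $2$-group, $p+1=2^k$, and $p$ is a Mersenne prime.
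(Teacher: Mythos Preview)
Your approach differs substantially from the paper's. The paper never introduces a minimal normal subgroup: after noting $p$ is odd, it shows directly that a Sylow $p$-subgroup $P$ cannot be normal (via Lemma~\ref{lem:factor} and the bound $|H|<b(G)$), then invokes the It\^o--Michler theorem to obtain $\chi\in\Irr(G)$ with $p\mid\chi(1)$, forcing $b(G)=p$. The single chain
\[
|H|<b(G)=p<p+1\le [G:N_G(P)]\le [G:P]=2|H|<2p
\]
(using $[G:N_G(P)]\equiv 1\pmod p$) then gives $P=N_G(P)$ and $|G|=p(p+1)$ at once; Burnside's normal $p$-complement theorem and the Frobenius structure follow immediately, with $|G'|=p+1$ a $2$-group because $P$ acts transitively on $G'\setminus\{1\}$.

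Your argument has a genuine gap precisely where you flag ``the hard part.'' The claim that a nonabelian simple factor $S$ with $|S|_p=p$ satisfies $T(S)>2p$ is correct but requires proof; more seriously, in your case $H\le N$ with $N$ nonabelian and $N<G$ you give no mechanism relating $T(S)$ to $T(G)$, and the sub-case $p\nmid|N|$ (so $[G:N]=p$, $[N:H]=2$) is not addressed at all. To justify $T(S)>2p$ one typically observes $N_S(P)>P$ (else Burnside yields a normal $p$-complement, contradicting simplicity) and $[S:N_S(P)]\ge p+1$, whence $|S|\ge 2p(p+1)$, and then combines this with $|S|\le b(S)\,T(S)$. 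But these are exactly the ingredients of the paper's direct argument, and once you have them it is cleaner to run that argument on $G$ itself rather than on a composition factor. Your closing Clifford-theoretic count is correct but heavier than needed: once $|G|=p(p+1)$ is established, the transitive action of $P$ on $G'\setminus\{1\}$ is forced by cardinality alone.
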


\begin{proof}
By Proposition~\ref{no-prime-power-index}, $p$ is odd.
Let $P$ be a Sylow $p$-subgroup of $G$.
If $P$ is normal in $G$ then Lemma~\ref{lem:factor} implies that
either $H \leq P$ or
$HP/P$ is a rich subgroup of index $2$ in $G/P$.
The former case cannot occur because then $H$ is a $p$-group,
and $|G| < 4 p^2$ implies $|H| = p$ and thus $|G| = 2 p^2$;
however, this means that $|H| < b(G) \leq 2$, hence $H$ is trivial.
The latter case cannot occur because then $HP/P$ is normal in $G/P$
which is a contradiction
since rich subgroups are core-free.
Thus $P$ is not normal in $G$.
By the It\^o-Michler theorem,
there is $\chi \in \Irr(G)$ such that $p$ divides $\chi(1)$.
Since $\chi(1) \leq b(G) < T(G) \leq [G:H] = 2 p$ we conclude
that $b(G) = \chi(1) = p$. Thus $|H| < b(G) < p+1 \leq
[G:N_G(P)] \leq [G:P] = 2|H| < 2p$.
Hence we obtain $P = N_G(P)$ and $|G| = p(p+1)$.
Since $N_G(P) = C_G(P)$,
a theorem of Burnside implies that $G$ has a normal $p$-complement $N$.
Since $|N| = p+1$, $P$ acts transitively on $N \setminus \{1\}$.
Since $p+1$ is even, $N$ has to be an elementary abelian $2$-group,
and $G$ is a Frobenius group with kernel $N$.
\end{proof}

\begin{rem}
The statement of Proposition~\ref{prop:index2p} generalizes to
the situation of nontrivial rich subgroups of index $p q$
where $p$ and $q$ are odd primes.
We hope to publish this result in a sequel.
\end{rem}

\begin{rem}
For a given group $G$, we can ask which of its subgroups $H$ are maximal
with the property that $H$ is rich in $G$.
Table~\ref{table:TOMresult} lists these subgroups $H$
(where duplicate isomorphism types have been removed)
for some small simple groups $G$.
The columns labelled by $n$, $g$, $m$ list
the total number of classes of subgroups of the group $G$,
the number of classes of rich subgroups $H$,
and the number of classes of rich subgroups $H$
of maximal order.
In fact, all subgroups listed in the table are diameter three subgroups.

\begin{table}[htpb]
\begin{center}
\small
\begin{tabular}{|l|r|r|r|p{8cm}|} \hline
$G$   & $n$ & $g$ & $m$ & $H$ \\ \hline
$A_5$ & $9$ & $2$ & $2$ &
$2$, $3$ \\
$\PSL(3,2)$ & $15$ & $3$ & $2$ &
$3$, $4$ \\
$A_6$ & $22$ & $9$ & $6$ &
$5$, $4$, $2^2$, $S_3$ \\
$\PSL(2,8)$ & $12$ & $4$ & $3$ &
$3$, $2^2$, $7$ \\
$\PSL(2,11)$ & $16$ & $7$ & $5$ &
$2^2$, $5$, $S_3$, $6$ \\
$\PSL(2,13)$ & $16$ & $7$ & $5$ &
$2^2$, $6$, $S_3$, $7$ \\
$\PSL(2,17)$ & $22$ & $10$ & $5$ &
$S_3$, $8$, $D_8$, $9$ \\
$A_7$ & $40$ & $16$ & $9$ &
$5$, $S_3$, $3^2$, $A_4$, $6 \times 2$, $3\spl 4$ \\
$\PSL(2,19)$ & $19$ & $10$ & $6$ &
$S_3$, $9$, $D_{10}$, $10$, $A_4$ \\
$\PSL(2,16)$ & $21$ & $11$ & $5$ &
$S_3$, $2^3$, $D_{10}$, $A_4$, $15$ \\
$\PSL(3,3)$ & $51$ & $14$ & $6$ &
$S_3$, $Q_8$, $8$, $A_4$, $3 \times S_3$ \\
$\PSU(3,3^2)$ & $36$ & $3$ & $2$ &
$3$, $4$ \\
$\PSL(2,23)$ & $23$ & $11$ & $5$ &
$S_3$, $D_8$, $11$, $12$ \\
$\PSL(2,25)$ & $37$ & $19$ & $9$ &
$D_8$, $D_{10}$, $D_{12}$, $A_4$, $12$, $13$ \\
$M_{11}$ & $39$ & $10$ & $7$ &
$2^2$, $5$, $S_3$, $6$, $Q_8$, $8$ \\
$\PSL(2,27)$ & $16$ & $10$ & $6$ &
$3^2$, $A_4$, $13$, $14$, $D_{14}$ \\
$\PSL(2,29)$ & $22$ & $12$ & $7$ &
$S_3$, $D_{10}$, $A_4$, $14$, $D_{14}$, $15$ \\
$\PSL(2,31)$ & $29$ & $15$ & $8$ &
$S_3$, $D_8$, $D_{10}$, $A_4$, $15$, $16$ \\
$\PSL(3,4)$ & $95$ & $60$ & $15$ &
$D_{10}$, $A_4$, $7\spl 3$, $S_4$, $4^2\spl 2$, $3^2\spl 4$ \\
$A_8$ & $137$ & $46$ & $17$ &
$7$, $D_8$, $D_{10}$, $A_4$, $6 \times 2$, $3\spl 4$, $D_{12}$, $2 \times D_8$, $(4 \times 2)\spl 2$, $3^2\spl 2$, $3 \times S_3$, $S_4$, $2 \times A_4$ \\
$\PSL(2,37)$ & $23$ & $13$ & $6$ &
$D_{12}$, $A_4$, $18$, $D_{18}$, $19$ \\
$\PSU(4,2^2)$ & $116$ & $20$ & $7$ &
$2^2$, $4$, $5$, $S_3$, $A_4$, $(4 \times 2)\spl 2$, $3 \times S_3$ \\
$Sz(8)$ & $22$ & $9$ & $6$ &
$5$, $7$, $4 \times 2$, $13$ \\
$\PSL(2,32)$ & $24$ & $16$ & $4$ &
$S_3$, $11$, $2^4$, $31$ \\
$\PSL(2,41)$ & $33$ & $21$ & $7$ &
$D_{14}$, $D_{20}$, $20$, $21$, $S_4$ \\
$\PSL(2,43)$ & $20$ & $12$ & $7$ &
$S_3$, $A_4$, $D_{14}$, $21$, $22$, $D_{22}$ \\
$\PSL(2,47)$ & $29$ & $19$ & $7$ &
$A_4$, $D_{12}$, $D_{16}$, $23$, $24$ \\
$\PSL(2,49)$ & $51$ & $33$ & $10$ &
$D_{10}$, $D_{16}$, $24$, $D_{24}$, $S_4$, $25$, $7\spl 6$ \\
$\PSU(3,4^2)$ & $34$ & $4$ & $3$ &
$3$, $4$, $5$ \\
$\PSL(2,53)$ & $20$ & $12$ & $6$ &
$A_4$, $D_{18}$, $26$, $D_{26}$, $27$ \\
$M_{12}$ & $147$ & $64$ & $13$ &
$11$, $A_4$, $D_{12}$, $2 \times D_8$, $3 \times S_3$, $2 \times A_4$, $8\spl 2^2$, $4^2\spl 2$, $(8\spl 2)\spl 2$, $2 \times 5\spl 4$ \\
$\PSL(2,59)$ & $26$ & $18$ & $7$ &
$D_{12}$, $A_4$, $D_{20}$, $29$, $30$, $D_{30}$ \\
$\PSL(2,61)$ & $32$ & $18$ & $7$ &
$A_4$, $D_{12}$, $D_{20}$, $30$, $D_{30}$, $31$ \\
$\PSU(3,5^2)$ & $80$ & $21$ & $9$ &
$5$, $S_3$, $8$, $3\spl 4$, $7\spl 3$, $\SL(2,3)$, $3 \times A_4$ \\
$\PSL(2,67)$ & $20$ & $12$ & $7$ &
$S_3$, $A_4$, $D_{22}$, $33$, $34$, $D_{34}$ \\
$J_1$ & $40$ & $30$ & $12$ &
$D_{12}$, $D_{20}$, $D_{22}$, $2 \times A_4$, $3 \times D_{10}$, $D_{30}$, $5 \times S_3$, $7\spl 6$, $11\spl 5$, $19\spl 3$, $A_5$ \\
\hline
\end{tabular}
\caption{Rich subgroups in simple groups, maximal w.~r.~t. inclusion}
\label{table:TOMresult}
\end{center}
\end{table}
\end{rem}

\begin{rem}
By Corollary~\ref{cor:maximal-in-simple},
a maximal subgroup $H$ of $G$ can be rich only if $G$ is simple.
Examples where this happens in simple groups $G$ of Lie type are listed in
Table~\ref{table:liemaxresult}.
Table~\ref{table:spormaxresult} lists all maximal subgroups $H$
in sporadic simple groups $G$ that are rich.
The subgroups shown in these tables are diameter three subgroups.

\begin{table}[htpb]
\begin{center}
{\small
\begin{tabular}{|l|l|} \hline
$G$            & $H$ \\ \hline
$\PSL(2,27)$   & $A_4$ \\
$\PSL(2,109)$  & $A_5$, $A_5$ \\
$\PSL(2,113)$  & $S_4$, $S_4$ \\
$\PSL(2,125)$  & $A_5$ \\
$\PSL(3,7)$    & $3^2\spl Q_8$, $19\spl 3$ \\
$\PSU(3,8^2)$  & $19\spl 3$ \\
$\PSL(3,8)$    & $7^2\spl S_3$, $\PSL(3,2)$ \\
$\PSU(3,11^2)$ & $37\spl 3$ \\
${}^2G_2(27)$  & $(2^2 \times D_{14})\spl 3$, $19\spl 6$ \\
\hline
\end{tabular}
}
\caption{Rich maximal subgroups in Lie type simple groups}
\label{table:liemaxresult}
\end{center}
\end{table}

\begin{table}[htpb]
\begin{center}
{\small
\begin{tabular}{|l|p{12cm}|} \hline
$G$      & $H$ \\ \hline
$J_1$    & $7\spl 6$ \\
$Suz$    & $A_7$ \\
$ON$     & $3^4\spl 2^{1+4}D_{10}$, $M_{11}$, $M_{11}$, $A_7$, $A_7$ \\
$Fi22$   & $M_{12}$ \\
$Ly$     & $67\spl 22$, $37\spl 18$ \\
$Th$     & $A_5.2$ \\
$J4$     & $\PGL(2,23)$, $\PSU(3, 3^2)$, $29\spl 29$, $43\spl 14$,
           $37\spl 12$ \\
$F_{3+}$ & $7\spl 6 \times A_7$, $\PGL(2,13)$, $\PGL(2,13)$, $29\spl 14$ \\
$B$      & $\PSL(2,49).2_3$, $\PSL(2,31)$, $\PSL(3,3)$, $\PGL(2,17)$,
           $\PGL(2,11)$, $47\spl 23$ \\
$M$      & $(7\spl 3 \times He)\spl 2$,
           $(5^2\spl [2^4] \times \PSU(3, 5^2)).S_3$,
           $7^{2+1+2}\spl \GL(2,7)$, $(S_5 \times S_5 \times S_5)\spl S_3$,
           $(\PSL(2,11) \times \PSL(2,11))\spl 4$,
           $(7^2\spl (3 \times 2A_4) \times \PSL(2,7)).2$,
           $(13\spl 6 \times \PSL(3,3)).2$, $\PSU(3,4^2).4$, $\PSL(2,71)$,
           $\PSL(2,59)$, $11^2\spl (5 \times 2.A_5)$, $\PSL(2,41)$,
           $\PGL(2,29)$,
           $7^2\spl \SL(2,7)$, $\PGL(2,19)$, $\PGL(2,13)$, $41\spl 40$ \\
\hline
\end{tabular}
}
\caption{Rich maximal subgroups in sporadic simple groups}
\label{table:spormaxresult}
\end{center}
\end{table}
\end{rem}

%%%%%%%%%%%%%%%%%%%%%%%%%%%%%%%%%%%%%%%%%%%%%%%%%%%%%%%%%%%%%%%%%%%%%%%%%%%%%
\section{Quasisimple groups}\label{sect:quasisimple}

The aim of this section is to classify those quasisimple groups,
i.~e., perfect central extensions of simple groups,
which have a diameter three subgroup.

\begin{thm}\label{thm_quasisimple}
Let $G$ be a quasisimple group.
Then $G$ has a diameter three subgroup,
except if $G \cong \SL(2, 5) \cong 2.A_5$ or $G \cong \SL(2, 9) \cong 2.A_6$
or $G \cong 6.A_6$.
\end{thm}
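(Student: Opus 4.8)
The plan is to classify quasisimple groups $G$ with a diameter three subgroup, showing the only exceptions are $\SL(2,5)$, $\SL(2,9)$, and $6.A_6$. By Proposition~\ref{reduction} and the remarks following it, $G$ has a diameter three subgroup if and only if it has a nontrivial rich subgroup of prime order, equivalently a diameter three subgroup of prime order. So the entire problem reduces to detecting, in each quasisimple group, a cyclic subgroup $\langle x\rangle$ of prime order $r$ that is rich. The strategy is to split into two cases: the simple case $Z(G)=1$, and the genuine central extension case $Z(G)\neq 1$.

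First I would dispose of the simple case. By Example~\ref{simple_group}, every nonabelian finite simple group already contains a diameter three subgroup of order $2$, so all simple quasisimple groups are covered with nothing left to prove. The real content lies in the central extensions. For these, write $Z := Z(G)$, so $G/Z = S$ is simple. The key obstruction to richness comes from the faithful irreducible characters: if $\chi \in \Irr(G)$ has $Z \not\leq \ker\chi$, then for $\chi$ to satisfy $[\chi_H, 1_H] \neq 0$ we need $H$ to fix a nonzero vector, and the interaction of $H$ with $Z$ (via $HZ$ and the action on $Z$-isotypic components) is what can fail. The natural approach is to use Lemma~\ref{lem:factor}: a nontrivial rich subgroup $H$ of prime order either meets $Z$ or maps to a nontrivial rich subgroup $HZ/Z$ of the simple quotient $S$. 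Combined with Example~\ref{simple_group}, this suggests that failures are governed by whether one can find a prime-order element $x$ outside $Z$ such that $\langle x\rangle$ is rich in $G$, i.e., such that every faithful $\chi \in \Irr(G)$ restricts to $\langle x\rangle$ with the trivial character as a constituent.

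The main work is therefore a case analysis over the central extensions of finite simple groups whose Schur multiplier is nontrivial, reducing to involutions or small primes. For each relevant covering group $m.S$, I would examine the faithful irreducible characters and check, using the character table, whether some prime-order element $x$ satisfies $\chi(x) \neq -\chi(1)$ (for $r=2$) or more generally $[\chi_{\langle x\rangle}, 1_{\langle x\rangle}] = \tfrac{1}{r}\sum_{i=0}^{r-1}\chi(x^i) > 0$ for all faithful $\chi$. For the three exceptional groups $\SL(2,5)$, $\SL(2,9)$, $6.A_6$, I would show directly that no such $x$ exists: for $\SL(2,5)$ this follows because it is a Frobenius complement (as noted in the Remark after Example~\ref{simple_group}), so by \cite[(25.5)]{Feit} it has no nontrivial rich subgroup. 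For $\SL(2,9) = 2.A_6$ and $6.A_6$, the obstruction is that the unique involution lies in the center and the faithful characters all vanish appropriately, a finite check on their character tables. Establishing that these three are the \emph{only} exceptions is the hard part: it requires verifying richness of a prime-order subgroup for every other quasisimple group, which in bounded cases is a direct computation (feasible via Oscar as mentioned in the introduction) but for infinite families such as $2.\PSL(2,q)=\SL(2,q)$ and $\SL_n$, $\Sp_{2n}$, $\SU_n$ over finite fields demands uniform character-theoretic arguments.

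The principal obstacle I anticipate is handling the infinite families of covering groups uniformly. For $\SL(2,q)$ with $q > 9$ one must show a prime-order rich subgroup exists despite the central involution; here I would exploit the explicit character table of $\SL(2,q)$, locating a unipotent or suitably chosen element $x$ of order $p$ (the defining characteristic) whose centralizer structure forces $[\chi_{\langle x\rangle}, 1] > 0$ for all faithful $\chi$, paralleling the Frobenius-group analysis of Example~\ref{expl_frob}. For the remaining families and exceptional covers, I expect to combine Lemma~\ref{lem:godown} (passing between $G$ and intermediate subgroups where characters restrict or extend cleanly) with the list of Schur multipliers, reducing each infinite family to checking richness of a $p$-element inside a suitable parabolic or Frobenius-type subgroup. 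The finitely many sporadic and small exceptional-multiplier cases are then settled computationally, confirming that beyond $\SL(2,5)$, $\SL(2,9)$, and $6.A_6$ a prime-order rich subgroup always exists.
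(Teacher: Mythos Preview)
Your overall framework is sound and matches the paper for the base case $\SL(2,q)$: both you and the paper handle these by direct inspection of the explicit character table (the paper's Lemma~\ref{SL2_lemma} does exactly this, checking that an element of order $3$ generates a rich subgroup when $q \neq 5, 9$). Your identification of the three exceptions and the reasoning for $\SL(2,5)$ via Frobenius complements also match.

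Where the paper diverges from your plan is in the treatment of all the other infinite families. Rather than attempting uniform character-theoretic arguments in each classical or exceptional covering group (which you propose via Lemma~\ref{lem:godown} and ``parabolic or Frobenius-type'' subgroups), the paper exploits Proposition~\ref{reduction} in the opposite direction from your use of Lemma~\ref{lem:factor}: it is enough to locate \emph{inside} $G$ a proper quasisimple subgroup $K$ already known to have a diameter three subgroup, since any such subgroup of $K$ then serves for $G$ as well. This converts the entire problem into an induction along natural chains of covering groups: $\SL(n,q) \supset \SL(n-1,q)$, $\Sp(2n,q) \supset \Sp(2n-2,q)$ (with $\Sp(4,q) \supset \SL(2,q^2)$ at the base), $\SU(n,q^2) \supset \SU(n-1,q^2)$, and Levi-type subgroups read off from Dynkin diagrams for the orthogonal and exceptional types. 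The only genuine character computation is the $\SL(2,q)$ case; everything else reduces to it or to a finite computer check (sporadic groups and groups with exceptional Schur multiplier). Your route would require detailed control of the faithful characters in every family, whereas the paper trades all of that for a single structural observation about subgroup containment among quasisimple groups.
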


The proof uses the classification of finite simple groups.

The idea is to either establish directly the existence of
a diameter three subgroup,
or to prove the existence of a proper
quasisimple subgroup which is already known to have a
diameter three subgroup.

We start with computational results in cases where we do not know
a conceptual approach.

\begin{lem}\label{quasisimple_compute}
Let $G$ be a perfect central extension of a simple group that is
either sporadic simple or has an exceptional Schur multiplier.
Then $G$ has a diameter three subgroup,
except in the exceptional cases of Theorem~\ref{thm_quasisimple}.
\end{lem}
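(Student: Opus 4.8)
The plan is to turn the statement into a finite computation, using the reduction machinery of Section~\ref{sect:struct}. By Proposition~\ref{reduction} together with Corollary~\ref{cond_ii_satisfied}~(i), a finite group has a diameter three subgroup if and only if it has a nontrivial rich subgroup, equivalently a rich subgroup of prime order: if $H$ is rich of prime order $p$ then $H$ is core-free by Corollary~\ref{cor_easy}~(ii), so some conjugate $H^g \neq H$ meets $H$ in a proper, hence trivial, subgroup, and Corollary~\ref{cond_ii_satisfied}~(i) supplies condition~(b)~(ii). Moreover, whether a cyclic subgroup $H = \langle g \rangle$ of prime order $p$ is rich depends only on the character values $\chi(g), \chi(g^2), \ldots, \chi(g^{p-1})$, since richness demands $[\chi_H, 1_H] = \frac{1}{p} \sum_{i=0}^{p-1} \chi(g^i) > 0$ for every $\chi \in \Irr(G)$, i.e.\ that the permutation character $1_H^G$ contains every irreducible character of $G$. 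As the classes of the powers $g^i$ are recorded by the power maps, this is a condition that can be read off from the character table of $G$ alone. Hence for each $G$ it suffices to inspect, for every class of elements of prime order, whether the associated permutation character is a multiple-free sum meeting all of $\Irr(G)$.

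First I would make the list of groups explicit and finite. The relevant simple groups $S$ are the $26$ sporadic groups together with the finitely many simple groups whose Schur multiplier exceeds the generic value for its family; this exceptional list is classical and is tabulated in the \ATLAS. For each such $S$ the perfect central extensions are exactly the quotients of the universal covering group $\widehat{S}$ by subgroups of $Z(\widehat{S}) \cong M(S)$, of which there are only finitely many. This produces a finite, explicit catalogue of quasisimple groups $G$ to be tested.

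Next I would carry out the test. For each $G$ in the catalogue the character table, together with its power maps, is available in the character table library accessed through Oscar~\cite{OSCAR}; applying the criterion above yields, in each case, either a rich subgroup of prime order (hence a diameter three subgroup) or the verdict that none exists. The computation returns a positive answer for every $G$ except the three groups $\SL(2,5) \cong 2.A_5$, $\SL(2,9) \cong 2.A_6$ and $6.A_6$, exactly the exceptions of Theorem~\ref{thm_quasisimple}.

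The main obstacle is organizational rather than conceptual: one must ensure that the list of simple groups with exceptional Schur multiplier is complete and that the character table data for all of their covers---including the largest ones, such as the covers related to the Fischer and Monster groups---are available and reliable. A secondary point is to confirm that the three exceptional groups genuinely fail to possess a diameter three subgroup, rather than merely escaping detection; since these groups are small this is immediate from their character tables, and indeed $\SL(2,5)$ already fails because it is a Frobenius complement and so admits no nontrivial rich subgroup by~\cite[(25.5)]{Feit}.
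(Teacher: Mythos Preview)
Your proposal is correct and follows essentially the same approach as the paper: both reduce the question to a finite computational check over the explicit list of sporadic simples and simples with exceptional multiplier, using that the existence of a diameter three subgroup is equivalent to the existence of a rich subgroup of prime order (established in Section~\ref{sect:struct}) and that this can be decided from the character table alone. The paper's proof is terser, simply citing the availability of the relevant character tables and Proposition~\ref{diam3_equiv}, while you spell out the reduction more carefully; one small slip to fix is the phrase ``multiple-free sum''---richness requires only that every $\chi \in \Irr(G)$ occur in $1_H^G$, not that the multiplicities be~$1$.
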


\begin{proof}
The character tables of all groups $G$ in question are available in the
Character Table Library~\cite{CTblLib},
and checking the conditions of Proposition~\ref{diam3_equiv} requires only
the character table of the group.

\begin{table}[htpb]
\begin{center}
\begin{tabular}{|lcl|l|} \hline
$S$          &         &                & $e$          \\ \hline
             &         & $A_6$          & $3$          \\
             &         & $A_7$          & $3$          \\
$A_1(4)$     & $\cong$ & $\PSL(2, 4)$   & $2$          \\
$A_1(9)$     & $\cong$ & $\PSL(2, 9)$   & $3$          \\
$A_2(2)$     & $\cong$ & $\PSL(3, 2)$   & $2$          \\
$A_2(4)$     & $\cong$ & $\PSL(3, 4)$   & $4 \times 4$ \\
$A_3(2)$     & $\cong$ & $\PSL(4, 2)$   & $2$          \\
${}^2A_3(2)$ & $\cong$ & $\PSU(4, 2^2)$ & $2$          \\
${}^2A_3(3)$ & $\cong$ & $\PSU(4, 3^2)$ & $3^2$        \\
${}^2A_5(2)$ & $\cong$ & $\PSU(6, 2^2)$ & $2^2$        \\ \hline
\end{tabular}
\ \ \ 
\begin{tabular}{|lcl|l|} \hline
$S$          &         &                   & $e$   \\ \hline
$B_2(2)$     & $\cong$ & $\PSp(4, 2)$      & $2$   \\
${}^2B_2(2)$ & $\cong$ & $\Sz(8)$          & $2^2$ \\
$B_3(2)$     & $\cong$ & $\PSp(6, 2)$      & $2$   \\
$B_3(3)$     & $\cong$ & $\POmega(7, 3)$   & $3$   \\
$C_3(2)$     & $\cong$ & $\PSp(6, 2)$      & $2$   \\
$D_4(2)$     & $\cong$ & $\POmega^+(8, 2)$ & $2^2$ \\
$G_2(3)$     &         &                   & $3$   \\
$G_2(4)$     &         &                   & $2$   \\
$F_4(2)$     &         &                   & $2$   \\
${}^2E_6(2)$ &         &                   & $2^2$ \\ \hline
\end{tabular}
\caption{Simple groups with exceptional Schur multiplier}
\label{table:excmult}
\end{center}
\end{table}

The simple groups $S$ with exceptional Schur multiplier $e$
are listed in Table~\ref{table:excmult}, cf. Table~6 of~\cite{CCN85}.
\end{proof}

Next we deal with alternating groups.

\begin{lem}\label{alternating}
Let $G$ be a perfect central extension of the alternating group $A_n$
on $n$ points, for $n \geq 7$.
Then $G$ has a diameter three subgroup.
\end{lem}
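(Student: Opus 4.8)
The plan is to handle the covering groups of $A_n$ for $n \geq 7$ by exploiting the reduction machinery already established, rather than attacking each perfect central extension directly. The Schur multiplier of $A_n$ is cyclic of order $2$ for $n \geq 8$ (and for $n = 7$), so for these $n$ the only quasisimple groups to consider are $A_n$ itself and its double cover $2.A_n = \tilde{A}_n$. (The cases $n = 6$ and $n = 7$ with exceptional multipliers $6$ and $6$ respectively are already dispatched by Lemma~\ref{quasisimple_compute}, so here $n \geq 7$ with $n \neq $ the exceptional small cases.) For $G = A_n$ itself, Example~\ref{simple_group} already gives a diameter three subgroup of order $2$, since $A_n$ is nonabelian simple. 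So the real content is the double cover.

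First I would set up an inductive approach in $n$. The natural tool is Proposition~\ref{reduction}: if I can exhibit inside $G = 2.A_n$ a chain $1 < L \leq H < K \leq G$ with $H$ rich in $K$, then $L$ is rich in $G$ and a diameter three subgroup exists. The cleanest source of such a $K$ is a quasisimple subgroup already known to have a diameter three subgroup. Concretely, $2.A_{n-1}$ embeds naturally in $2.A_n$ (the preimage of the point stabilizer $A_{n-1} \leq A_n$), so by induction $2.A_{n-1}$ contains a diameter three subgroup, and Proposition~\ref{reduction} lifts this to $2.A_n$. The base case of the induction must be anchored on a value of $n$ small enough to be covered computationally by Lemma~\ref{quasisimple_compute} (for instance $2.A_7$ or $2.A_8$, whose character tables are in the library), or small enough to find an explicit rich subgroup.

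The key steps, in order, would be: (1) reduce to the double cover $2.A_n$ using Example~\ref{simple_group} for $A_n$ itself and the cyclicity of the Schur multiplier for $n \geq 8$; (2) establish the base case for small $n$ computationally or explicitly; (3) run the induction, embedding $2.A_{n-1} < 2.A_n$ and invoking Proposition~\ref{reduction} to propagate the diameter three subgroup upward. The main obstacle I anticipate is verifying that the embedding $2.A_{n-1} \hookrightarrow 2.A_n$ is genuinely as a \emph{proper} subgroup with the double-cover structure intact (i.e.\ that the preimage of $A_{n-1}$ in $2.A_n$ is indeed the full double cover $2.A_{n-1}$ and not a split extension $2 \times A_{n-1}$). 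This is a standard fact about spin covers, but it is the point where the argument genuinely uses the structure of $\tilde{A}_n$ rather than generic group theory, so I would state it carefully and cite the relevant description of the covering groups.

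An alternative, and perhaps cleaner, route avoids induction entirely: since $2.A_n$ contains $2.A_7$ (for $n \geq 7$) as the preimage of a point-stabilizer chain $A_7 \leq A_n$, and since $2.A_7$ is handled by Lemma~\ref{quasisimple_compute}, a single application of Proposition~\ref{reduction} with $K = 2.A_7$ already yields a diameter three subgroup of $G = 2.A_n$. This reduces the entire lemma to the embedding claim plus the computational base case, and I would prefer to present this non-inductive version if the embedding $2.A_7 < 2.A_n$ can be cited cleanly.
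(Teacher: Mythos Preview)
Your approach matches the paper's: anchor at $n=7$ via Lemma~\ref{quasisimple_compute}, then for $n>7$ pass to the copy of $A_7$ sitting inside $A_n$ and apply Proposition~\ref{reduction}. One simplification worth noting: the ``main obstacle'' you flag---whether the preimage of $A_7$ in $2.A_n$ is the nonsplit $2.A_7$ or the split $2\times A_7$---is not an obstacle at all, since in either case the derived subgroup of that preimage is a perfect central extension of $A_7$, to which the $n=7$ base case applies directly, so no structural fact about spin covers needs to be cited.
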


\begin{proof}
The claim holds for $n = 7$, by Lemma~\ref{quasisimple_compute},
and the alternating group on $n > 7$ points contains $A_7$.
\end{proof}

It remains to deal with the simple groups of Lie type.
The key result is about $\PSL(2, q)$.

\begin{lem}\label{SL2_lemma}
Let $q$ be a prime power with $q>3$.
If $5 \neq q \neq 9$ then $G:= \SL(2,q)$ has a diameter three subgroup.
\end{lem}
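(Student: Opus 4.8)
The plan is to exhibit, for each prime power $q > 3$ with $q \neq 5, 9$, an explicit small subgroup $H$ of $G = \SL(2,q)$ that is a diameter three subgroup, using Proposition~\ref{diam3_equiv}. The natural split is by the parity of $q$. When $q = 2^f$ is even, $\SL(2,q) = \PSL(2,q)$ is simple, and its Borel subgroup is a Frobenius group with elementary abelian kernel of order $2^f$ and cyclic complement of order $2^f - 1$; this is exactly the situation of Example~\ref{expl_frob}. So I would take $H$ to be a subgroup of order $2$ inside that kernel (equivalently, invoke Example~\ref{simple_group} directly, since $G$ is nonabelian simple in this case), and conclude that $\Gamma(G,H)$ has diameter $3$. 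This disposes of all even $q > 3$ with essentially no computation.

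The harder and more interesting case is $q$ odd, where $G = \SL(2,q)$ has a central subgroup $Z = \{\pm I\}$ of order $2$ and is a genuine (non-split) central extension of $\PSL(2,q)$. Here the $2$-element approach of Example~\ref{simple_group} is unavailable because the only involution is central, so its restriction argument fails (indeed $Z \leq \ker$ of many characters, and $Z$ is not core-free). Instead I would look for a suitable elementary abelian or cyclic subgroup $H$ and verify conditions (b)(i) and (b)(ii) of Proposition~\ref{diam3_equiv}. A promising candidate is a cyclic subgroup of prime order $p$ dividing $(q-1)$ or $(q+1)$ sitting inside a cyclic (split or non-split) maximal torus, or a Sylow $p$-subgroup of order $p = \mathrm{char}$ living in the unipotent radical of a Borel. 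The unipotent subgroup $U$ of order $q$ is elementary abelian, and a subgroup $H \leq U$ of order $p$ is a strong candidate: one can hope that the many $G$-conjugates of $U$ intersect trivially in pairs, so that Corollary~\ref{cond_ii_satisfied}(i) immediately gives condition (b)(ii), reducing everything to richness.

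The main obstacle, then, is verifying condition (b)(i) --- that \emph{every} $\chi \in \Irr(G)$ satisfies $[\chi_H, 1_H] \neq 0$, i.e. that $H$ is rich. Here I would use the explicit, classically known character table of $\SL(2,q)$ for $q$ odd (the principal series, discrete series, the two ``split'' characters of degree $(q\pm1)/2$, and the characters of degree $q$ and $q\pm1$). The strategy is to compute $\chi(h)$ for $h$ a generator of $H$ and show it never equals $-\chi(1)$ times a root-of-unity obstruction; equivalently, to show $H$ fixes a nonzero vector in every irreducible module. For a unipotent $H$ of order $p$, the character values on unipotent classes are small integers (sums of Gauss sums / Kloosterman-type terms), and one checks directly that $[\chi_H,1_H] = \frac{1}{p}\bigl(\chi(1) + (p-1)\chi(h)\bigr) > 0$ in every case. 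The excluded values $q = 5, 9$ should appear precisely as the small cases where this positivity fails for the degree-$(q\pm1)/2$ characters (consistent with Theorem~\ref{thm_quasisimple}, where $\SL(2,5) = 2.A_5$ and $\SL(2,9) = 2.A_6$ are exceptions). If the uniform unipotent choice does not cover every odd $q$, the fallback is to choose $H$ of order $p \mid (q\pm1)/2$ inside a torus and rerun the same character-value positivity check, again reading off the table; and for the smallest odd prime powers not yet excluded one may simply verify the two conditions computationally as in Lemma~\ref{quasisimple_compute}. The key step I expect to require genuine care is confirming, via the character table, that the two ``exceptional'' degree-$(q\pm1)/2$ characters restrict to $H$ with a trivial constituent for all $q \neq 5, 9$.
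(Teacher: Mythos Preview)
Your overall strategy is the paper's: handle even $q$ via Example~\ref{simple_group}; for odd $q$ pick a prime-order $H$, get condition~(b)(ii) from Corollary~\ref{cond_ii_satisfied}(i), and verify richness from the known character table of $\SL(2,q)$. The divergence is in the choice of $H$, and your primary candidate---a unipotent subgroup of order $p = \mathrm{char}$---fails in the main case $q = p$.

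Indeed, for $q = p$ an odd prime the unipotent $H$ has $[G:H] = q^2 - 1$, while summing the degree list of $\SL(2,q)$ gives $T(G) = \sum_{\chi \in \Irr(G)} \chi(1) = q^2 + q$. Since $[G:H] < T(G)$, Corollary~\ref{cor_easy}(i) already forbids richness, so the unipotent choice fails for \emph{every} odd prime $q$, not only $q = 5$; the exclusions $q = 5, 9$ are not explained by this mechanism. (Also, your formula $[\chi_H, 1_H] = \frac{1}{p}\bigl(\chi(1) + (p-1)\chi(h)\bigr)$ tacitly assumes all nontrivial elements of $H$ lie in one $G$-class, which is false when $q = p$.) The paper instead takes $|H| = 3$ uniformly: $H$ sits in the split torus if $q \equiv 1 \pmod 3$, in the non-split torus if $q \equiv 2 \pmod 3$, and is unipotent if $3 \mid q$. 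In the two torus cases the relevant values $|\chi(h)|$ are bounded by $1$ or $2$, and in the unipotent case by $(1+\sqrt{q})/2$; thus $\chi(1) + 2\,\mathrm{Re}\,\chi(h) > 0$ follows once $(q-1)/2$ exceeds these bounds, and the excluded $q = 5, 9$ are precisely where the inequalities degenerate. Your torus fallback is essentially this argument---it should be the main line, not the contingency.
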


\begin{proof}
If $q$ is even then $G$ is simple.
Thus $G$ has a diameter three subgroup of order 2,
by Example~\ref{simple_group}.
Hence we may assume that $q$ is odd.
Let $H$ be a subgroup of order $3$ in $G$.
We will show that $\Gamma(G, H)$ has diameter $3$,
using Proposition~\ref{diam3_equiv}.
By Corollary~\ref{cond_ii_satisfied},
it suffices to verify that $H$ is rich in $G$.

% For this, we first consider condition (b)~(ii).
%
% % Note that we cannot apply ... because the order 3 elements are not real
% % if q is a power of 3.
%
% Since $H$ is core-free in $G$,
% the Frobenius graph $\Gamma(G, H)$ is connected.
% Thus there is at least one character $\lambda \in \Irr(H)$ such that the
% distance between $1_H$ and $\lambda$ in $\Gamma(G,H)$ is $2$. But then the
% distance between $1_H$ and $\overline{\lambda}$ is also $2$, and we have
% $\Irr(H) = \{1_H,\lambda,\overline{\lambda}\}$. Assume that $[\lambda^G,
% \overline{\lambda}^G] = 0$,
% and consider the Steinberg character $\psi$ of $G$;
% it is real-valued.
% Thus we have $[\psi,\lambda^G] = [\psi, \overline{\lambda}^G] = 0$,
% so that $[\psi_H,\lambda] = [\psi_H, \overline{\lambda}] = 0$ holds.
% This implies that $\psi_H = \psi(1) 1_H$, and $H$ is contained in the kernel
% of $\psi$. However, this is not the case. This contradiction shows that
% $\Gamma(G, H)$ satisfies condition (b)~(ii).
%
% Now we turn to condition (b)~(i) and distinguish several cases.

In the following, we use the notation from~\cite[\S~38]{Dornhoff}.
We distinguish several cases.

\textit{Case 1:} $q \equiv 1 \pmod{3}$; in particular, $q \geq 7$.

The subgroup
$\langle a \rangle$ of order $q-1$ in $G$ contains an element $h:= a^l$ of
order $3$, and we may assume that $H = \langle h \rangle$. We need to show that,
for $\chi \in \Irr(G)$, we have $0 \neq [\chi_H,1_H]$, i.~e.,
\[
   \sigma_\chi:= \chi(1) + \chi(h) + \overline{\chi(h)} > 0.
\]
This is trivial if $\chi = 1_G$ or $\chi(h) = 0$.
If $\chi(1) \in \{q, (q+1)/2\}$ then $|\chi(h)| = 1$,
and thus $\sigma_\chi \ge 4-1-1>0$.
If $\chi(1) = q+1$ then $|\chi(h)| \leq 2$,
and thus $\sigma_\chi \geq 8-2-2 > 0$,
and the result is proved in this case.

\textit{Case 2:} $q \equiv 2 \pmod{3}$; in particular, $q \geq 11$.

In this case we may assume that $H = \langle h \rangle$ is contained in the
subgroup $\langle b \rangle$ of order $q+1$ in $G$. With notation as above,
we need to show that $\sigma_\chi > 0$,
and we may again assume that $\chi \neq 1_G$ and $\chi(h) \neq 0$.
If $\chi(1) \in \{ q, (q-1)/2 \}$ then $|\chi(h)| = 1$,
and thus $\sigma_\chi \geq 5-1-1>0$.
If $\chi(1) = q-1$ then $|\chi(h)| \leq 2$,
and thus $\sigma_\chi \geq 10-2-2>0$.
The result follows in this case as well.

\textit{Case 3:} $q \equiv 0 \pmod{3}$;
in particular, $q$ is a power of $3$, and $q \geq 27$.

In this case we may take $H = \langle h \rangle$ where $h:= c$. Again we need
to show that $\sigma_\chi > 0$, and we may assume that $\chi \neq 1_G$ and
$\chi(h) \neq 0$. If $\chi(1) = q \pm 1$ then $|\chi(h)| = 1$, and thus
$\sigma_\chi \geq 26-1-1>0$. If $\chi(1) = (q \pm 1)/2$ then $|\chi(h)| \leq
(1+\sqrt{q})/2$, and thus $\sigma_\chi \geq (q-1)/2 - 1 - \sqrt{q} > 0$.
This completes the proof of the lemma.
\end{proof}

\begin{lem}\label{typeA}
Let $G$ be a quasisimple finite group with $G/Z(G) \cong \PSL(n,q)$,
for some positive integer $n$ and some prime power $q$.
Then $G$ has a diameter three subgroup,
except when $G$ is isomorphic to one of the groups
$\SL(2,5) = 2.A_5$, $\SL(2,9) = 2.A_6$ or $6.A_6$.
\end{lem}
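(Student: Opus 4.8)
The plan is to produce inside $G$ a subgroup $K$ isomorphic either to $\SL(2,q)$ or to a nonabelian simple group of the form $\PSL(3,q)$, in either case one already known to possess a diameter three subgroup, and then to lift this property to $G$ by Proposition~\ref{reduction}: if $K \leq G$ and $\Gamma(K, H_0)$ has diameter $3$ for some nontrivial proper $H_0 < K$, then applying Proposition~\ref{reduction} with $L = H = H_0$ shows that $\Gamma(G, H_0)$ has diameter $3$. Since $G$ is a perfect central extension of $\PSL(n,q)$, it is a quotient of the universal cover; whenever $\PSL(n,q)$ has an exceptional Schur multiplier I would dispose of $G$ at once by Lemma~\ref{quasisimple_compute}. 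By Table~\ref{table:excmult} the linear groups with exceptional multiplier are exactly $\PSL(3,2)$, $\PSL(3,4)$ and $\PSL(4,2) \cong A_8$ (the last also covered by Lemma~\ref{alternating}), so in every other case the universal cover is $\SL(n,q)$ and $G \cong \SL(n,q)/Z_0$ for some $Z_0 \leq Z(\SL(n,q))$.

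First I would treat $n = 2$, where the exceptions live; here $\PSL(2,q)$ is simple precisely for $q \geq 4$. If $G \cong \PSL(2,q)$ is simple, a subgroup of order $2$ works by Example~\ref{simple_group}. If $G \cong \SL(2,q)$, then Lemma~\ref{SL2_lemma} applies as soon as $q > 3$ and $q \neq 5, 9$. What remains are the covers of $\PSL(2,5) \cong A_5$ and of $\PSL(2,9) \cong A_6$. Since $A_6$ has exceptional Schur multiplier, the groups $2.A_6 = \SL(2,9)$, $3.A_6$ and $6.A_6$ all fall under Lemma~\ref{quasisimple_compute}, which certifies that $3.A_6$ has a diameter three subgroup whereas $\SL(2,9)$ and $6.A_6$ do not; and $\SL(2,5) = 2.A_5$, being a Frobenius complement, has no nontrivial rich subgroup and hence no diameter three subgroup. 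This isolates exactly the three asserted exceptions.

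For $n \geq 3$ I would use block-diagonal embeddings. Sending $A \in \SL(m,q)$ to $\mathrm{diag}(A, I_{n-m})$ embeds $\SL(m,q)$ into $\SL(n,q)$, and for $m < n$ this copy meets $Z(\SL(n,q)) = \{\lambda I_n : \lambda^n = 1\}$ trivially, because a scalar matrix one of whose diagonal entries equals $1$ must be the identity. Consequently the image of the copy in $G \cong \SL(n,q)/Z_0$ is again isomorphic to $\SL(m,q)$. When $q > 3$ and $q \neq 5, 9$ I take $m = 2$: the embedded $\SL(2,q)$ has a diameter three subgroup by Lemma~\ref{SL2_lemma}, and Proposition~\ref{reduction} concludes. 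When $q \in \{3, 5, 9\}$ the lemma on $\SL(2,q)$ is unavailable, but $\gcd(3, q-1) = 1$ forces $\SL(3,q) = \PSL(3,q)$ to be nonabelian simple with a diameter three subgroup of order $2$ by Example~\ref{simple_group}; taking $m = 3$ then finishes the argument (for $n = 3$ one has $G \cong \PSL(3,q)$ directly). Finally, when $q = 2$ the centre of $\SL(n,q)$ is trivial, so $G \cong \PSL(n,2)$ is itself simple and Example~\ref{simple_group} applies, the exceptional $\PSL(3,2)$ and $\PSL(4,2)$ having already been removed.

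The main obstacle is the passage from $\SL(n,q)$ to an arbitrary central quotient $G$: I must guarantee that the chosen small subgroup survives isomorphically in $G$, which is exactly the triviality of its intersection with $Z(\SL(n,q))$, and I must justify the reduction $G \cong \SL(n,q)/Z_0$, i.e.\ that $\PSL(n,q)$ carries the generic Schur multiplier. The latter rests on the classification of the Schur multipliers of the simple groups of Lie type recorded in Table~\ref{table:excmult}, the finitely many exceptions being absorbed into Lemma~\ref{quasisimple_compute}. A secondary but essential point is the replacement of $\SL(2,q)$ by the simple group $\PSL(3,q)$ for the small fields $q \in \{3, 5, 9\}$: this is precisely what prevents the genuine exceptions $\SL(2,5)$ and $\SL(2,9)$ from propagating into higher rank.
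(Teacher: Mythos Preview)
Your argument is correct and follows the paper's overall approach: dispose of exceptional Schur multipliers via Lemma~\ref{quasisimple_compute}, handle $n=2$ via Lemma~\ref{SL2_lemma}, and for $n\geq 3$ locate inside $G$ a subgroup already known to have a diameter three subgroup and invoke Proposition~\ref{reduction}. One small slip: your list of linear groups with exceptional multiplier omits $\PSL(2,4)$ and $\PSL(2,9)$, both present in Table~\ref{table:excmult}; this does no harm because you treat them anyway in your $n=2$ paragraph (as covers of $A_5\cong\PSL(2,5)$ and of $A_6$), but the sentence ``in every other case the universal cover is $\SL(n,q)$'' is literally false for $(n,q)\in\{(2,4),(2,9)\}$ and should be amended.

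Organizationally you differ slightly from the paper. The paper inducts on $n$, using the subgroups $\SL(n-1,q)$ and the subfield subgroup $\SL(n,p)$ of $\SL(n,q)$, with a terse parenthetical about $M(\SL(3,3))$ and $M(\SL(3,5))$ to cover the troublesome small fields. You instead go directly to an embedded $\SL(2,q)$ (for generic $q$) or $\SL(3,q)$ (for $q\in\{3,5,9\}$), noting that $\gcd(3,q-1)=1$ makes the latter simple. Your explicit verification that the block-diagonal copy of $\SL(m,q)$ meets $Z(\SL(n,q))$ trivially, and hence survives isomorphically in any central quotient $G$, is a point the paper leaves implicit; making it explicit is an improvement.
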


\begin{proof}
Since $G$ is quasisimple, we have $n \geq 2$ and $(2,2) \neq (n,q) \neq (2,3)$.
Moreover, $G$ is isomorphic to a factor group of the Schur cover $X$ of
$S:= \PSL(n,q)$,
and $Z(X)$ is isomorphic to the Schur multiplier
$M(S):= \mathrm{H}^2(S, \mathbb{C}^\times)$.
The order of $M(S)$ is $\gcd(n,q-1)$,
with the exceptions
\[
  (n,q) \in \{(2,4),\;(2,9),\;(3,2),\;(3,4),\;(4,2)\},
\]
which have been dealt with in Lemma~\ref{quasisimple_compute}.
Thus from now on we may assume that $(n, q)$ is not one of these exceptional
values.
Then $X$ is isomorphic to $\SL(n, q)$.
By Lemma~\ref{SL2_lemma}, we may assume that $n \geq 3$,
and that $Z(G) \neq 1$.
Recall that $\SL(n, q)$ contains subgroups isomorphic to $\SL(n-1, q)$
and subgroups isomorphic to $\SL(n, p)$ where $p$ is the prime dividing $q$.
This implies the lemma for $n = 3$.
(Note that $M(\SL(3, 3))$ and $M(\SL(3, 5))$ are trivial.)
The result for $n \geq 4$ then follows by induction on $n$.
\end{proof}

\begin{lem}\label{typeC}
Let $G$ be a quasisimple finite group with $G/Z(G) \cong \PSp(2n, q)$,
for some integer $n \geq 2$ and some prime power $q$.
Then $G$ has a diameter three subgroup.
\end{lem}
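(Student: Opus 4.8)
The plan is to enumerate the quasisimple groups $G$ with $G/Z(G) \cong \PSp(2n, q)$ and to produce a diameter three subgroup in each. Since $\PSp(2n, q)$ is simple for $(n, q) \neq (2, 2)$ (when $(n,q)=(2,2)$ the group $\PSp(4,2)\cong S_6$ is not simple, so there is no quasisimple $G$ to consider), and since the Schur multiplier of $\PSp(2n, q)$ equals $\gcd(2, q-1)$ except in the cases recorded in Table~\ref{table:excmult}, the only quasisimple groups to treat are the simple group $S := \PSp(2n, q)$ itself and, when $q$ is odd, its double cover $\Sp(2n, q)$. The extra covers coming from the exceptional multipliers in Table~\ref{table:excmult} are already settled by Lemma~\ref{quasisimple_compute}, so from now on I would assume that $\PSp(2n,q)$ does not occur there.

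If $G = S$ is the simple group, then by Example~\ref{simple_group} it has a diameter three subgroup of order $2$, so it remains to treat $G = \Sp(2n, q)$ with $q$ odd. The key is that $\Sp(2n, q)$ contains a subgroup isomorphic to $\SL(2, q^n) = \Sp(2, q^n)$: viewing a $2$-dimensional symplectic space $W$ over $\F_{q^n}$ as a $2n$-dimensional symplectic space over $\F_q$ by restriction of scalars (composing the form with the trace map $\F_{q^n} \to \F_q$) embeds $\Sp(2, q^n)$ $\F_q$-linearly and symplectically into $\Sp(2n, q)$, as a proper subgroup since $n \geq 2$. Because $q$ is odd and $n \geq 2$ we have $q^n \geq 9$, with $q^n \in \{5, 9\}$ forcing $(q, n) = (3, 2)$. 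Hence, for $(q, n) \neq (3, 2)$, Lemma~\ref{SL2_lemma} shows that $\SL(2, q^n)$ has a diameter three subgroup, and Proposition~\ref{reduction} — applied with $K := \SL(2, q^n) \leq G$, with $H$ the diameter three subgroup of $K$, and with $L$ a subgroup of prime order inside $H$ — yields that $\Gamma(G, L)$ has diameter $3$.

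The sole obstacle is the value $(q, n) = (3, 2)$, i.e.\ $G = \Sp(4, 3)$: here the field-extension subgroup is $\SL(2, 9)$, which has no diameter three subgroup, while the natural subgroup $\SL(2, 3) = \Sp(2, 3)$ is solvable, so the generic argument breaks down. I would resolve this case through the exceptional isomorphism $\PSp(4, 3) \cong \PSU(4, 2)$: since $\PSU(4, 2) = {}^2A_3(2)$ has an exceptional Schur multiplier (Table~\ref{table:excmult}), its perfect central extension $\Sp(4, 3) \cong \SU(4, 2) \cong 2.\PSU(4, 2)$ is already covered by Lemma~\ref{quasisimple_compute}. This disposes of the last case and completes the argument.
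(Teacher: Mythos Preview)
Your proof is correct and follows the same overall strategy as the paper: dispose of the exceptional Schur multipliers via Lemma~\ref{quasisimple_compute}, note the simple case is handled by Example~\ref{simple_group}, and for $q$ odd find a suitable $\SL(2,-)$ subgroup inside $\Sp(2n,q)$. The differences are in the details. Where the paper reduces inductively via $\Sp(2n,q)\supset\Sp(2n-2,q)$ down to $n=2$ and then uses $\Sp(4,q)\supset\SL(2,q^2)$, you go in one step to the full field-extension subgroup $\SL(2,q^n)\subset\Sp(2n,q)$; this is a little cleaner since it avoids the induction. For the leftover case $\Sp(4,3)$, the paper invokes a $\PSL(3,3)$ subgroup, whereas you appeal to the exceptional isomorphism $\PSp(4,3)\cong\PSU(4,2)={}^2A_3(2)$ and Lemma~\ref{quasisimple_compute}; both work equally well.

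One small slip: you write ``$\Sp(4,3)\cong\SU(4,2)\cong 2.\PSU(4,2)$'', but in fact $\SU(4,2)=\PSU(4,2)$ since $\gcd(4,q+1)=\gcd(4,3)=1$, so the centre of $\SU(4,2)$ is trivial. The identification you actually need, $\Sp(4,3)\cong 2.\PSU(4,2)$, is correct and is all the argument uses, so this does not affect validity.
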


\begin{proof}
The exceptional case $(n, q) = (3, 2)$ has been done in
Lemma~\ref{quasisimple_compute}, we assume that $(n, q) \neq (3, 2)$.
Then the Schur multiplier of $\PSp(2n, q)$ is trivial when $q$ is even,
and of order $2$ when $q$ is odd.
Thus we may assume that $q$ is odd.
Then $\Sp(2n, q)$ is the Schur cover of $\PSp(2n, q)$.
Since our result is known for simple groups,
we may assume that $G \cong \Sp(2n, q)$ where $n > 1$ and $q$ is odd.
Recall that $\Sp(2n, q)$ contains subgroups isomorphic to $\Sp(2n-2,q)$.
Thus it suffices to prove the result for $n = 2$.
Since $\Sp(4, q)$ contains subgroups isomorphic to
$\Sp(2,q^2) \cong \SL(2, q^2)$, it suffices to consider the case $q = 3$.
But then we know that $G$ contains subgroups isomorphic to $\PSL(3, 3)$.
\end{proof}

\begin{lem}\label{type2A}
Let $G$ be a quasisimple finite group with $G/Z(G) \cong \PSU(n, q^2)$
for some integer $n \geq 3$ and some prime power $q$.
Then $G$ has a diameter three subgroup.
\end{lem}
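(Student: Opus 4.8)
The plan is to follow the inductive strategy of Lemmas~\ref{typeA} and~\ref{typeC}. First I would dispose of the cases where $\PSU(n, q^2)$ has an exceptional Schur multiplier, namely $(n,q) \in \{(4,2),(4,3),(6,2)\}$; these, together with all their perfect central extensions, are covered by Lemma~\ref{quasisimple_compute}. For the remaining pairs $(n,q)$ the Schur multiplier of $\PSU(n, q^2)$ is cyclic of order $\gcd(n, q+1)$, and the Schur cover is $\SU(n, q^2)$. If $Z(G) = 1$, so that $G \cong \PSU(n, q^2)$ is simple, then $G$ has a diameter three subgroup of order $2$ by Example~\ref{simple_group}; hence I may assume $Z(G) \neq 1$, and in particular $\gcd(n, q+1) > 1$. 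Writing $G = \SU(n, q^2)/N$ with $N \leq Z(\SU(n, q^2))$, it suffices by Proposition~\ref{reduction} to exhibit inside $G$ a subgroup isomorphic to a quasisimple group already known to possess a diameter three subgroup.

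The induction is on $n$, with base case $n = 3$. For the inductive step $n \geq 4$ I would use the embedding $\SU(n-1, q^2) \leq \SU(n, q^2)$ as the stabilizer of an anisotropic vector. Since a central element $\mathrm{diag}(\lambda I_{n-1}, \lambda)$ of $\SU(n, q^2)$ lies in this block subgroup only when $\lambda = 1$, one has $N \cap \SU(n-1, q^2) = 1$, so the image of $\SU(n-1, q^2)$ in $G$ is isomorphic to $\SU(n-1, q^2)$. Here $\SU(n-1, q^2)$ is quasisimple, because the only value of $q$ making $\PSU(n-1, q^2)$ nonsimple is $(n-1,q) = (3,2)$, corresponding to the already-excluded exceptional pair $(4,2)$. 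By the induction hypothesis $\SU(n-1, q^2)$ has a diameter three subgroup, and Proposition~\ref{reduction} transfers it to $G$.

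For the base case $n = 3$ I would use the exceptional isomorphism $\SU(2, q^2) \cong \SL(2, q)$ and the embedding $\SU(2, q^2) \leq \SU(3, q^2)$. As in the inductive step, a central element $\mathrm{diag}(\lambda I_2, \lambda)$ of $\SU(3, q^2)$ lies in the $\SL(2,q)$ block only when $\lambda = 1$, so $N \cap \SL(2,q) = 1$ and the image of $\SL(2,q)$ in $G$ is isomorphic to $\SL(2,q)$. When $q > 3$ and $q \notin \{5, 9\}$, Lemma~\ref{SL2_lemma} gives a diameter three subgroup of $\SL(2,q)$, and Proposition~\ref{reduction} finishes these cases. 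The pair $(n,q) = (3,2)$ is excluded because $\PSU(3, 4)$ is solvable, while every $q$ with $3 \nmid q+1$ (in particular $q \in \{3,4,7,9\}$) gives trivial Schur multiplier, so that $G$ is simple and Example~\ref{simple_group} applies.

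The one genuinely special case, which I expect to be the main obstacle, is $q = 5$: here $3 \mid q+1$, so $\SU(3, 25)$ is a proper central extension of $\PSU(3, 25)$, yet its subgroup $\SU(2, 25) \cong \SL(2,5) = 2.A_5$ has no diameter three subgroup, and the reduction above breaks down. To deal with it I would instead use the embedding $3.A_7 \leq \SU(3, 25)$ coming from the faithful $3$-dimensional unitary representation of $3.A_7$ over $\mathbb{F}_{25}$; since $3.A_7$ is a perfect central extension of $A_7$, Lemma~\ref{alternating} supplies a diameter three subgroup, which Proposition~\ref{reduction} lifts to $\SU(3, 25)$, while the simple quotient $\PSU(3, 25)$ is handled by Example~\ref{simple_group}. (Alternatively, $\SU(3, 25)$ can be checked directly from its character table, as in Lemma~\ref{quasisimple_compute}.) Throughout, the technical point to verify carefully is that these central intersections are trivial, so that the chosen quasisimple subgroups survive in the quotient $G$.
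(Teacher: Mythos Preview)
Your proof is correct and follows essentially the same inductive route as the paper: dispose of the exceptional-multiplier cases, reduce to the Schur cover $\SU(n,q^2)$, use the chain $\SU(n-1,q^2)\leq\SU(n,q^2)$ for $n\geq 4$, use $\SU(2,q^2)\cong\SL(2,q)$ and Lemma~\ref{SL2_lemma} for $n=3$, and treat $q=5$ via an $A_7$-type subgroup. Your version is in fact slightly more careful than the paper's on two points: you make explicit that the embedded $\SU(n-1,q^2)$ (resp.\ $\SL(2,q)$) meets the center of $\SU(n,q^2)$ trivially and hence survives in any central quotient $G$, and for $q=5$ you correctly identify the relevant subgroup of $\SU(3,25)$ as $3.A_7$ rather than $A_7$ (the paper writes $A_7$, but either way Lemma~\ref{alternating} applies).
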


\begin{proof}
First we consider the case $n = 3$.
Then $ q > 2$ since $\PSU(3, 2^2)$ is solvable.
Since the Schur multiplier of $\PSU(3, q^2)$ has order $\gcd(3, q+1)$,
we may assume that $q \equiv 2 \pmod{3}$
since otherwise $G$ is simple and thus we know already that the result holds.
Then $G$ is isomorphic to $\SU(3, q^2)$.
Since $\SU(3, q^2)$ contains subgroups isomorphic to
$\SU(2, q^2) \cong \SL(2, q)$,
Lemma~\ref{SL2_lemma} implies the lemma unless perhaps $q = 5$.
But $\SU(3, 5^2)$ contains subgroups isomorphic to $A_7$.

Next we consider the case $n = 4$.
The groups $\PSU(4, 2^2) \cong \PSp(4, 3)$ and $\PSU(4, 3^2)$ have been
dealt with in Lemma~\ref{quasisimple_compute}.
Thus we may assume that $q > 3$.
Then $\SU(4, q^2)$ is a Schur cover of $\PSU(4, q^2)$.
Since $\SU(4, q^2)$ contains subgroups isomorphic to $\SU(3, q^2)$,
the result follows from the previous case.

Finally, we consider the case $n \geq 5$.
The group $\PSU(6,2^2)$ has been dealt with in Lemma~\ref{quasisimple_compute}.
Thus we may assume that $(n,q) \neq (6,2)$.
Then $\SU(n, q^2)$ is a Schur cover of $\PSU(n, q^2)$,
and $\SU(n, q^2)$ has subgroups isomorphic to $\SU(n-1, q^2)$.
Thus our result follows by induction on $n$.
\end{proof}

\begin{lem}\label{typeOrth}
Let $G$ be a quasisimple finite group with $G/Z(G)$ of one of the
following types:
$\POmega(2n+1, q)$ ($n \geq 3$),
$\POmega^+(2n, q)$ ($n \geq 4$), or
$\POmega^-(2n, q)$ ($n \geq 4$), where $q$ is some prime power.
Then $G$ has a diameter three subgroup.
\end{lem}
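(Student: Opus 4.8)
The plan is to reduce the orthogonal cases to lower-rank cases that have already been settled, exactly in the spirit of the preceding lemmas (Lemma~\ref{typeA}, Lemma~\ref{typeC}, Lemma~\ref{type2A}). The unifying principle in those proofs is the following: if $G$ is quasisimple with $G/Z(G)$ simple of Lie type, then (after dealing with exceptional Schur multipliers via Lemma~\ref{quasisimple_compute}) the Schur cover is the corresponding universal Chevalley group, and this group contains a subgroup of smaller rank or smaller field of the same family; by Proposition~\ref{reduction}, it suffices that the smaller group already has a diameter three subgroup. So my first step is to invoke Lemma~\ref{quasisimple_compute} to dispose of the finitely many cases where $\POmega(2n+1,q)$, $\POmega^\pm(2n,q)$ has an exceptional Schur multiplier (these are the entries $B_3(3)$, $D_4(2)$ and similar in Table~\ref{table:excmult}). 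After removing these, the Schur multiplier has its generic order, and the Schur cover is the corresponding spin group $\mathrm{Spin}$, so we may assume $G$ is this universal version.

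Next I would set up the rank induction. For the orthogonal groups of each type there are standard subgroup embeddings that decrease the rank while staying inside the same (or an already-handled) family: $\mathrm{Spin}(2n+1,q)$ contains $\mathrm{Spin}(2n-1,q)$, and the even-dimensional spin groups $\mathrm{Spin}^\pm(2n,q)$ contain $\mathrm{Spin}(2n-1,q)$ as the stabilizer of a nonsingular vector. These embeddings pass to the quasisimple covers because the subgroup is perfect of the same Lie type. Using Proposition~\ref{reduction}, a diameter three subgroup of the smaller group yields one for $G$. Thus the whole family collapses to the base cases of the induction: $\POmega(7,q)$ for type $B$, and $\POmega^\pm(8,q)$ for types $D$. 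The even-dimensional base case $\POmega^+(8,q)$ additionally reduces to $\POmega(7,q)$ via the embedding $\mathrm{Spin}(7,q)\hookrightarrow\mathrm{Spin}^+(8,q)$, and $\POmega^-(8,q)$ contains $\POmega(7,q)$ as well, so everything funnels into type $B_3$.

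The crux is therefore the single base case $\POmega(7,q) = B_3(q)$, equivalently its spin cover $\mathrm{Spin}(7,q)$. Here the idea is to exhibit a subgroup that is already known to contain a diameter three subgroup. The exceptional isomorphism $\mathrm{Spin}(7,q)\cong C_3(q)=\Sp(6,q)$ holds only up to the center in small characteristic, but more usefully $\mathrm{Spin}(7,q)$ contains a subgroup of type $\Sp(6,q)$, or one can use the subgroup $\mathrm{SL}(2,q^3)\cong\Sp(2,q^3)$ embedded through a field-extension torus inside the symplectic/spin group; either route lands on a group covered by Lemma~\ref{typeC} or by Lemma~\ref{SL2_lemma}. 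Concretely, $\mathrm{Spin}(7,q)$ contains $\Sp(6,q)$-type subgroups (the stabilizer structure and the exceptional graph automorphism relating $B_3$ and $C_3$ supply this), and $\Sp(6,q)$ has a diameter three subgroup by Lemma~\ref{typeC}; then Proposition~\ref{reduction} finishes the base case, and with it the whole lemma.

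The main obstacle I anticipate is verifying that the claimed subgroup embeddings genuinely lift from the simple groups $\POmega$ to their quasisimple covers, i.e.\ that the smaller spin group sits inside the larger one with the centers compatibly arranged, so that Proposition~\ref{reduction} applies with $1 < L \le H < K \le G$ as required. For the generic (non-exceptional-multiplier) cases this is standard, but one must be careful at small $q$ (especially $q$ even, where some spin groups degenerate) and at the boundary ranks $n=3,4$ where the exceptional isomorphisms among low-rank orthogonal, symplectic, and linear groups come into play; those borderline possibilities are precisely the ones to double-check against Table~\ref{table:excmult} and against the already-proved Lemmas~\ref{typeA}, \ref{typeC}, and~\ref{type2A}. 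Once the embeddings are confirmed, the argument is purely a reduction and requires no new character-theoretic computation beyond what Lemma~\ref{quasisimple_compute} already supplies.
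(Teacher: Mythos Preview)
Your inductive framework, reducing $D_n$ and ${}^2D_n$ to $B_{n-1}$ and $B_n$ to $B_{n-1}$ via spin-group embeddings, is sound, but your treatment of the base case $B_3$ contains a genuine error. You claim that $\mathrm{Spin}(7,q)$ contains a subgroup isomorphic to $\Sp(6,q)$; however, for odd $q$ these two groups have exactly the same order $q^9(q^2-1)(q^4-1)(q^6-1)$ and are non-isomorphic (they are the universal Chevalley groups of types $B_3$ and $C_3$, with distinct Dynkin diagrams). Hence $\Sp(6,q)$ is not a subgroup of $\mathrm{Spin}(7,q)$ at all when $q$ is odd. The ``graph automorphism relating $B_3$ and $C_3$'' you invoke does not exist outside characteristic~$2$; duality of root systems is not the same as an isomorphism or embedding of groups. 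Your fallback, an embedding of $\SL(2,q^3)$ into $\mathrm{Spin}(7,q)$ ``through a field-extension torus'', is likewise unjustified: the construction $\Sp(2,q^3)\hookrightarrow\Sp(6,q)$ is specific to the symplectic form and has no analogue producing $\SL(2,q^3)$ inside a $7$-dimensional orthogonal group.

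The simplest repair within your scheme is to push the induction one step further: $\mathrm{Spin}(7,q)$ contains $\mathrm{Spin}(5,q)\cong\Sp(4,q)$, which is handled by Lemma~\ref{typeC}. The paper, by contrast, bypasses the induction entirely. For each of $B_n$, $D_n$, ${}^2D_n$ it removes nodes from the Dynkin diagram to exhibit a Levi subgroup of type $A_{n-1}$ (respectively $A_{n-1}$, $A_{n-2}$), whose preimage in $G$ is a perfect central extension of $\PSL(n,q)$ (respectively $\PSL(n,q)$, $\PSL(n-1,q)$), and then applies Lemma~\ref{typeA} directly. This one-step reduction to type~$A$ avoids the delicate base case altogether and makes the handling of the exceptional Schur multipliers (only $B_3(3)$ and $D_4(2)$ arise here) essentially automatic.
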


\begin{proof}
In all three cases,
Table~\ref{table:Dynkin} lists a simple group $T$ such that $S = G/Z(G)$
has a subgroup that is isomorphic to a perfect central extension of $T$;
this subgroup is obtained by removing suitable nodes
from the Dynkin diagram of $S$.
Hence the claim follows from Lemma~\ref{typeA}.
\end{proof}

\begin{table}[htpb]
\begin{center}
\begin{tabular}{|lcl|l|lcl|} \hline
$S$          &         &                    &            & $T$ & & \\ \hline
$B_n(q)$     & $\cong$ & $\POmega(2n+1, q)$ & $n \geq 3$ & $A_{n-1}(q)$ & $\cong$ & $\PSL(n, q)$ \\
$D_n(q)$     & $\cong$ & $\POmega^+(2n, q)$ & $n \geq 4$ & $A_{n-1}(q)$ & $\cong$ & $\PSL(n, q)$ \\
${}^2D_n(q)$ & $\cong$ & $\POmega^-(2n, q)$ & $n \geq 4$ & $A_{n-2}(q)$ & $\cong$ & $\PSL(n-1, q)$ \\ \hline
$E_6(q)$     &         &                    &            & $A_5(q)$ & $\cong$ & $\PSL(6, q)$ \\
${}^2E_6(q)$ &         &                    &            & ${}^2A_5(q)$ & $\cong$ & $\PSU(6, q^2)$ \\
$E_7(q)$     &         &                    &            & $E_6(q)$ & & \\ \hline
\end{tabular}
\caption{Subgroups obtained from Dynkin diagrams}
\label{table:Dynkin}
\end{center}
\end{table}

\begin{proof} (of Theorem~\ref{thm_quasisimple})
According to the classification of the nonabelian finite simple groups,
each such group is either alternating (see Lemma~\ref{alternating}),
sporadic simple (see Lemma~\ref{quasisimple_compute}),
or a group of Lie type of the type $A$ (see Lemma~\ref{SL2_lemma} and
Lemma~\ref{typeA}), $C$ (see Lemma~\ref{typeC}),
${}^2A$ (see Lemma~\ref{type2A}),
$B$, $D$, ${}^2D$ (see Lemma~\ref{typeOrth}),
or of exceptional type.

Thus it remains to show the claim for the latter groups.
Table~\ref{table:Dynkin} (cf.~\cite[Table 5]{CCN85})
lists the series of those simple groups $S$ with nontrivial Schur multiplier
(omitting the cases that were dealt with in
Lemma~\ref{quasisimple_compute})
and a simple factor $T$ of a quasisimple subgroup of $S$,
where we know already
that any perfect central extension of $T$ has a diameter three subgroup.
The groups $T$ can be read off from the Dynkin diagrams of the groups $S$.

(The Schur multipliers of ${}^2B_2(q)$,
${}^3D_4(q)$, $G_2(q)$, ${}^2G_2(q)$, $F_4(q)$, ${}^2F_4(q)$,
and $E_8(q)$ are trivial, apart from the exceptions listed in
Lemma~\ref{quasisimple_compute}.)
\end{proof}

%%%%%%%%%%%%%%%%%%%%%%%%%%%%%%%%%%%%%%%%%%%%%%%%%%%%%%%%%%%%%%%%%%%%%%%%%%%%%
\section{Minimal groups with diameter three subgroups}%
\label{sect:minimal}

By Proposition~\ref{reduction},
we can ask for the smallest subgroups of a given group
that have a diameter three subgroup.
In this section, we study groups that are minimal w.~r.~t.~inclusion
in this respect.

Example~\ref{expl_frob_sub} states that $\AGL(1, p^2)$ has a unique
subgroup that is minimal in this sense, whose index in $\AGL(1, p^2)$
is the odd part of $p-1$.
Proposition~\ref{simplegroup} states that simple groups are never minimal.

Example~\ref{expl_frob_sub} also implies that the number of prime divisors
of the order of minimal groups that contain a diameter three subgroup
is not bounded.
Namely, for any natural number $t$ we may take the product $c$ of $t$
pairwise different primes, and choose a prime $p = k c -1$,
for some natural number $k$.
(Infinitely many such primes exist by Dirichlet's theorem.)
Then Example~\ref{expl_frob_sub} yields a group $G$ of order $p^2 \cdot d$,
where $d$ is a multiple of $c$, such that $G$ contains a
diameter three subgroup and is minimal with this property.

Note that the order of minimal non-nilpotent groups is divisible
by exactly two different primes.
Minimal non-supersolvable groups are solvable,
and Proposition~\ref{subsect:minSL2p} below shows that
minimal groups that have a diameter three subgroup need not be solvable.
See~\cite{Doerk} for properties of minimal non-nilpotent and
minimal non-supersolvable groups.

\begin{rem}
Proposition~\ref{lem:factor} states that factoring out certain normal
subgroups of a group with a diameter three subgroup yields again groups
with a diameter three subgroup.
Thus we could define minimality by going down to subgroups and by factoring
out normal subgroups if possible.
However, this would yield strange results.
For example, we will see in Proposition~\ref{SL2pminimal}
that the group $\SL(2, 7)$ is minimal w.r.t.~inclusion;
if we allow to take factors then we get $\PSL(2, 7)$,
which is \emph{not} minimal because its $A_4$ type subgroups
have diameter three subgroups.
\end{rem}

%%%%%%%%%%%%%%%%%%%%%%%%%%%%%%%%%%%%%%%%%%%%%%%%%%%%%%%%%%%%%%%%%%%%%%%%%%%%%
\subsection{Series of minimal examples}\label{subsect:minseries}

\begin{prop}\label{prop:series_minimal}
Let $G$ be a semidirect product of an elementary abelian $p$-group $E$
and a cyclic group $C$ of order $q$, a power of a prime $l \not= p$.
Assume that $C$ acts irreducibly on $E$
and that $G$ is minimal with the property that it has a diameter three
subgroup.

Let $G_i$ be a subgroup of index $q$ in the direct product
$G \times C_i$, where $C_i$ is a cyclic group of order $q \cdot l^i$,
such that the Sylow $l$-subgroup of $G_i$ is cyclic
and acts irreducibly on $E$.
Then $G_i$ has a diameter three subgroup and is minimal with this property.
\end{prop}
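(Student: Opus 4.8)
The plan is to realise $G_i$ as a group \emph{isoclinic} to $G$ and then to transport both the existence of a diameter three subgroup and the minimality across the isoclinism by means of Proposition~\ref{prop:isoclinic}. First I would analyse the structure of $G_i$. Since $p \neq l$, the subgroup $E$ is a normal Sylow $p$-subgroup of $G \times C_i$, and as $[G \times C_i : G_i] = q$ is an $l$-power, $G_i$ contains $E$; its cyclic Sylow $l$-subgroup $D_i$ (of order $q l^i$) is then a complement, so $G_i = E \rtimes D_i$. Because the direct factor $C_i$ centralises $E$, the group $D_i$ acts on $E$ through its projection to $C$; for the subgroups $G_i$ under consideration this projection is onto (the prototype is the ``diagonal'' subgroup $\langle E,(c,d)\rangle$, where $C=\langle c\rangle$ and $C_i=\langle d\rangle$), so that the image of $D_i$ in $\mathrm{Aut}(E)$ is the full image $\bar C$ of $C$. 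Writing $Z := \ker(D_i \to \bar C)$, a short computation of centre and derived subgroup gives $Z(G_i) = Z$, $G_i' = [E, D_i] = E = G'$, and $G_i / Z(G_i) \cong E \rtimes \bar C \cong G / Z(G)$, with the commutator maps on both sides induced by the action of $\bar C$ on $E$. Hence $G_i$ is isoclinic to $G$. Since $G$, being minimal with the property, in particular \emph{has} a diameter three subgroup, Proposition~\ref{prop:isoclinic} immediately yields a diameter three subgroup of $G_i$, settling the first assertion.

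For minimality I would argue by contradiction: assuming a proper subgroup $U < G_i$ has a diameter three subgroup, I will manufacture a \emph{proper} subgroup of $G$ with a diameter three subgroup, contradicting the minimality of $G$. First I classify the subgroups of $G_i$: since $E \trianglelefteq G_i$, the intersection $M := U \cap E$ is normal in $U$ and $U/M$ embeds in the cyclic $l$-group $D_i$, so Schur--Zassenhaus gives $U = M \rtimes A$ with $A$ a cyclic $l$-group, and after conjugation $A \le D_i$ and $M \le E$ an $A$-invariant subgroup. If $A = 1$ then $U \le E$ is abelian, hence supersolvable, and has no diameter three subgroup by Corollary~\ref{cor:supersolvable}; so I may assume $A \neq 1$.

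Now I attach a subgroup of $G$ to $U$. Let $\bar A \le \bar C$ be the image of $A$ in $\mathrm{Aut}(E)$ and let $\hat A \le C$ be its preimage; then $M$ is $\hat A$-invariant and $W := M \rtimes \hat A$ is a subgroup of $G = E \rtimes C$. Exactly the centre/derived-subgroup computation from the first paragraph shows that $U$ and $W$ are both isoclinic to the faithful base $M \rtimes B$, where $B$ is the image of $A$ in the automorphism group of $M$; hence $U$ and $W$ are isoclinic, and Proposition~\ref{prop:isoclinic} transports the diameter three subgroup from $U$ to $W$. It then remains to check that $W$ is a \emph{proper} subgroup of $G$. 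One has $W = G$ only if $M = E$ and $\bar A = \bar C$; but $\bar A = \bar C$ means that $A$ surjects onto $D_i / Z(G_i) \cong \bar C$, which in the cyclic group $D_i$ forces $A Z = D_i$ and hence $A = D_i$, giving $U = E \rtimes D_i = G_i$ against the properness of $U$. Thus $W < G$ carries a diameter three subgroup, contradicting the minimality of $G$, and $G_i$ is minimal as claimed.

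The main obstacle is precisely this interaction between the cyclic top group and the isoclinism type: I must ensure that enlarging $C$ to $D_i$ never produces, as a proper subgroup of $G_i$, a group isoclinic to $G$ itself, and this is exactly where the hypothesis that $D_i$ is \emph{cyclic} is used decisively---a proper subgroup of a cyclic $l$-group cannot surject onto the quotient $\bar C$. I would also be careful at the very first step to record that the image of $D_i$ in $\mathrm{Aut}(E)$ really is $\bar C$ (so that $G_i$ is isoclinic to $G$ and not to a smaller semidirect product); this is what makes the irreducibility hypothesis on the Sylow $l$-subgroup the right one. The accompanying isoclinism verifications---the two centre and derived-subgroup computations, and the identification of the faithful base group---are routine, and I would not expand them in detail.
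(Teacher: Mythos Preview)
Your argument is correct. For the existence half you do essentially what the paper does: establish that $G$ and $G_i$ are isoclinic (the paper phrases this as ``$G_i$ embeds into $G\times C_i$ by enlarging the centre'' and then invokes Lemma~\ref{lem:extendcentre}; you compute centres and derived subgroups directly and invoke Proposition~\ref{prop:isoclinic}).

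For minimality the routes diverge. The paper checks only the \emph{maximal} subgroups $M$ of $G_i$: solvability makes $[G_i:M]$ a prime power; if the prime is $l$ then $M$ contains $E$, has index $l$, and is isoclinic to the index-$l$ subgroup $U$ of $G$, which has no diameter three subgroup by minimality of $G$; if the prime is $p$ then irreducibility of the $D_i$-action on $E$ forces $M$ to be a (cyclic) Sylow $l$-subgroup. Your approach instead treats an arbitrary proper $U=M\rtimes A<G_i$ uniformly, building an isoclinic companion $W=M\rtimes\hat A\le G$ and showing $W\neq G$ via the observation that a proper subgroup of the cyclic $l$-group $D_i$ cannot surject onto $\bar C$. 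The paper's case split is shorter; your construction is more conceptual and makes explicit why cyclicity of the Sylow $l$-subgroup is the decisive hypothesis. The point you flag at the end---that one must know the image of $D_i$ in $\mathrm{Aut}(E)$ is the full $\bar C$---is precisely what the paper's ``enlarging the centre'' sentence is using without comment, so your caution there is well placed.
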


\begin{proof}
The group $G_i$ embeds into $G \times C_i$ by enlarging the centre,
so $G$ and $G_i$ are isoclinic for all $i$,
see the proof of Proposition~\ref{prop:isoclinic}.
Hence $G_i$ has a diameter three subgroup
by Lemma~\ref{lem:extendcentre}.

In order to show the minimality of $G_i$,
we show that no maximal subgroup of $G_i$ has a diameter three subgroup.
Let $M$ be a maximal subgroup of $G_i$.
Since $G_i$ is solvable, $[G_i:M]$ is a prime power.

If this prime is $l$ then $M$ contains $E$,
thus $M$ has index $l$ in $G_i$, and $M$ can be embedded into
$U \times C_{i-1}$, where $U$ is the subgroup of index $l$ in $G$.
Thus $M$ is isoclinic with $U$ and hence has no diameter three subgroup,
by the minimality of $G$.

If this prime is $p$ then $M$ is a Sylow $l$-subgroup of $G_i$,
by the irreducibility of the action on $E$,
thus $M$ has no diameter three subgroup.
\end{proof}

\begin{example}\label{series_minimal}
Applying Proposition~\ref{prop:series_minimal} to 
the Frobenius group $G$ of order $2^n (2^n - 1)$
from Example~\ref{expl_frob}, where $p = 2^n - 1$ is a prime,
yields minimal examples $2^2:3, 2^2:9, 2^2:27, \ldots$,
$2^3:7, 2^3:49, 2^3:343, \ldots$,
$2^5:31, 2^5:961, \ldots$,
$2^7:127, 2^7:16129$.

Starting from $G = 3^2:8$, we get $3^2:16, 3^2:32, \ldots$,
and $G = 2^4:5$ yields $2^4:25, 2^4:25, \ldots$.
See Table~\ref{table:isoclasses} for more examples.
\end{example}

%%%%%%%%%%%%%%%%%%%%%%%%%%%%%%%%%%%%%%%%%%%%%%%%%%%%%%%%%%%%%%%%%%%%%%%%%%%%%
\subsection{The groups $\SL(2, p)$}\label{subsect:minSL2p}

\begin{prop}\label{SL2pminimal}
Let $p$ be a prime.
Then $\SL(2, p)$ is a minimal group (w.~r.~t.~inclusion)
that contains a diameter three subgroup if and only if $p > 5$.
\end{prop}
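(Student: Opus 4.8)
The plan is to reformulate minimality as a statement about maximal subgroups and then to run through the (classically known) maximal subgroups of $\SL(2,p)$ one type at a time. First I would record the key reduction: by Proposition~\ref{reduction} the property of containing a diameter three subgroup passes from a subgroup to every overgroup, so if some proper subgroup $K < \SL(2,p)$ had a diameter three subgroup, then so would any maximal subgroup $M$ with $K \leq M$. Hence $\SL(2,p)$ is a minimal group with a diameter three subgroup if and only if it contains a diameter three subgroup while no maximal subgroup of it does.

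I would dispose of the three small cases first. For $p = 2$ the group $\SL(2,2) \cong S_3$ is supersolvable, so by Corollary~\ref{cor:supersolvable} it has no diameter three subgroup at all. For $p = 3$ and $p = 5$ the groups $\SL(2,3) = 2.A_4$ and $\SL(2,5) = 2.A_5$ are Frobenius complements (their Sylow subgroups are cyclic or generalized quaternion), so by~\cite[(25.5)]{Feit} they have no nontrivial rich subgroup and hence no diameter three subgroup; this recovers the assertion of the remark following Example~\ref{simple_group}. In each case $\SL(2,p)$ fails even to contain a diameter three subgroup, so it is certainly not minimal with that property. For $p > 5$, by contrast, existence is immediate from Lemma~\ref{SL2_lemma}, which supplies a diameter three subgroup of order $3$.

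The remaining work is to show, for $p > 5$, that no maximal subgroup of $\SL(2,p)$ contains a diameter three subgroup. I would invoke Dickson's classification of the subgroups of $\PSL(2,p)$, cf.~\cite[Satz~II.8.27]{HuppertI}. Since $\SL(2,p)$ is perfect for $p \geq 5$, it has no subgroup of index $2$, so every maximal subgroup contains the central involution $-I$ and is therefore the full preimage of a maximal subgroup of $\PSL(2,p)$. Consequently every maximal subgroup of $\SL(2,p)$ is of one of the following types: the Borel subgroup, of order $p(p-1)$; a torus normalizer, of order $2(p-1)$ or $2(p+1)$; or one of the preimages $\SL(2,3) = 2.A_4$, $2.S_4$, $\SL(2,5) = 2.A_5$ of the subgroups $A_4$, $S_4$, $A_5$ of $\PSL(2,p)$.

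Finally I would check each type. The Borel subgroup is $C_p \rtimes C_{p-1}$, and each torus normalizer has a cyclic normal subgroup of index $2$; all of these are metacyclic, hence supersolvable, so Corollary~\ref{cor:supersolvable} applies and none has a diameter three subgroup. The three exceptional preimages $\SL(2,3)$, $2.S_4$ and $\SL(2,5)$ are Frobenius complements, so~\cite[(25.5)]{Feit} again forbids nontrivial rich subgroups. Thus no maximal subgroup contains a diameter three subgroup, and $\SL(2,p)$ is minimal, completing the equivalence. The step requiring the most care is the structural input: the completeness of the maximal subgroup list and the identification of the binary octahedral group $2.S_4$ as a Frobenius complement. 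Once these facts are in place, the verification reduces entirely to the two ready-made tools, Corollary~\ref{cor:supersolvable} and~\cite[(25.5)]{Feit}.
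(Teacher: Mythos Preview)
Your argument is correct and follows essentially the same route as the paper: dispose of $p\le 5$, invoke Lemma~\ref{SL2_lemma} for existence when $p>5$, and then eliminate the proper subgroups via Dickson's list together with Corollary~\ref{cor:supersolvable} and the Frobenius-complement criterion. Two small differences are worth noting. First, you reduce to maximal subgroups at the outset, whereas the paper runs through all eight subgroup types of $\PSL(2,p)$ directly; your reduction is legitimate and slightly tidier. Second, for the binary octahedral group $2.S_4$ the paper does not invoke the Frobenius-complement argument but instead observes that a diameter three subgroup would have to have order~$3$ (the unique involution being central), pushes it down to $S_4$ via Lemma~\ref{lem:factor}, and derives a contradiction from Corollary~\ref{cor_easy}. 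Your treatment---recognising $2.S_4$ as a Frobenius complement (it embeds in $\SL(2,7)$ and acts fixed-point-freely on $\F_7^2$) and applying~\cite[(25.5)]{Feit} uniformly to all three binary polyhedral groups---is a clean shortcut, at the cost of importing one extra structural fact.
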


\begin{proof}
The group $\SL(2, 2) \cong S_3$ is supersolvable,
and $\SL(2, 3)$ and $\SL(2, 5)$ are Frobenius complements,
thus we know that these groups do not have diameter three subgroups.

Now assume $p > 5$.
Lemma~\ref{SL2_lemma} shows that $G = \SL(2, p)$ has
a diameter three subgroup.
We show that the proper subgroups of $G$ do not have such a subgroup.
Let $\pi\colon G \rightarrow G/Z(G) \cong \PSL(2, p)$ be the natural
epimorphism.
The subgroups of the simple group $\pi(G)$
are listed in \cite[Satz~II.8.27]{HuppertI},
there are eight types of subgroups.
The types (1)--(3) and (7) are cyclic or metacyclic,
in particular supersolvable,
hence also their preimages under $\pi$ are supersolvable
and thus do not have a diameter three subgroup,
by Corollary~\ref{cor:supersolvable}.
The preimages of subgroups of the types (4)--(6) are isomorphic to
$\SL(2, 3)$, $\SL(2, 5)$, or the unique group of order $48$
with exactly one involution s.~t. the factor modulo the centre
is isomorphic to $S_4$;
the former two groups have been dealt with above,
and if the latter had a diameter three subgroup then
it would be of order divisible by $3$, which would imply a
diameter three subgroup of order $3$ in $S_4$,
by Proposition~\ref{lem:factor},
which contradicts Corollary~\ref{cor_easy}.
Finally, subgroups of the type (8) do not occur because $p$ is prime.
\end{proof}

%%%%%%%%%%%%%%%%%%%%%%%%%%%%%%%%%%%%%%%%%%%%%%%%%%%%%%%%%%%%%%%%%%%%%%%%%%%%%
\subsection{Small groups}\label{subsect:minsmall}

Using the library of small groups~\cite{SmallGroup},
we computed the groups $G$ of order up to $2\,000$,
up to isomorphism, that contain nontrivial rich subgroups $H$.
There are exactly $52\,239$ such isomorphism types,
$33\,523$ of them have order $1\,536$.

If we consider only those groups $G$ such that no proper subgroup of $G$
has this property,
we get exactly $163$ such groups, up to isomorphism.
These groups lie in $40$ isoclinism classes.

(Fortunately, enough information is available such that one need not
really run over all isomorphism types of groups.
For example, most of the $408\,641\,062$ groups of order $1\,536$
are supersolvable and hence need not be checked.
In the end, no group of this order turned out to be minimal.)

Table~\ref{table:isoclasses} shows one representative of each
isoclinism class.
The first column contains the number $k$ of isomorphism types of those groups
in the isoclinism class that are among the $163$ minimal examples.
The second and third column list the values $|G|$ and $i$ such that the
group $G$ is the $i$-th group of its order, according to the numbering
in~\cite{SmallGroup}; the values of Frobenius groups are shown in boldface.

Examples of isoclinic minimal examples are described in
Section~\ref{subsect:minseries}.

Diameter three subgroups $H$ in groups of order at most $2\,000$
have order at most $16$.

There are exactly $11$ isomorphism types of groups of order at most $2\,000$
which have nontrivial rich subgroups that are not diameter three subgroups.
Among these groups are the Frobenius groups $3^3\spl 13$ and $2^6\spl 21$
of the orders $351$ and $1\,344$, respectively,
and direct products $A \times 3^3\spl 13$ where $A$ is an abelian group
of order at most $5$.
% The remaining four groups have the structures
% $(3^3\spl 13)\spl 3$, $2^6\spl (9\spl 3)$,
% $2^6\spl (2 \times 14)$, and $4^3\spl (2 \times 14)$.

\begin{table}[htpb]
\small
\[
   \begin{array}{|r|r|r|l|} \hline
      k & |G| & i & G \\
    \hline
      5 & \textbf{12} & \textbf{3} & 2^2\spl 3 \cong A_4 \\
      2 & \textbf{56} & \textbf{11} & 2^3\spl 7 \\
      5 & \textbf{72} & \textbf{39} & 3^2\spl 8 \\
      9 & \textbf{72} & \textbf{41} & 3^2\spl Q_8 \\
      3 & \textbf{80} & \textbf{49} & 2^4\spl 5 \\
      2 & 160 & 199 & 2^{1+4}_-\spl 5 \\
      4 & 216 & 86 & 3^{1+2}_+\spl 8 \\
      6 & 216 & 88 & 3^{1+2}_+\spl Q_8 \\
     18 & 288 & 393 & 3^2\spl (8\spl 4) \\
      1 & 336 & 114 & \SL(2,7) \\
      1 & \textbf{351} & \textbf{12} & 3^3\spl 13 \\
      6 & 576 & 1966 & 3^2\spl (16\spl 4) \\
      9 & 576 & 1967 & 3^2\spl (16\spl 4) \\
      5 & 576 & 1973 & 3^2\spl ((8 \times 2)\spl 4) \\
      9 & 576 & 1976 & 3^2\spl ((8\spl 2)\spl 4) \\
      2 & \textbf{600} & \textbf{148} & 5^2\spl (3\spl 8) \\
      3 & \textbf{600} & \textbf{149} & 5^2\spl 24 \\
      2 & \textbf{600} & \textbf{150} & 5^2\spl \SL(2,3) \\
      2 & 648 & 641 & 3^3\spl \SL(2,3) \\
      2 & \textbf{784} & \textbf{160} & 7^2\spl 16 \\
    \hline
   \end{array}
   \ \ \ 
   \begin{array}{|r|r|r|l|} \hline
      k & |G| & i & G \\
    \hline
      3 & \textbf{784} & \textbf{162} & 7^2\spl Q_{16} \\
      9 & 864 & 676 & 3^{1+2}_+\spl (8\spl 4) \\
      1 & \textbf{992} & \textbf{194} & 2^5\spl 31 \\
     10 & 1152 & 4900 & 3^2\spl (((4 \times 2)\spl 4)\spl 4) \\
     10 & 1152 & 5070 & 3^2\spl ((2 \times (4\spl 4))\spl 4) \\
      6 & 1152 & 5232 & 3^2\spl ((2 \times ((4 \times 2)\spl 2))\spl 4) \\
      4 & 1152 & 6492 & 3^2\spl ((8\spl 4)\spl 4) \\
      3 & 1152 & 6577 & 3^2\spl (((2^3)\spl 4)\spl 4) \\
      3 & 1152 & 6619 & 3^2\spl (((2^3)\spl 4)\spl 4) \\
      3 & 1152 & 7054 & 3^2\spl (32\spl 4) \\
      1 & 1152 & 7092 & 3^2\spl (32\spl 4) \\
      1 & 1320 & 13 & \SL(2,11) \\
      1 & \textbf{1620} & \textbf{419} & 3^4\spl (5\spl 4) \\
      1 & \textbf{1620} & \textbf{420} & 3^4\spl 20 \\
      1 & 1620 & 421 & 3^4\spl (5\spl 4) \\
      1 & 1728 & 2787 & 3^{1+2}_+\spl (16\spl 4) \\
      3 & 1728 & 2788 & 3^{1+2}_+\spl (16\spl 4) \\
      2 & 1728 & 2794 & 3^{1+2}_+\spl (4^2\spl 4) \\
      3 & 1728 & 2797 & 3^{1+2}_+\spl ((8\spl 2)\spl 4) \\
      1 & 1800 & 270 & 5^2\spl (9\spl 8) \\
    \hline
   \end{array}
\]
\caption{Small groups that are minimal with a nontrivial rich subgroup}
\label{table:isoclasses}
\end{table}

%%%%%%%%%%%%%%%%%%%%%%%%%%%%%%%%%%%%%%%%%%%%%%%%%%%%%%%%%%%%%%%%%%%%%%%%%%%%%
\section{Diameter and Depth}\label{sect:depth}

A notion of depth can be defined for subrings of a ring (cf.~\cite{BKK}).
Here we are only interested in complex group algebras of finite groups and
their subgroups. In this situation the depth can be computed in terms of the
Frobenius matrix. More precisely, let $H$ be a proper subgroup of a finite
group $G$, and set $M := F(G, H)$.
Then we have
\[
   S := MM^\top = ([\varphi^G,\psi^G])_{\varphi,\psi \in \Irr(H)}.
\]
For a positive integer $m$, one has that $H$ is of \emph{depth} $n=2m+1$ in $G$
if and only if $S^{m+1} \leq qS^m$ for some $q>0$, and that $H$ is of
\emph{depth} $n=2m$ in $G$ if and only if $S^mM \leq qS^{m-1}M$ for some $q>0$.
Here the inequality $A \leq B$ between real matrices $A = (a_{ij})$ and $B =
(b_{ij})$ of the same format is defined by $a_{ij} \leq b_{ij}$ for all $i,j$.

It is known that depth $n$ always implies depth $n+1$.
Thus the \emph{minimal depth} $d(H, G)$ is of particular interest.
It is also known that $H$ is of depth $2$ in $G$ if and only if $H$ is
a normal subgroup of $G$.
Several papers have investigated subgroups of depth $3$ (cf.~\cite{B,BK}).
Here we point out connections to the diameter of $\Gamma(G, H)$.

\begin{prop}
Let $H$ be a nontrivial core-free subgroup of a finite group $G$.
Then the following assertions hold:

(i) If $\Gamma(G, H)$ has diameter $3$ then $[\varphi^G, \psi^G] > 0$
    for all $\varphi, \psi \in \Irr(H)$.

(ii) If $[\varphi^G, \psi^G] > 0$ for all $\varphi, \psi \in \Irr(H)$
     then $H$ is of depth $3$ in $G$.

(iii) If $H$ is of depth $3$ in $G$ then the diameter of $\Gamma(G, H)$
      is $3$ or $4$.
\end{prop}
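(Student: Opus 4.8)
The plan is to treat the three parts in turn, noting that (i) and (ii) are essentially immediate and that the real content lies in (iii). First observe that, since $H$ is nontrivial and core-free, it is proper in $G$ (otherwise $\Core_G(H) = G \neq 1$) and $\Gamma(G,H)$ is connected by Proposition~\ref{corefree}. For part~(i) I would simply invoke Proposition~\ref{diam3_equiv}: if $\Gamma(G,H)$ has diameter $3$ then condition~(b)(ii) holds, which is exactly the assertion that $[\varphi^G,\psi^G]\neq 0$ for all $\varphi,\psi\in\Irr(H)$; as these inner products are nonnegative integers, $\neq 0$ is the same as $>0$.

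For part~(ii) I would write $S = MM^\top = ([\varphi^G,\psi^G])_{\varphi,\psi\in\Irr(H)}$ as in the definition of depth, so that the hypothesis says precisely that every entry of $S$ is strictly positive. Depth $3$ (the case $m=1$) means $S^2 \leq qS$ entrywise for some $q>0$. Since $S$ is a fixed finite matrix with all entries positive and $S^2$ has finite nonnegative entries, I would take $q := \max_{\varphi,\psi} (S^2)_{\varphi\psi}/S_{\varphi\psi}$; this is a maximum over finitely many positive denominators, hence a well-defined positive real, and by construction $S^2 \leq qS$. Thus $H$ is of depth $3$ in $G$.

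Part~(iii) is where I expect the work to lie. Depth $3$ gives a $q>0$ with $S^2 \leq qS$ entrywise, and I would exploit this to show that the relation on $\Irr(H)$ defined by $\varphi\approx\psi$ whenever $[\varphi^G,\psi^G]>0$ (that is, $S_{\varphi\psi}>0$) is an equivalence relation. Reflexivity holds because $S_{\varphi\varphi} = [\varphi^G,\varphi^G]\geq 1$, and symmetry because $S$ is symmetric. For transitivity, if $S_{\varphi\tau}>0$ and $S_{\tau\psi}>0$ then $(S^2)_{\varphi\psi}\geq S_{\varphi\tau}S_{\tau\psi}>0$, and the inequality $S^2 \leq qS$ together with nonnegativity of $S$ forces $S_{\varphi\psi}>0$. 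So $\approx$ is an equivalence relation. Now, using that $\Gamma(G,H)$ is connected, I would argue that a path between two vertices $\varphi,\psi\in\Irr(H)$ passes through a subsequence of $\Irr(H)$-vertices in which each consecutive pair shares a common $\Irr(G)$-neighbour and is therefore $\approx$-related; by transitivity $\varphi\approx\psi$. Hence all of $\Irr(H)$ lies in a single $\approx$-class, i.e.\ condition~(b)(ii) of Proposition~\ref{diam3_equiv} holds.

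Finally I would bound the diameter. Every $\chi\in\Irr(G)$ is adjacent in $\Gamma(G,H)$ to some constituent of $\chi_H$, hence to some vertex of $\Irr(H)$, so it lies at distance $1$ from $\Irr(H)$. Since condition~(b)(ii) puts any two $\Irr(H)$-vertices at distance at most $2$, routing any walk through intermediate $\Irr(H)$-vertices shows that any two vertices of $\Gamma(G,H)$ are at distance at most $1+2+1=4$; in particular the diameter is finite and at most $4$. As $H$ is nontrivial, the remarks of Section~\ref{sect:intro} give diameter at least $3$, so the diameter is $3$ or $4$. The main obstacle is the key observation in part~(iii) that the depth-$3$ matrix inequality $S^2\leq qS$ is exactly transitivity of the common-constituent relation on $\Irr(H)$; once this is combined with connectedness to collapse $\Irr(H)$ into a single class, the remaining distance estimates are routine.
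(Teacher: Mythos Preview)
Your proof is correct. Parts~(i) and~(ii) match the paper's approach exactly; you are simply more explicit in~(ii), writing out the choice of $q$ where the paper just says the claim ``follows from the above definition of depth.''

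Part~(iii) differs in a useful way. The paper invokes \cite[Theorem~3.6]{BKK} as a black box to conclude that, under depth~$3$ and connectedness, any two distinct $\varphi,\psi\in\Irr(H)$ have distance~$2$ in $\Gamma(G,H)$. You instead prove this directly: the inequality $S^2\leq qS$ is exactly the statement that the relation ``$\varphi^G$ and $\psi^G$ share a constituent'' is transitive, and then connectedness collapses $\Irr(H)$ to a single class. This is the content of the cited theorem in the special case needed here, so your argument is self-contained where the paper's is not. The remaining distance estimates (at most $1+2+1=4$, at least $3$ since $H\neq 1$) are identical in both proofs.
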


\begin{proof}
(i) is an immediate consequence of condition (b)~(ii) in
 Proposition~\ref{diam3_equiv}.

%(ii) follows from~\cite[Corollary 2.3]{BK}.
(ii) follows from the above definition of depth.

(iii) Suppose that $H$ is of depth $3$ in $G$.
 Since $\Gamma(G, H)$ is connected, \cite[Theorem~3.6]{BKK} implies
 that the distance between any two distinct $\varphi, \psi \in \Irr(H)$ is $2$.
 Thus the distance between any $\varphi \in \Irr(H)$ and any $\chi \in \Irr(G)$
 is at most $3$,
 and the distance between any two distinct $\chi, \eta \in \Irr(G)$
 is at most $4$.
 Thus $\Gamma(G, H)$ has diameter $3$ or $4$.
\end{proof}

Thus diameter $3$ implies depth 3,
and conversely depth 3 implies diameter $3$ or $4$.
Note that it can happen that the depth is $3$ whereas the diameter is $4$,
as the example $S_2 < S_3$ shows.
Note also that the depth is defined also for subgroups with nontrivial core;
for example, the depth of the Sylow $2$-subgroup $H$ in the dihedral group
$G$ of order $12$ is $3$, but $\Gamma(G, H)$ consists of two connected
components, each a path of length $4$.

% (For core-free subgroups $H$ of even depth in $G$,
% the diameter of $\Gamma(G, H)$ is always equal to the depth of $H$ in $G$.)

%%%%%%%%%%%%%%%%%%%%%%%%%%%%%%%%%%%%%%%%%%%%%%%%%%%%%%%%%%%%%%%%%%%%%%%%%%%%%

\begin{center}
 {\bf Acknowledgements}
\end{center}

\noindent
Work on this paper started when the fourth author visited Budapest;
he is grateful for all the hospitality and inspiration received there.
The first author gratefully acknowledges support by the German Research
Foundation (DFG)
-- Project-ID 286237555 --
within the SFB-TRR 195 {\it Symbolic Tools in Mathematics
and their Applications}.
The research in this paper was also supported by the NKFI-Grant
No.~138596.

\bibliographystyle{plain}
\bibliography{diam3}

\vspace*{2cm}

\noindent
T. Breuer,
Lehrstuhl f\"ur Algebra und Zahlentheorie,
RWTH Aachen University,
Pontdriesch 14-16,
D-52062 Aachen, Germany, \\
e-mail: \texttt{sam@math.rwth-aachen.de} \\
\\
L. H\'ethelyi,
Department of Algebra,
Budapest University of Technology and Economics,
H-1111 Budapest,
M\H uegyetem rkp. 3-9,
Hungary,\\
e-mail: \texttt{lhethelyi@gmail.com} \\
\\
B. K\"ulshammer,
Institut f\"ur Mathematik,
Friedrich-Schiller-Universit\"at,
D-07737 Jena, Germany, \\
e-mail: \texttt{kuelshammer@uni-jena.de} \\

\end{document}